\documentclass[12pt]{article}
\newcommand{\nihil}[1]{}
\nihil{
	\evensidemargin 0.05in \oddsidemargin 0.05in \textwidth 6.4in
	\topmargin - 0.7 in \overfullrule = 0pt
	\textheight 8.8 in
}

\usepackage[normalem]{ulem}
\usepackage[dvipsnames]{xcolor}
\usepackage{graphicx}
\usepackage{caption}
\usepackage{subcaption}
\usepackage{tcolorbox}
\usepackage{import} 
\usepackage{amsfonts}
\usepackage{authblk}
\usepackage{dsfont}
\usepackage{amsmath}
\usepackage{enumerate}
\usepackage{amstext}
\usepackage{amsopn}
\usepackage{amsbsy}
\usepackage{amscd}
\usepackage{amsxtra}
\usepackage{bbm}
\usepackage{amsthm}
\usepackage[numbers,sort]{natbib} 
\usepackage{amssymb}
\usepackage{esint}
\usepackage[margin=.75in]{geometry}
\usepackage{abstract,lipsum,mathrsfs}
\usepackage[new]{old-arrows}
\usepackage{environ,etoolbox}
\newcommand{\h}{\hspace}
\newcommand{\p}{\partial}
\numberwithin{equation}{section}
\usepackage[hidelinks]{hyperref}
\usepackage[makeroom]{cancel}
\usepackage{float}
\usepackage[title,titletoc]{appendix}
\newcommand{\mycomment}[1]{} 
\usepackage{cleveref}

\makeatother
\makeatletter
\newcommand\Const[3]{%
	\@ifundefined{#1-#2}%
	{\stepcounter{#3}\expandafter\xdef\csname #1-#2\endcsname{\arabic{#3}}}%
	{}%
	\ifnum\pdfstrcmp{#1}{eps}=0 
	\varepsilon_{\csname #1-#2\endcsname}%
	\else
	#1_{\csname #1-#2\endcsname}%
	\fi
}
\makeatother
\newcommand\C[1]{\Const{C}{#1}{Ccnt}} 

\newcommand{\pcm}[1]{{\color{purple}#1}}

\allowdisplaybreaks

\usepackage{scalerel,stackengine}
\newcommand\pig[1]{\scalerel*[5.5pt]{\big#1}{ 
		\ensurestackMath{\addstackgap[1.15pt]{\big#1}}}}
\newcommand\pigl[1]{\mathopen{\pig{#1}}}
\newcommand\pigr[1]{\mathclose{\pig{#1}}}

\makeatletter 
\newcommand{\customlabel}[2]{%
	\protected@write \@auxout {}{\string \newlabel {#1}{{#2}{\thepage}{#2}{#1}{}} }%
	\hypertarget{#1}{}
}
\makeatother

\newcommand\specialcomment[3]{%
	\newtoggle{#1}\toggletrue{#1}
	\NewEnviron{#1}{\iftoggle{#1}{\BODY}{}}
}
\newcommand{\excludecomment}[1]{\togglefalse{#1}}

\specialcomment{com}{}{} 
\excludecomment{com} 

\newtheorem{theorem}{Theorem}[section]
\newtheorem{lemma}{Lemma}[section]
\newtheorem{proposition}{Proposition}[section]
\newtheorem{corollary}{Corollary}[section]
\newtheorem{definition}{Definition}[section]
\newtheorem{remark}{Remark}[section]

\newtheorem{assumption}{Assumption}[section]

\def\XXint#1#2#3{{\setbox0=\hbox{$#1{#2#3}{\int}$}
		\vcenter{\hbox{$#2#3$}}\kern-.5\wd0}}

\title{Consumption and Investment in Incomplete Markets with Epstein–Zin Preferences in Infinite Horizon}
\date{\today}
\author[,1]{\normalsize Ho Man Tai\footnote{E-mail: homan.tai@sydney.edu.au}}
\affil[1]{\small\it School of Mathematics and Statistics, The University of Sydney, Sydney, Australia}

\begin{document}
	\maketitle
	
	\vspace{-30pt}
	
	\begin{abstract}
		{\bf Abstract:}	We solve the optimal consumption–investment problem in incomplete markets, where the investor aims to maximise an Epstein--Zin type stochastic differential utility from consumption over an infinite time horizon. We verify that the optimal strategies can be characterised by the limit of a sequence of solutions of the HJB equation in bounded domains with carefully designed boundary conditions. 
		We also conduct numerical experiments for three market models. Explicit solutions are constructed under some parameter regimes.
	\end{abstract}
	
	\noindent\textbf{Keywords:} Incomplete markets, Epstein-Zin preferences, portfolio choice, stochastic differential
	utility, consumption
	
	\noindent\textbf{Mathematics Subject Classification (2020):} 49L12, 49L20, 93E20, 91G10, 91G80 
	
	\noindent\textbf{JEL classification:} C61, D52, G11
	
	\tableofcontents
	
	
	\section{Introduction}  
	Under time-additive constant relative risk aversion (CRRA) utility, the same parameter determines both relative risk aversion and the elasticity of intertemporal substitution (EIS). This restriction prevents the investor's risk exposure from being separated from the allocation of consumption across time. This motivated the development of recursive preferences. Kreps and Porteus \cite{KrepsPorteus1978} first introduced a discrete-time framework that allows preferences over the timing of the resolution of uncertainty, resolving the asset pricing puzzles. Building on this idea, Epstein and Zin \cite{epstein1989substitution} developed a homothetic recursive utility in which relative risk aversion and the EIS
	can be chosen independently. Duffie and Epstein \cite{DuffieEpstein1992a,DuffieEpstein1992} subsequently formulated the continuous-time analogue through the theory of stochastic differential utility. This framework allows attitudes towards risk and intertemporal substitution to affect consumption and portfolio choice through distinct channels. When investment opportunities vary over time, it also influences the intertemporal hedging component of the optimal portfolio.\smallskip

	
	In this paper, we study the consumption and investment problems over an infinite time horizon under the Epstein--Zin type preference, in incomplete markets. We adopt the formulation of Epstein--Zin stochastic
	differential utility developed by Herdegen, Hobson and Jerome
	\cite{herdegen2023infinite_I,herdegen2023infinite_II,
		herdegen2025proper}. Letting \(\gamma\) denote relative risk aversion and
	\(\psi\) the elasticity of intertemporal substitution, we work under the condition $
	\vartheta
	:=
	\dfrac{1-\gamma}{1-\psi^{-1}}>0$
	which ensures that continuation
	utility and the consumption utility have compatible signs in the
	discounted infinite-horizon equation. The main contributions of this paper are twofold: we establish the well-posedness of the control problem and provide a reliable characterisation of the solutions that can be computed efficiently and accurately.   

	
	\subsection{Related literature}

	Epstein--Zin preferences have been used extensively in asset pricing because
	they separate aversion to long-run risk from the willingness to substitute
	consumption across time. This feature is central to the long-run-risk model of
	Bansal and Yaron \cite{BansalYaron2004}, where persistent movements in expected
	consumption growth and economic uncertainty help explain the equity premium,
	the low risk-free rate, market volatility, and return predictability. Related
	recursive-utility models have been used by Benzoni, Collin-Dufresne and
	Goldstein \cite{BenzoniCollinDufresneGoldstein2011} to study option prices and
	crash-related excess volatility; by Bhamra, Kuehn and Strebulaev
	\cite{BhamraKuehnStrebulaev2010} to examine levered equity premia and credit
	spreads; by Wachter \cite{Wachter2013} to analyse stock-market volatility under
	time-varying disaster risk; and by Ai and Kiku \cite{AiKiku2013} to explain the
	value premium in production economies with growth options.
	
	\smallskip
	
	These asset-pricing applications also motivate a rigorous characterisation of
	the superdifferential of indirect utility. Its supergradient is the
	marginal-value process supporting the investor's optimal consumption plan and,
	after normalisation, serves as a state-price density, or stochastic discount
	factor, for marginal claims. Duffie and Skiadas
	\cite{DuffieSkiadas1994} develop the continuous-time utility-gradient
	approach, building on the stochastic differential utility and asset-pricing
	framework of Duffie and Epstein
	\cite{DuffieEpstein1992,DuffieEpstein1992a}. 
	El Karoui, Peng and Quenez \cite{ElKarouiPengQuenez2001} establish a stochastic
	maximum principle for recursive utilities.\smallskip 
	
	There is a substantial body of finite-horizon literature treating portfolio choice under Epstein--Zin preference. Schroder and Skiadas \cite{SCHRODER2003155} use the utility-gradient
	approach to derive necessary and sufficient first-order conditions, forming a constrained FBSDE. In \cite{SCHRODER20051}, they extend to nontradeable income and constraints on portfolio values by introducing a
	class of translation-invariant recursive preferences. Kraft, Seifried and Steffensen
	\cite{kraft2013consumption} obtain solutions under a parameter
	restriction that linearises the HJB equation. Kraft, Seiferling and Seifried
	\cite{kraft2017optimal} remove this restriction under boundedness
	and ellipticity assumptions and construct the solution by a fixed-point method.
	Xing \cite{xing2017consumption} derives the superdifferential of indirect utility through a
	quadratic-BSDE approach for \(\gamma,\psi>1\) that permits an unbounded market price of risk, while
	Matoussi and Xing \cite{MatoussiXing} study the corresponding convex
	duality. Feng, Tian and Zheng \cite{feng2025consumption} extend BSDE methods to
	non-Markovian markets with unbounded coefficients. Aurand and Huang \cite{AurandHuang_random_horizon} solve the problem in an incomplete market with a possibly
	unbounded random horizon, characterising the optimal strategies through
	superlinear BSDEs. \smallskip

	The infinite-horizon formulation of the portfolio problem is substantially more delicate than its finite-horizon counterpart. Over an infinite horizon,
	there is no terminal date at which the recursion can be anchored. One must
	therefore determine which utility process, if any, should be assigned to a given
	consumption before the optimisation problem can be formulated. However, literature in this direction is limited and focuses mostly on constant markets. \smallskip

	The foundations of infinite-horizon Epstein--Zin utility are developed by Herdegen, Hobson and Jerome
	\cite{herdegen2023infinite_I,herdegen2023infinite_II,
		herdegen2025proper}. For \(0<\vartheta<1\), they
	construct a unique generalised utility process for every admissible consumption stream and
	verify the Merton type solution in a constant-parameter Black--Scholes--Merton
	market. For \(\vartheta>1\), they establish that nonuniqueness is intrinsic,
	introduce proper utility processes, and solve the constant-parameter problem
	over consumption streams with a unique proper utility process. The case \(\vartheta<0\) has recently been considered by Shigeta \cite{Shigeta2026} which shows that the utility bubble identified in
	\cite{herdegen2023infinite_II} can be removed by an order-equivalent
	transformation of the utility index, and constructs a maximal
	infinite-horizon utility as a limit of finite-horizon utilities. Melnyk, Muhle‐Karbe and Seifried \cite{EZ_transaction} solve the small-transaction-cost Epstein–Zin consumption–portfolio problem through a free-boundary variational inequality, proving optimality and deriving arbitrary-order asymptotic expansions; for general markets and dynamic preferences, they provide only a formal leading-order characterisation. Kang, Gou and Huang \cite{KANG2025108675} study the problem in stochastic-volatility markets, deriving exact exponential–quadratic HJB solutions for unit EIS and log-linear approximations. Both papers adopt the traditional difference form of Epstein–Zin utility and work within specified parameter, well-posedness, and verification conditions. We also refer to the insightful works of Guasoni and Wang \cite{guasoni2020consumption}, Gutekunst, Herdegen, and Hobson \cite{gutekunst2025optimal}, and Guasoni, Lawless, and Tai \cite{guasoni2025variational} on infinite-horizon problems with CRRA utility in general market models. \smallskip
	
	
	As this work was being finalised, we became aware of the closely related and insightful work of Bayraktar and Lawless
	\cite{bayraktar2026infinite}, who study the same problem when \(0<\vartheta<1\). They use the variational approach developed by Guasoni, Lawless and Tai \cite{guasoni2025variational} to characterise the value function as the minimiser of an energy functional. In contrast, our current study uses a different approach in which we construct the value function directly from the HJB equation, and also includes the results for $\vartheta>1$. \smallskip
	
	\subsection{Main results} 
	We study an infinite-horizon consumption--investment problem in a
	continuous-time market consisting of one safe asset and \(n\) risky
	assets described in \eqref{eq:assets}. The short rate, risk premia, and volatilities depend on a state
	variable following the dynamics in \eqref{eq:state_process}. Shocks to the state variable may be correlated with asset-return
	shocks. The investor chooses consumption and portfolio allocations, subject to
	admissibility, to maximise Epstein--Zin stochastic differential utility.
	Because the appropriate solution concept for the infinite-horizon utility
	recursion depends on the preference parameters (see \cite{herdegen2023infinite_I,herdegen2023infinite_II,herdegen2025proper}), we defer the precise
	definition of the utility process and the corresponding optimisation
	domain to \Cref{sec:investor_decisions}. Roughly speaking, this paper makes two main contributions:
	\begin{enumerate}
		\item[(1).] We prove the verification theorems and show the well-posedness of the portfolio problems for  $\vartheta\in (0,1)$ and $\vartheta>1$ separately, in incomplete markets. See \Cref{prop:classical_generalised_value_equality} and \Cref{thm:verification_theta_greater_one} respectively; \smallskip
		\item[(2).] We construct the solutions through a sequence of boundary-value problems on bounded domains. See \Cref{prop:pass_R_to_infty E=R,prop:pass_R_to_infty}. This construction is both analytically useful and computationally tractable.
	\end{enumerate} 
	
	Our first main result is a set of verification theorems for problems in incomplete markets. The corresponding problem in the constant-parameter markets was studied in
	\cite{herdegen2023infinite_I,herdegen2023infinite_II,herdegen2025proper}.
	In the present model, investment opportunities vary with a state variable,
	whose shocks need not be fully spanned by the traded assets. The optimal
	consumption--wealth ratio and portfolio weights are therefore
	state-dependent, and the value function is determined by a nonlinear
	equation on the state space. We show that the bounded positive solution
	\(h\) of the reduced HJB equation \eqref{eq: reduced HJB} generates admissible feedback controls and the corresponding value function. Verification
	arguments in two parameter regimes are fundamentally different.\smallskip
	
	When \(0<\vartheta<1\), we build on the generalised utility
	theory of \cite{herdegen2023infinite_I,herdegen2023infinite_II}.
	In particular, \cite[Theorems~6.5, 6.7 and 6.9]
	{herdegen2023infinite_II} constructs a unique, possibly
	extended-valued, generalised Epstein--Zin utility process for every
	consumption stream, while
	\cite[Theorem~8.1]{herdegen2023infinite_II} proves optimality over
	all attainable consumption streams in the constant-parameter markets. We prove the verification result on the full admissible
	class \eqref{def. A(x,y)} for incomplete markets. The candidate controls and consumption--wealth ratio are now state-dependent. We justify the candidate-induced change of measure by proving that its
	density process is a true martingale, and derive  the uniform-integrability and
	transversality estimates needed to remove localisation. When
	\(\gamma>1\), the geometric-Brownian domination used in
	\cite[Theorem~8.1]{herdegen2023infinite_II} is no longer available;
	we instead compare the perturbed HJB supersolution directly with
	the extremal generalised utility process. As a consequence,
	\Cref{prop:classical_generalised_value_equality} shows that, in the
	present incomplete factor model, the value over the full admissible
	class under generalised utility agrees with the value over the
	classically evaluable strategies.\smallskip

	The regime \(\vartheta>1\) presents a different difficulty: the
	infinite-horizon utility equation may have multiple solutions. Following
	\cite{herdegen2025proper}, we select the economically relevant solution
	through properness, see \Cref{def:proper_utility_theta_greater_one}. We first prove that the candidate consumption admits a unique proper utility process (candidate process) in \Cref{lem:candidate_proper_theta_greater_one} and establish an exponential terminal estimate for that process. We then prove the verification result
	in \Cref{thm:verification_theta_greater_one}. When \(\gamma<1\), the
	result applies to the full class of right-continuous consumption streams
	with a unique proper utility process, see \eqref{eq:verification_class_theta_greater_one}. When \(\gamma>1\), multiplication
	of the perturbed HJB supersolution by \(1-\gamma<0\) reverses the
	inequality and produces a subsolution of the transformed utility
	equation. The usual comparison
	argument therefore no longer shows that this subsolution is
	bounded above by the candidate process. We introduce
	\(\mathcal A^\dagger_{\mathrm{ver}}\) in \eqref{eq:verification_class_theta_greater_one} to impose sufficient conditions
	for this required comparison.\smallskip

	Several new ingredients are required. We prove that the stochastic
	exponential arising from the candidate dynamics is a true martingale by
	establishing conservativeness of the state diffusion under
	the candidate-induced change of measure. We then use
	the resulting change of measure to prove the terminal estimate and the
	unique properness of the candidate. For comparison, we introduce the
	random clock $\{\Lambda_t\}_{t\geq0}$ in \eqref{eq:candidate_discount_clock}
	which converts the state-dependent decay rate into exponential decay in
	the new time scale. This allows us to adapt the comparison theorem \cite[Proposition~B.6]{herdegen2025proper} to the present factor model. We also construct
	maximal proper solutions for consumption streams dominated by the
	candidate and establish their monotonicity, which makes it possible to
	pass from truncated consumption streams to a general uniquely proper
	stream. These arguments address difficulties created by the state-dependent and potentially unbounded coefficients.\smallskip
	
	Our second contribution is the construction of solutions to the reduced HJB equation \eqref{eq: reduced HJB}. Since the state space is unbounded, the equation
	need not have a unique solution, and its behaviour at the
	boundary is not prescribed a priori. We therefore solve suitable
	boundary-value problems on bounded intervals. For \(E=\mathbb R\), we impose the Dirichlet boundary condition on bounded intervals; for
	\(E=\mathbb R_+\), we use Neumann--Dirichlet problems. Uniform barriers and interior regularity
	estimates yield locally convergent subsequences whose limits are bounded
	positive solutions of the original HJB equation and satisfy the
	requirements of the verification theorems. \smallskip
	
	More importantly, solutions can be computed efficiently using this bounded-domain
	construction. We implement the resulting numerical scheme in
	mean-reverting risk-premium, Heston-type stochastic-volatility, and
	Cox--Ingersoll--Ross short-rate models. In tractable parameter regimes,
	the reduced equation becomes linear and admits explicit representations.
	Using these formulas as benchmarks, we find that the numerical
	approximations have small relative errors.\smallskip

	\paragraph{Organisation.} The remainder of the paper is organised as follows. \Cref{sec. Problem formulation} introduces the optimisation problems that the investors face. \Cref{sec. HJB Equation and construction of solution} derives the reduced HJB equation and constructs bounded positive solutions on
	\(\mathbb R\) and \(\mathbb R_+\). \Cref{sec. Verification theorem} proves the verification results for
	\(\vartheta\in(0,1)\) and \(\vartheta>1\). \Cref{sec. Examples and numerical results} studies the three factor models. We also presents the explicit solutions and numerical results.

	\section{Problem formulation}\label{sec. Problem formulation}
	
	In this section, we introduce the problem that the investor faces. Let $n \in \mathbb{N}$ be the number of risky assets and let $E$ be either $\mathbb{R}$ or $\mathbb{R}_+ := (0,\infty)$.
	
	\subsection{Financial market}\label{sec. Financial market}
	We first set the probability space and the market structure. The market has one safe asset and $n$ risky assets, whose investment
	opportunities are driven by an $E$-valued state variable $\{Y_t\}_{t \geq 0}$. This process satisfies
	\begin{equation}
		dY_t = b(Y_t)\,dt + a(Y_t)\,dW_t,
		\label{eq:state_process}
	\end{equation}
	where $\{W_t\}_{t \geq 0}$ is a one-dimensional Brownian motion and
	$b, a : E \to \mathbb{R}$ are some functions. The cumulative excess return of $i$-th risky asset evolves as
	\begin{equation}
		dR_t^i
		= \mu^i(Y_t)\,dt + \sum_{j=1}^n \sigma_{ij}(Y_t)\,dZ_t^j,
		\qquad \text{for }i = 1,2, \ldots, n,
		\label{eq. R dynamics}
	\end{equation}
	where $Z = \{Z_t^1,Z_t^2,\ldots,Z_t^n\}_{t \geq 0}$ is an $n$-dimensional
	Brownian motion, $\mu : E \to \mathbb{R}^n$ is the vector of risk premia,
	and $\sigma : E \to \mathbb{R}^{n \times n}$ is the volatility matrix.
	Asset prices satisfy
	\begin{equation}
		\frac{dS_t^0}{S_t^0} = r(Y_t)\,dt,\qquad
		\frac{dS_t^i}{S_t^i}=r(Y_t)\,dt+dR^i_t
		=
		\bigl[r(Y_t)+\mu^i(Y_t)\bigr]\,dt
		+
		\sum_{j=1}^{n}\sigma_{ij}(Y_t)\,dZ_t^j,
		\label{eq:assets}
	\end{equation} 
	for $i = 1,2, \ldots, n$, with $r : E \to \mathbb{R}$ the risk-free rate. The Brownian motions have a constant correlation structure: with $\rho \in \mathbb{R}^n$ and
	$|\rho| \leq 1$,
	\begin{equation}
		d\langle Z^i, W\rangle_t = \rho^i\,dt,
		\qquad \text{for }i = 1,\ldots,n.
		\label{eq:correlation}
	\end{equation}
	If $|\rho| = 1$,
	$\sigma(y)$ has full rank and \(a(y)\neq 0\) on $E$, then the factor shock of $Y$ is entirely spanned by traded
	assets, so state risk is perfectly hedgeable. Any $|\rho| < 1$, by contrast, may introduce an orthogonal source of uncertainty that cannot be traded away; in that case, the market is incomplete.\smallskip

	We must ensure the existence of processes $R$ and $Y$ in \eqref{eq:state_process}-\eqref{eq. R dynamics} with Brownian motions satisfying \eqref{eq:correlation}, under mild assumptions. Such existence has been established in \cite[Section 1.1]{PaoloScott12} and \cite[Section 2.1]{guasoni2020consumption}, we include the argument here for the reader’s convenience. Let $\Omega:=C([0,\infty);\mathbb R^n\times E)$
	be the canonical path space equipped with the topology of locally uniform convergence and $\mathcal{B}$ be its Borel $\sigma$-algebra.  We write $
	G_t(\omega)=\omega(t)$ for the canonical coordinate process and impose the following well-posedness assumption. 
	\begin{assumption}[\bf Well-posedness of the market]\label{ass.market well-posed} We assume that  
		\begin{enumerate}[(a).] 
			\item There is $\alpha \in (0,1)$ such that 
			$r \in C^{1,\alpha}(E;\mathbb{R})$, 
			$\mu \in C^{1,\alpha}(E;\mathbb{R}^n)$, 
			$b \in C^{1,\alpha}(E;\mathbb{R})$, 
			$a^2 \in C^{2,\alpha}(E;\mathbb{R})$, 
			$\Sigma:=\sigma\sigma^\top \in C^{2,\alpha}(E;\mathbb{R}^{n \times n})$ and 
			$\Upsilon:=a\h{.5pt}\sigma\rho \in C^{2,\alpha}(E;\mathbb{R}^n)$. We assume
			that \(a\) and \(\sigma\) are Borel measurable. Moreover, $\rho \in \mathbb{R}^n$ is a constant such that $|\rho|\in[0,1]$. 
			\item For any $y \in E$, $\Sigma(y)$ is strictly positive definite and $a(y)^2>0$.
			\item Let $(w_0,y) \in \mathbb{R}^n\times E$. There exists a unique solution $\mathbb{P}^{(w_0,y)}$ with
			$\mathbb{P}^{(w_0,y)}\pig(G_0 = (w_0,y)\pig) = 1$ and $\mathbb{P}^{(w_0,y)}\pig(G_t \in \mathbb{R}^n\times E \,\text{for all $t\geq0$} \pig) = 1$ solving the martingale problem for the
			generator 
			\begin{align*}
				\widehat{L} &:= \frac{1}{2} \sum_{i,j=1}^{n+1} \widehat{A}_{ij}(\cdot) \partial_{ij} + \sum_{i=1}^{n+1} \widehat{b}_i(\cdot) \partial_i,\quad\text{where } \widehat{A} := \begin{pmatrix} 
					\Sigma & \Upsilon \\
					\Upsilon^\top & a^2
				\end{pmatrix}, \quad \widehat{b} := \begin{pmatrix} 
					\mu\\
					b
				\end{pmatrix}.
			\end{align*} 
		\end{enumerate} 
	\end{assumption} 
	Let
	\[
	\mathcal B_t:=\sigma(G_s:0\le s\le t),
	\qquad
	\mathcal F_t:=\bigcap_{s>t}\mathcal B_s,
	\qquad
	\mathbb F=\{\mathcal F_t\}_{t\ge0}.
	\]
	We work with the \(\mathbb{P}^{(w_0,y)}\)-completion of \(\mathcal B\) and the usual \(\mathbb{P}^{(w_0,y)}\)-augmentation of \(\mathbb F\), without changing notation. 
	Moreover, the filtration is continuous. \Cref{ass.market well-posed} implies that the process $\{M^f_t\}_{t\geq 0}$ defined by
	\begin{align}\label{206}
		M^f_t := f(G_t)-f(G_0)-\int^t_0 \widehat{L} f(G_s) ds
	\end{align}
	is a $(\{\mathcal{B}_t\}_{t \geq0},\mathbb{P}^{(w_0,y)})$-martingale, for every $f \in C^2_c(\mathbb{R}^n \times E;\mathbb{R})$. As the process is continuous, it is also a $(\mathbb{F},\mathbb{P}^{(w_0,y)})$-martingale by \cite[Chapter II, Theorem 2.8]{revuz2013continuous}. Standard localisation/cutoff arguments applied to \eqref{206} give the local martingale $M_t:=G_t-G_0-\int^t_0 \widehat{b}(G_s)ds.$ We have $\langle M^i,M^j \rangle_t =\int^t_0 \widehat{A}_{ij}(G_s)ds$ by \eqref{206}. Levy’s characterisation then identifies
	\[Z_t=\int^t_0 \sigma(Y_s)^{-1} d(\Pi_1(M))_s\quad \text{and} \quad W_t=\int^t_0 \dfrac{1}{a(Y_s)} d(\Pi_2(M))_s\]
	\sloppy as the desired Brownian motions with the corresponding correlation, where $\Pi_1(x)=(x^1,x^2,\ldots,x^n)^\top$ and $\Pi_2(x)=x^{n+1}$ for $x\in \mathbb{R}^{n+1}$. Therefore, $(R,Y)=(\Pi_1(G),\Pi_2(G))$ solves  \eqref{eq:state_process}-\eqref{eq. R dynamics} in the probability space $(\Omega,\mathcal{B},\mathbb{F},\mathbb{P}^{(w_0,y)})$. Throughout the article, we keep $w_0 \in \mathbb{R}^n$ fixed and simply write $\mathbb{P}^{(w_0,y)}=\mathbb{P}$. The corresponding expectation is denoted by $\mathbb{E}$.

	\subsection{Investor's decisions}\label{sec:investor_decisions}
	
	An investor chooses an \(\mathbb F\)-progressively measurable portfolio process $\{\pi_t\}_{t\geq 0}=\{\pi_t^1,\pi_t^2,\ldots,\pi_t^n\}_{t\geq 0}$ and an \(\mathbb F\)-progressively measurable consumption-wealth ratio \(\{l_t\}_{t\geq 0}\). Here \(\pi^i\) is the fraction of wealth invested in the \(i\)-th risky asset, and \(1-\sum_{i=1}^n\pi^i\) is invested in the safe asset. For \(x>0\) and \(y\in E\), we set $\xi=(x,y)$. The wealth process $X^{\h{.5pt}\xi,\pi,l}$ satisfies
	\begin{align}
		\frac{dX_t^{\h{.5pt}\xi,\pi,l}}{X_t^{\h{.5pt}\xi,\pi,l}}
		=
		\bigl[
		r(Y_t)
		+
		\pi_t^\top\mu(Y_t)
		-
		l_t
		\bigr]dt
		+
		\pi_t^\top\sigma(Y_t)\,dZ_t,\quad \text{and $(X^{\h{.5pt}\xi,\pi,l}_0,Y_0)=\xi$.}
		\label{eq. wealth eq}
	\end{align}
	We denote by \(\mathcal A(x,y)\) the set of all admissible controls from initial wealth \(x\) and initial market state \((w_0,y)\):
	\begin{align}
		\mathcal A(x,y):=\left\{\!(\pi,l)\!:\!\begin{aligned}\;&
			\pi,\,l \text{ are } \mathbb F\text{-progressively measurable},\,	l_t\geq0,\,dt\otimes d\mathbb{P}\text{-a.e.}\\
			&
			\text{and}\,\,\int_0^T \!\Big[ |\pi_t^\top \mu(Y_t)|
			+\pi_t^\top \Sigma(Y_t)\pi_t
			+l_t\Big]dt < \infty, \text{ for every } T<\infty,\,\mathbb P\text{-a.s.} \end{aligned}
		\right\}.\label{def. A(x,y)}
	\end{align}
	Thus, for any \((\pi,l) \in \mathcal A(x,y)\), the wealth process $X^{\h{.5pt}\xi,\pi,l}$ in \eqref{eq. wealth eq} exists by the Doléans--Dade exponential formula \cite[Chapter II, Section 8]{Protter1990} and is positive for all time, $\mathbb{P}$-a.s.\smallskip
	
	We consider the continuous-time stochastic differential utility of Epstein--Zin type. Let $\delta$, $\beta$, $\psi$ and $\gamma$ be real numbers. We denote
	\[
	\qquad
	\vartheta:=\frac{1-\gamma}{1-1/\psi},
	\qquad
	q:=\frac{1/\psi-\gamma}{1-\gamma}=1-\dfrac{1}{\vartheta}.
	\] 
	Motivated by \cite{herdegen2023infinite_I}, we consider the discounted Epstein--Zin aggregator in the following form
	\begin{equation}
		g_{EZ}(t,c,v)
		:=
		\beta e^{-\delta t}
		\frac{c^{1-1/\psi}}{1-1/\psi}
		\bigl[(1-\gamma)v\bigr]^q 
		\qquad\text{for $c>0$ and $(1-\gamma)v>0$.}
		\label{eq:discounted_EZ_aggregator}
	\end{equation} 
	Here $\gamma \in(0,\infty)\setminus \{1\}$ represents the investor’s relative risk aversion, $\psi \in(0,\infty)\setminus \{1\}$ is the elasticity of intertemporal substitution (EIS) and $\delta \in \mathbb{R}$ is the impatience rate. The scaling factor $\beta>0$ has no effect on preferences as long as it is positive such that the aggregator $g_{EZ}$ is increasing in $c$. The arguments $c$ and $v$ of $g_{EZ}$ represent the consumption and continuation utility variable respectively.\smallskip
	
	\begin{definition}[\bf Epstein--Zin evaluable strategies]\label{def Epstein--Zin evaluable strategies}
		An admissible control \((\pi,l) \in \mathcal{A}(x,y)\) is called Epstein--Zin evaluable if there exists a unique (up to indistinguishability) $\mathbb{F}$-adapted càdlàg process
		\(J^{\,\xi,\pi,l}\), taking values in $(1-\gamma)\mathbb R_+=\mathbb R_+$ when $\gamma<1$ and $(1-\gamma)\mathbb R_+=\mathbb R_-:=(-\infty,0)$ when $\gamma>1$, such that
		\(g_{EZ}(s,l_sX_s^{\h{.5pt}\xi,\pi,l},J_s^{\h{.5pt}\xi,\pi,l})\) is well-defined
		\(dt\otimes d\mathbb P\)-a.e.,
		\begin{align}
			\mathbb E\!
			\left[
			\int_0^\infty
			\pig|
			g_{EZ}\bigl(s,l_s\,X_s^{\h{.5pt}\xi,\pi,l},J_s^{\h{.5pt}\xi,\pi,l}\bigr)
			\pig|ds
			\right]\!
			<\infty,\text{ and }
			J_t^{\,\xi,\pi,l}
			\!=
			\mathbb E_t^y\!
			\left[
			\int_t^\infty\!
			g_{EZ}\bigl(s,l_s\,X_s^{\h{.5pt}\xi,\pi,l},J_s^{\h{.5pt}\xi,\pi,l}\bigr)ds
			\right],\text{ for }t\ge0.
			\label{eq:EZ_utility eq}
		\end{align}
		Here $\mathbb E_t^y(\cdot):=
		\mathbb E(\,\cdot\mid\mathcal F_t)$ denotes the conditional expectation. The process $\{J_t^{\,\xi,\pi,l}\}_{t\geq0}$ is called the Epstein--Zin utility process or simply utility process associated with consumption $l\,X^{\h{.5pt}\xi,\pi,l}$. We denote by \(\mathcal A_{EZ}(x,y)\) the set of admissible controls that are Epstein--Zin evaluable. 
	\end{definition}
	\begin{remark}[\bf Comparison with classical Epstein--Zin utility]
		The expression in \eqref{eq:discounted_EZ_aggregator} is different from the classical Epstein--Zin aggregator used in continuous setting in the literature. For example, in \cite{xing2017consumption,MatoussiXing,kraft2017optimal,feng2025consumption}, the authors use the difference form
		
		$$
		\frac{\delta}{1-1/\psi}
		c^{1-1/\psi}
		\bigl[(1-\gamma)v\bigr]^q
		-
		\frac{\delta}{1-1/\psi}
		(1-\gamma)v.
		$$
		This difference form subtracts a second term, so the positive and negative parts may both be infinite. One way to avoid this ill-defined expectation is to restrict the admissible
		class so that the difference-form integral is well defined. Any verification theorem
		proved under such a restriction then applies only to that restricted class, whereas the discounted form \eqref{eq:discounted_EZ_aggregator} is designed to make the infinite-horizon problem meaningful on a larger and more natural domain. Moreover, when \(\psi=1/\gamma\), the discounted form reduces directly to the
		standard discounted CRRA utility. By contrast, the difference-form equation
		is equivalent to the standard infinite-horizon CRRA formulation only after imposing
		the appropriate transversality condition. In the infinite-horizon setting, the difference-form formulation is usually justified through finite-horizon equations together with a transversality condition. If this transversality condition is not economically appropriate, the resulting utility process may be driven by an artificial terminal continuation value rather than by the consumption flow itself. This leads to a bubble-type utility process, where the value assigned to consumption is not supported by the accumulated flow utility. We refer \cite[Remark 4.3]{herdegen2023infinite_I} for more discussions. 
	\end{remark}

	To ensure that the set \(\mathcal A_{EZ}(x,y)\) is meaningful, the parameters in the discounted Epstein--Zin aggregator $g_{EZ}$ are required to satisfy:
	\begin{assumption}[\bf Parameter constraints]\label{ass. Parameters constraint} We assume that
		$$\delta \in \mathbb{R},\quad
		\beta>0,\quad
		\psi,\gamma\in(0,\infty)\setminus \{1\},\quad \text{and}\quad \vartheta:= \dfrac{1-\gamma}{1-1/\psi} \in(0,\infty).$$
	\end{assumption}
	\textbf{Throughout the article, we assume without further mention that Assumptions \ref{ass.market well-posed} and \ref{ass. Parameters constraint} hold.}
	\begin{remark}[\bf On the restriction \(\vartheta>0\)] A discussion of these parameter restrictions can be found in \cite[Remark 4.2]{herdegen2023infinite_I}. The last condition that $\vartheta>0$ deserves more discussions. It is necessary for the infinite-horizon discounted
		Epstein--Zin utility equation to have the correct sign structure and an economically
		meaningful solution, see \cite[Theorem 4.4, Remark 4.5]{herdegen2023infinite_I}. This is very different from the finite horizon problems \cite{feng2025consumption,xing2017consumption} where they allow $\vartheta<0$. Indeed, the continuation utility is naturally required to take values in $(1-\gamma)\mathbb R_+$. On the other hand, the running consumption term in the aggregator contains \[ \frac{c^{1-1/\psi}}{1-1/\psi}, \] whose sign is determined by \(1-1/\psi\). Therefore, in the absence of a terminal payoff, the two sides of the infinite-horizon recursive utility equation \eqref{eq:EZ_utility eq} have compatible signs only when $\operatorname{sgn}(1-\gamma) = \operatorname{sgn}\!\left(1-1/\psi\right)$, which is precisely \(\vartheta>0\). This issue is specific to the infinite-horizon formulation. In a finite-horizon problem, one may sometimes compensate for a sign mismatch by adding a bequest term at the terminal time. Thus, following \cite[Theorem~4.4 and Remark~4.5]{herdegen2023infinite_I}, we impose \(\vartheta>0\). The same restriction is also consistent with the Hamilton--Jacobi--Bellman (HJB) viewpoint in our work. 
		\end{remark}

	However, strategies may be excluded from \(\mathcal A_{EZ}(x,y)\) not because they are economically suboptimal, but simply because their associated consumption cannot be evaluated by the utility equation \eqref{eq:EZ_utility eq}. Optimizing only over \(\mathcal A_{EZ}(x,y)\) may lead to an artificially small admissible set. For example, we suppose \(1/\psi>1\). The consumption ratio \(l_t\equiv0\) is financially admissible, but the Epstein--Zin utility equation \eqref{eq:EZ_utility eq} is not directly evaluable because \(c^{1-1/\psi}\) is singular at \(c=0\). Hence the absolute-integrability condition in \eqref{eq:EZ_utility eq} fails. Economically, however, zero consumption should not be removed from the admissible set; rather, when \(1/\psi>1\), it should be assigned the extended value \(-\infty\).\smallskip

	\subsubsection{The regime \texorpdfstring{$\vartheta\in(0,1)$}{theta in (0,1)}}
	For \(\vartheta\in(0,1)\), this difficulty can be avoided by using the generalised Epstein--Zin utility process proposed by \cite[Sections 4-6]{herdegen2023infinite_II}. The generalised formulation assigns a well-defined (by the monotone approximation procedure in \Cref{def:generalised_EZ_solution}) utility process to every admissible control in \(\mathcal A(x,y)\). To see how it works, we first define the generalised discounted Epstein--Zin aggregator:
	
	\begin{definition}[\bf Generalised discounted Epstein--Zin aggregator]
		\label{def:EZ_aggregator}
		Assume $\vartheta\in(0,1)$ (equivalently $q<0$). We adopt the
		conventions $
		0^{q} := \infty$ and $\infty^{q} := 0$. For $c\in[0,\infty)$ and $(1-\gamma)v\in [0,\infty]$, the generalised discounted Epstein--Zin aggregator (simply generalised aggregator) $\widetilde{g}_{EZ}$ is defined by
		\begin{equation*}
			\widetilde{g}_{EZ}(t,c,v)
			:=
			\begin{cases}
				\displaystyle
				\beta e^{-\delta t}
				\frac{c^{1- 1/\psi}}{1-1/\psi}\,
				\big[(1-\gamma) v \big]^{q},
				&c^{1-1/\psi}\in(0,\infty),\quad (1-\gamma)v \in(0,\infty),
				\\[1.0em]
				\displaystyle
				\frac{\beta e^{-\delta t}}{1-\gamma}\,
				\big[(1-\gamma) v \big]^{q},
				&c^{1-1/\psi}\in(0,\infty),\quad (1-\gamma)v =\text{$0$ or $\infty$},
				\\[1.0em]
				\displaystyle
				\beta e^{-\delta t}
				\frac{c^{1-1/\psi}}{1-1/\psi},
				&c^{1-1/\psi}=\text{$0$ or $\infty$},\quad (1-\gamma)v \in[0,\infty].
			\end{cases}
		\end{equation*} 
	\end{definition}

	Let $\mathscr{P}([0,\infty))$ denote the set of all nonnegative progressively measurable processes. Then we define the generalised Epstein--Zin utility process:

	\begin{definition}[\bf Generalised Epstein--Zin utility process]
		\label{def:generalised_EZ_solution}
		Assume that \(\vartheta\in(0,1)\). Let
		\(\{C_t\}_{t\ge0} \in \mathscr{P}([0,\infty))\). The generalised Epstein--Zin utility process $\widetilde J^C$ associated with \(C\) is the
		adapted càdlàg process obtained by the following monotone approximation: If \(\gamma<1\) (resp. \(\gamma>1\)), let
		\(\{C^n\}_{n\in\mathbb N}\) be any nondecreasing (resp. nonincreasing) sequence in $\mathscr{P}([0,\infty))$ such that
		\begin{enumerate}[(a).]
			\item $C^n \to C$, $dt\otimes d\mathbb P$ -a.e.;
			\item for each \(n\in\mathbb N\), there exists a unique utility process \(J^n\) associated with \(C^n\) such that $J_t^n
			=
			\mathbb E_t^y
			\left[
			\int_t^\infty
			\widetilde g_{EZ}
			\bigl(s,C_s^n,J_s^n\bigr)\,ds
			\right],$ for $t\ge0,$
			\item $\mathbb E
			\left[
			\int_0^\infty
			\left|
			\widetilde g_{EZ}
			\bigl(s,C_s^n,J_s^n\bigr)
			\right|ds
			\right]
			<\infty.$		\end{enumerate} 
		Then define $
		\widetilde J_t^C
		:=
		\displaystyle\lim_{n\to\infty}J_t^n
		\in[0,\infty]$ (resp. $\displaystyle\lim_{n\to\infty}J_t^n
		\in[-\infty,0]$) for $t\ge0.$ If \(C=lX^{\h{.5pt}\xi,\pi,l}\) for
		\((\pi,l)\in\mathcal A(x,y)\), then we also write $
		\widetilde J^{\,\xi,\pi,l}:=\widetilde J^C.$
	\end{definition}
	
	From \cite[Theorems 6.5, 6.9]{herdegen2023infinite_II}, such monotone approximations exist and the limit \(\widetilde J^C\) is independent of the chosen approximating sequence. If \(\vartheta\in(0,1)\), then the generalised utility process associated with $l X^{\h{.5pt}\xi,\pi,l}$ exists and is unique for each control in $\mathcal{A}(x,y)$:

	\begin{lemma}[\bf Existence and uniqueness of generalised Epstein--Zin utility]\label{lem:generalised_EZ_existence_comparison}
		Assume \(\vartheta\in(0,1)\). For every \((\pi,l)\in\mathcal A(x,y)\), there exists a unique (up to indistinguishability) generalised utility process $\widetilde J^{\,\xi,\pi,l}$ associated with $l X^{\h{.5pt}\xi,\pi,l}$, satisfying \Cref{def:generalised_EZ_solution}.
	\end{lemma}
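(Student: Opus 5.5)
The plan is to reduce the lemma to the abstract results of \cite[Theorems 6.5, 6.7 and 6.9]{herdegen2023infinite_II}. The first step is to check that for every \((\pi,l)\in\mathcal A(x,y)\) the consumption stream \(C:=lX^{\h{.5pt}\xi,\pi,l}\) lies in \(\mathscr P([0,\infty))\). The wealth process is well defined and strictly positive by the Doléans--Dade formula. It is continuous and adapted, hence progressively measurable, and \(l\ge0\) is progressively measurable by the definition of \(\mathcal A(x,y)\). Therefore \(C\) is a nonnegative progressively measurable process. The filtration \(\mathbb F\) satisfies the usual conditions, since it is the augmentation of a right-continuous filtration. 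This is exactly the abstract setting of \cite{herdegen2023infinite_II}, which works on a general filtered probability space and requires no Markov structure.

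The second step is to produce one admissible approximating sequence \(\{C^n\}\) in the sense of \Cref{def:generalised_EZ_solution}; I would treat the two signs of \(1-\gamma\) separately. For \(\gamma<1\) (so \(\psi>1\) because \(\vartheta>0\)), I would take truncations from below, for instance \(C^n:=\min\{C,n\}\,e^{-\kappa_n t}\)-type streams or simply \(C^n:=C\wedge (n^{-1}e^{-kt})\vee\ldots\). The key requirement is that each \(C^n\) be sandwiched between two constant multiples of a deterministic stream \(e^{-kt}\) for a suitable rate \(k\). For such streams, \cite{herdegen2023infinite_II} provides existence and uniqueness of a utility process with \(\mathbb E\int_0^\infty|\widetilde g_{EZ}|\,ds<\infty\), using comparison against the explicit deterministic solutions. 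For \(\gamma>1\), I would use nonincreasing approximations from above, for instance \(C^n:=C\vee(n^{-1}\eta_t)\) suitably capped, with \(\eta\) a deterministic exponential stream, so that again each \(C^n\) is bounded above and below by multiples of \(\eta\). In both cases \(C^n\to C\) pointwise, which gives (a). Conditions (b) and (c) then follow from the cited theorem for ``bounded by multiples of a reference stream'' consumption.

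The third step invokes \cite[Theorem 6.5]{herdegen2023infinite_II} (monotonicity/comparison of the utility processes \(J^n\) along a monotone sequence) so that \(\lim_n J^n_t\) exists in \([0,\infty]\), respectively \([-\infty,0]\). It then invokes \cite[Theorem 6.9]{herdegen2023infinite_II}, which says that the limit does not depend on the chosen sequence. The argument interlaces two monotone sequences and uses comparison to show that their limits dominate each other. Existence of a càdlàg version and uniqueness up to indistinguishability follow, because two càdlàg limits agreeing a.s. at each rational time are indistinguishable.

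The main obstacle is the second step: verifying that the hypotheses of \cite{herdegen2023infinite_II} on the approximating streams are really met, in particular the integrability condition (c), when \(\delta\) may be negative and the aggregator's sign structure depends on \(\gamma\gtrless1\). If the direct citation does not cover my truncations, I would first construct \(J^n\) explicitly for the reference streams \(Ke^{-kt}\) by solving the resulting deterministic ODE. I would then get the sandwiched streams by comparison with these explicit solutions, choosing \(k\) large enough relative to \(\delta\) and \(\vartheta\) to make the integrals finite.
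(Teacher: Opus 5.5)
Your overall strategy matches the paper's: check that $C=lX^{\xi,\pi,l}\in\mathscr{P}([0,\infty))$, then appeal to \cite{herdegen2023infinite_II}. But the step you single out as the main obstacle does not work as written, and you miss the one ingredient that makes the citation applicable. Your setting is \emph{not} exactly the abstract setting of \cite{herdegen2023infinite_II}, because the aggregator there carries no factor $\beta e^{-\delta t}$. The paper removes this factor with the deterministic rescaling $\overline C_t:=(\beta e^{-\delta t})^{1/(1-1/\psi)}C_t$, which gives $\overline C_t^{\,1-1/\psi}=\beta e^{-\delta t}C_t^{1-1/\psi}$. After this rescaling:
\begin{itemize}
\item $\overline C\in\mathscr{P}([0,\infty))$;
\item $\widetilde g_{EZ}(t,C_t,v)$ becomes the undiscounted generalised aggregator evaluated at $(\overline C_t,v)$;
\item monotone approximating sequences and their limits correspond one-to-one, since the factor is deterministic and positive.
\end{itemize}
Then \cite[Theorem~6.9]{herdegen2023infinite_II}, applied to $\overline C$, delivers at once the existence of an admissible approximating sequence, the independence of the limit from the sequence, and uniqueness. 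The sign of $\delta$ plays no role, and no approximants need to be built by hand.

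Your hand-built approximants also cannot meet the two-sided ``sandwich'' requirement you impose on them.
\begin{itemize}
\item For $\gamma<1$, the sequence must be nondecreasing with $C^n\to C$, so $C^n\le C$. Since $\mathcal A(x,y)$ allows $l=0$ (so $C=0$) on sets of positive $dt\otimes d\mathbb P$-measure, each $C^n$ must vanish there. Hence no $C^n$ can be bounded below by a positive multiple of $e^{-kt}$.
\item For $\gamma>1$, the sequence must be nonincreasing with $C^n\to C$, so $C^n\ge C$. Hence no $C^n$ is bounded above by a multiple of $\eta$ unless $C$ already is.
\item A two-sided truncation such as $(C\vee n^{-1}\eta)\wedge n\eta$ is not monotone in $n$ at all: the floor decreases while the cap increases. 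It therefore violates Definition~\ref{def:generalised_EZ_solution}.
\end{itemize}
Your fallback of choosing $k$ large relative to $\delta$ only addresses the integrability of the reference streams, not this structural conflict.

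To fix the proof, replace Step~2 by the rescaling above and cite Theorem~6.9 directly. Your Step~3 discussion of Theorems~6.5 and~6.9 is then simply the content of that citation, not something you need to reprove.
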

	\begin{proof} 
		
		\sloppy Each \((\pi,l)\in\mathcal A(x,y)\) generates the consumption $C^{\h{.5pt}\xi,\pi,l}:=\bigl\{l_t X_t^{\h{.5pt}\xi,\pi,l}\bigr\}_{t\geq0}.$ Since \(l\) and \(X^{\h{.5pt}\xi,\pi,l} \) are in $\mathscr{P}([0,\infty))$, it follows that $C^{\h{.5pt}\xi,\pi,l} \in \mathscr{P}([0,\infty))$. 
		We equivalently absorb the deterministic factor into consumption by setting $
		\overline C_t^{\h{.8pt}\xi,\pi,l}
		:=
		\bigl(\beta e^{-\delta t}\bigr)^{\frac{1}{1-1/\psi}}
		C_t^{\h{.5pt}\xi,\pi,l}$, so \cite[Theorem~6.9]{herdegen2023infinite_II} applies. 
	\end{proof}

	Consequently, under \(\vartheta\in(0,1)\), no additional Epstein--Zin
	evaluability restriction is needed: every admissible control in
	\(\mathcal A(x,y)\) has a well-defined and unique generalised utility process. Therefore, it is safe to consider the problem $	\displaystyle\sup_{(\pi,l)\in\mathcal{A}(x,y)}
	\widetilde J_0^{\,\xi,\pi,l}$.

	\subsubsection{The regime \texorpdfstring{$\vartheta > 1$}{theta > 1}}
	
	In this regime, a consumption induced from the admissible set $\mathcal{A}(x,y)$ may admit more than one utility process. Following \cite{herdegen2025proper}, we therefore use properness to select the economically relevant solution. At the boundary of the domain, we use the following continuous and extended-valued conventions:
	\[
	g_{EZ}(t,c,v)
	=
	\begin{cases}
		0, & c>0,\ v=0 \,\, or\,\, c=0,\ \gamma<1,\\[2pt] 
		-\infty, & c=0,\ \gamma>1.
	\end{cases}
	\]
	Thus, when \(\gamma>1\), a finite-valued utility process can be associated
	only with a consumption stream that is strictly positive
	\(dt\otimes d\mathbb P\)-almost everywhere.
	
	\begin{definition}[\bf Proper utility process]
		\label{def:proper_utility_theta_greater_one}
		A utility process $J^C$ associated with $C \in \mathscr{P}([0,\infty))$ is called proper if it satisfies \eqref{eq:EZ_utility eq} and, for every $t\ge0$,
		\begin{equation}
			\mathbb E_t\left[\int_t^\infty
			e^{-\delta\vartheta s}C_s^{1-\gamma}\,ds\right]>0
			\quad\Longrightarrow\quad
			(1-\gamma)J^C_t>0,	\qquad \mathbb P\text{-a.s.}
			\label{eq:proper_utility_theta_greater_one}
		\end{equation}
		When $\gamma>1$, we use the convention $0^{1-\gamma}=+\infty$.
	\end{definition}
	\noindent The deterministic discount factor in
	\eqref{eq:proper_utility_theta_greater_one} does not change the event on
	which the conditional expectation is positive, so this definition is
	equivalent to \cite[Definition~4.1]{herdegen2025proper}. Let $\mathcal A^\dagger(x,y)$ be the subset of $\mathcal A(x,y)$ for which $C^{\xi,\pi,l}$ is right-continuous and admits a unique proper utility process. This is a nonempty set by \Cref{lem:candidate_proper_theta_greater_one}.  Moreover, \cite[Theorem~4.9]{herdegen2025proper} gives a broad
	sufficient condition for membership in the class $\mathcal A^\dagger(x,y)$. Examples include the geometric-Brownian and factor-dependent
	consumption processes considered in \cite[Propositions~4.10 and 4.13]
	{herdegen2025proper}. \smallskip

	When \(\gamma<1\), a suitable solution \(h\) of the HJB equation can address the problem $\displaystyle
	\sup_{(\pi,l)\in\mathcal A^\dagger(x,y)} J_0^{\,\xi,\pi,l}$ and the supremum is attained by the feedback controls associated with \(h\), see \Cref{thm:verification_theta_greater_one}. 
	The situation is different when \(\gamma>1\). Multiplication by \(1-\gamma<0\) reverses the HJB inequality. Hence the comparison principle cannot be applied to show that the investor’s proper utility is smaller than the HJB candidate. We therefore restrict attention to the subclass defined in \eqref{eq:verification_class_theta_greater_one}, for which the required
	domination of the HJB subsolution by the proper utility process can be
	established.
	\smallskip

	\section{HJB Equation and construction of solution}\label{sec. HJB Equation and construction of solution}
	In this section, we construct bounded positive solutions of the HJB equation that satisfy the conditions of the verification theorem. The idea is to reduce the HJB equation to a semilinear ODE by a power transformation. Next, we construct sequences of solutions to this ODE on bounded domains separately for $E=\mathbb{R}$ and $E=\mathbb{R}_+$. The limits of these sequences are then verified to solve the original equation on $E$.
	
	\subsection{Derivation of the reduced HJB equation}

	Formally, we denote by
	\(V(t,x,y)\) the value function of the problem $	\displaystyle\sup_{(\pi,l)\in\mathcal A_{EZ}(x,y)}
	J_0^{\,\xi,\pi,l}$ when the starting time is \(t>0\).
	If the value function is sufficiently smooth, then the HJB equation is
	\begin{align}
		0
		=
		\p_t V(t,x,y)
		+
		\sup_{\pi\in \mathbb{R}^n,\,l>0}
		\Bigg\{
		&
		g_{EZ}\bigl(t,lx,V(t,x,y)\bigr)
		+
		x\left[r(y)+\pi^\top\mu(y)-l\right]\p_x V(t,x,y)\notag\\
		&+
		b(y)\p_y V(t,x,y) 
		+
		\frac12 x^2
		\pi^\top\Sigma(y)\pi
		\p_{xx} V(t,x,y)\notag\\
		&+
		x\,a(y)\,
		\pi^\top\sigma(y)\rho
		\p_{xy} V(t,x,y)
		+
		\frac12 a(y)^2\p_{yy} V(t,x,y)
		\Bigg\}.
		\label{eq:time_dependent_HJB_EZ}
	\end{align}
	We look for a value function of the form
	\begin{equation}
		V(t,x,y)
		=
		e^{-\delta\vartheta t}V(x,y).
		\label{eq:time_factorization}
	\end{equation}
	
	For $l>0$, we substitute \eqref{eq:time_factorization} into the aggregator and then use \(1+\vartheta q=\vartheta\) to obtain
	\[
	\begin{aligned}
		g_{EZ}\bigl(t,lx,V(t,x,y)\bigr)
		=
		\beta e^{-\delta t}
		\frac{(lx)^{1-1/\psi}}{1-1/\psi}
		\bigl[(1-\gamma)e^{-\delta\vartheta t}V(x,y)\bigr]^q  
		=
		\beta e^{-\delta\vartheta t}
		\frac{(lx)^{1-1/\psi}}{1-1/\psi}
		\bigl[(1-\gamma)V(x,y)\bigr]^q .
	\end{aligned}
	\]
	Using the above relation and dividing \eqref{eq:time_dependent_HJB_EZ} by \(e^{-\delta\vartheta t}\), we obtain
	the time-homogeneous equation
	\begin{align}
		0
		=
		\sup_{\pi\in \mathbb{R}^n,\,l>0}
		\Bigg\{
		&
		\overline{g}_{EZ}\bigl(lx,V(x,y)\bigr)
		+
		x\left[r(y)+\pi^\top\mu(y)-l\right]\p_x V(x,y)
		+
		b(y)\p_y V(x,y)
		\notag
		\\
		&
		+
		\frac12
		x^2
		\pi^\top\sigma(y)\sigma(y)^\top\pi
		\p_{xx} V(x,y)
		+
		x\,a(y)\,
		\pi^\top\sigma(y)\rho
		\p_{xy} V(x,y)
		+
		\frac12 a(y)^2\p_{yy} V(x,y)
		\Bigg\},
		\label{eq:HJB_EZ}
	\end{align}
	where the reduced aggregator is $\overline{g}_{EZ}(c,v)
	:=
	\beta
	\dfrac{c^{1-1/\psi}}{1-1/\psi}
	\bigl[(1-\gamma)v\bigr]^q
	-
	\delta\vartheta v.$\smallskip

	We then exploit the spatial homogeneity of the problem. For any
	$\lambda > 0$, the linearity of the wealth dynamics in \eqref{eq. wealth eq} gives
	$X^{(\lambda x,y),\pi,l} = \lambda X^{(x,y),\pi,l}$, and the aggregator
	satisfies
	$g_{EZ}(t,\lambda c, \lambda^{1-\gamma}v)
	= \lambda^{1-\gamma}g_{EZ}(t,c,v)$.
	It follows that $J^{\,(\lambda x,y),\pi,l} = \lambda^{1-\gamma}J^{\,(x,y),\pi,l}$,
	and hence the value function is homogeneous of degree $1-\gamma$ in wealth:
	$V(\lambda x,y)=\lambda^{1-\gamma}V(x,y)$.  This motivates the ansatz
	\begin{equation}
		V(x,y)
		=
		\frac{x^{1-\gamma}}{1-\gamma}\,h(y)^{\chi}, 
		\label{eq:homothetic_value_delta}
	\end{equation}
	where $h:E\to \mathbb{R}_+$ is some function and $\chi\neq 0$ is a constant to be determined.  The partial derivatives of $V$
	are
	\begin{align*}
		\p_x V    &= x^{-\gamma}h^{\chi},
		&
		\p_{xx} V &= -\gamma x^{-\gamma-1}h^{\chi},
		\notag \\
		\p_y V    &= \frac{x^{1-\gamma}}{1-\gamma}\,\chi h^{\chi-1}h',
		&
		\p_{xy} V &= x^{-\gamma}\chi h^{\chi-1}h',
		\notag \\
		\p_{yy} V &= \frac{x^{1-\gamma}}{1-\gamma}
		\Bigl[\chi h^{\chi-1}h''
		+\chi(\chi-1)h^{\chi-2}(h')^{2}\Bigr],
	\end{align*}
	where all functions are evaluated at $(x,y)$ or $y$ unless stated otherwise. Substituting~\eqref{eq:homothetic_value_delta} into the reduced aggregator
	and setting $c=lx$, we obtain
	\begin{align*}
		\overline{g}_{EZ}\bigl(lx,\,V(x,y)\bigr)
		=
		\beta\,\frac{(lx)^{1-1/\psi}}{1-1/\psi}\,
		\bigl(x^{1-\gamma}h^{\chi}\bigr)^{q}
		-
		\delta\vartheta\,\frac{x^{1-\gamma}}{1-\gamma}\,h^{\chi}
		=
		x^{1-\gamma}
		\left[
		\beta\,\frac{l^{1-1/\psi}}{1-1/\psi}\,h^{\chi q}
		-
		\frac{\delta\vartheta}{1-\gamma}\,h^{\chi}
		\right].
	\end{align*}
	Since $q =1-\dfrac{1}{\vartheta}$, we have
	$h^{\chi q}=h^{\chi}\,h^{-\frac{\chi}{1-\gamma}(1-1/\psi)}$. Assuming formally that $h>0$ (which will be shown later), the first-order condition with respect to $l$ yields the candidate for the optimal consumption--wealth ratio
	\begin{equation}
		l^{*}(y)
		=
		\beta^{\psi}\,
		h(y)^{-\frac{\chi(\psi-1)}{1-\gamma}}.
		\label{eq:l_star_chi}
	\end{equation}
	As $\Sigma(y)$ is invertible, the first-order condition with respect to $\pi$ yields the candidate of optimal portfolio
	\begin{equation}
		\pi^{*}(y)
		=
		\frac{1}{\gamma}\,\Sigma(y)^{-1}
		\!\left[
		\mu(y)
		+\chi\,\frac{h'(y)}{h(y)}\,\Upsilon(y)
		\right].
		\label{eq:pi_star_chi}
	\end{equation}
	Substituting $l^{*}$ and $\pi^{*}$ back into the HJB equation \eqref{eq:HJB_EZ} and
	multiplying through by $\frac{1-\gamma}{\chi h(y)^{\chi-1}}$, we arrive at
	\begin{align*}
		0
		={}&
		\frac{a(y)^{2}}{2}\,h''(y)
		+
		\left[
		b(y)
		-\Bigl(1-\tfrac{1}{\gamma}\Bigr)
		\mu(y)^{\top}\Sigma(y)^{-1}\Upsilon(y)
		\right]h'(y)
		\notag\\
		&+
		\left[
		\frac{a(y)^{2}}{2}(\chi-1)
		+
		\frac{\chi(1-\gamma)}{2\gamma}\,
		\Upsilon(y)^{\top}\Sigma(y)^{-1}\Upsilon(y)
		\right]
		\frac{(h'(y))^{2}}{h(y)}
		\notag\\
		&+
		\frac{1-\gamma}{\chi}
		\left[
		r(y)
		-\frac{\delta\psi}{\psi-1}
		+\frac{1}{2\gamma}\,\mu(y)^{\top}\Sigma(y)^{-1}\mu(y)
		\right]h(y)
		+
		\frac{\beta^{\psi}(1-\gamma)}{\chi(\psi-1)}\,
		h(y)^{1-\frac{\chi(\psi-1)}{1-\gamma}}.
	\end{align*}
	The coefficient of $\tfrac{(h')^{2}}{h}$ vanishes if and only if we choose
	\begin{equation}
		\chi
		:=
		\frac{\gamma}{
			\gamma+(1-\gamma)\,\rho^{\top}\rho
		}
		=
		\frac{\gamma}{
			\gamma(1-\rho^{\top}\rho)+\rho^{\top}\rho
		}
		>0.
		\label{eq:chi_constant}
	\end{equation}
	With this choice,
	the HJB equation reduces to
	\begin{align}
		0
		={}&
		\frac{a(y)^{2}}{2}\,h''(y)
		+\widetilde{b}(y)\,h'(y)
		-\kappa(y)\,h(y)
		+\eta
		h(y)^p,\quad\text{for $y \in E$,}
		\label{eq: reduced HJB}
	\end{align}
	where
	\begin{equation}\label{eq:p_eta_def}
		p
		:=
		1 - \frac{\chi(\psi-1)}{1-\gamma},
		\qquad
		\eta
		:=
		\frac{\beta^{\psi}(1-\gamma)}{\chi(\psi-1)},\qquad
		\widetilde{b}(y)\!
		:=\!
		b(y)
		\!-\!\Bigl(1-\tfrac{1}{\gamma}\Bigr)\,
		\mu(y)^{\top}\Sigma(y)^{-1}\Upsilon(y),
	\end{equation}
	$$\kappa(y)\!
	:=\!
	\frac{1-\gamma}{\chi(\psi-1)}
	\left[
	\delta\psi
	+(1-\psi)\left( r(y)
	+\frac{1}{2\gamma}\,\mu(y)^{\top}\Sigma(y)^{-1}\mu(y)\right) 
	\right]. $$
	Since $\beta > 0$, one can verify directly that
	\begin{equation}\label{eq:eta_p_relation}
		\eta(p-1) = -\beta^{\psi} < 0,
	\end{equation}
	therefore, we have $\eta > 0$ and $p < 1$ as $\vartheta > 0$. 
	\subsection{Dirichlet boundary problem for \texorpdfstring{$E = \mathbb{R}$}{E=R} }\label{sec. Dirichlet boundary problem}
	
	Throughout this section, we take $E = \mathbb{R}$ and approximate the solution of the reduced HJB equation \eqref{eq: reduced HJB} in a bounded domain with boundary conditions. For any $R>0$, we consider
	\begin{equation}\label{eq:BVP_h}
		\left\{
		\begin{aligned}
			&\frac{a(y)^2}{2}\,h''(y)
			+
			\widetilde{b}(y)\,h'(y)
			-
			\kappa(y)\,h(y)
			+\eta\,
			h(y)^p
			= 0,
			\qquad y \in (-R,R), \\[4pt]
			&h(\pm R)
			=
			\dfrac{1}{\kappa(\pm R)}.
		\end{aligned}
		\right.
	\end{equation}
	Next, we impose the following assumption:
	
	\begin{assumption}[\bf Positive myopic consumption]\label{ass:kappa}
		\sloppy	There is $\underline{\kappa} >0$ such that $
		\displaystyle\inf_{y\in E}\kappa(y) \geq \underline{\kappa} > 0$.
	\end{assumption}
	\begin{remark}
		This is a generalisation of the assumption in \cite[Corollary 8.2]{herdegen2023infinite_II}. This assumption also appears in \cite[Assumption 3.3]{guasoni2025variational} when the utility is CRRA. It is the key to showing the upper bound of the solution to the ODEs.
	\end{remark}

	\begin{lemma}[\bf Existence]\label{prop:existence}
		Suppose that Assumption~\ref{ass:kappa} holds and $R>0$. The boundary value problem
		\eqref{eq:BVP_h} admits at least one positive classical solution
		$h \in C^{2,\alpha}([-R,R])$. Moreover, any classical solution $h$ of \eqref{eq:BVP_h} satisfies $h>0$ on $[-R,R]$.
	\end{lemma}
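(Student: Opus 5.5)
The plan is the method of sub- and supersolutions for the semilinear problem \eqref{eq:BVP_h}, with constant barriers. Write the operator as $\mathcal L h := \frac{a^2}{2}h'' + \widetilde b h' - \kappa h + \eta h^p$. On $[-R,R]$ the coefficients $a^2,\widetilde b,\kappa$ are $C^{\alpha}$ (indeed $C^{1,\alpha}$) by \Cref{ass.market well-posed}. Moreover, $a^2\ge c_R>0$ by compactness and part (b) of that assumption, so the equation is uniformly elliptic there. Recall that $\eta>0$ and $p<1$ by \eqref{eq:eta_p_relation}, and that $\kappa\ge\underline\kappa>0$ by \Cref{ass:kappa}. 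Since $\kappa$ is continuous, it is bounded above on $[-R,R]$ by some $\overline\kappa_R$.

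\textbf{Barriers.} A constant $M$ satisfies $\mathcal L M = -\kappa M+\eta M^p$.
\begin{itemize}
\item Choose $M$ large enough that $\eta M^{p-1}\le\underline\kappa$ and $M\ge 1/\underline\kappa\ge\max_{\pm R}1/\kappa(\pm R)$. This is possible because $p<1$. Then $\mathcal L M\le0$, so $M$ is a supersolution lying above the boundary data.
\item Choose $m>0$ small enough that $\eta m^{p-1}\ge\overline\kappa_R$ and $m\le 1/\overline\kappa_R$. Then $m$ is a subsolution with $m\le 1/\kappa(\pm R)$.
\end{itemize}
If $p\le0$, the nonlinearity $h^p$ is singular or non-Lipschitz near $0$. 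To handle this, I would replace $h^p$ by a truncated nonlinearity $f(h)$: set $f(h):=\eta\min(\max(h,m),M)^p$. This $f$ is bounded and Lipschitz, and it agrees with $\eta h^p$ on $[m,M]$.

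\textbf{Existence.} I would run the standard monotone iteration. Fix a constant $\Lambda$ larger than $\overline\kappa_R$ plus the Lipschitz constant of $f$, so that $h\mapsto \Lambda h-\kappa h+f(h)$ is nondecreasing on $[m,M]$. Starting from $h_0=M$, solve the linear Dirichlet problems
$$\tfrac{a^2}{2}h_{k+1}''+\widetilde b h_{k+1}'-\Lambda h_{k+1} = -\Lambda h_k+\kappa h_k - f(h_k),\qquad h_{k+1}(\pm R)=1/\kappa(\pm R).$$
Each problem is uniquely solvable in $C^{2,\alpha}$ by linear Schauder theory, since the zeroth-order coefficient $-\Lambda$ is negative. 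The maximum principle gives $m\le h_{k+1}\le h_k\le M$. Schauder estimates bound $h_k$ uniformly in $C^{2,\alpha}$. By Arzelà--Ascoli the sequence converges in $C^2$ to a solution $h$ with $m\le h\le M$, and $h\in C^{2,\alpha}$. On $[m,M]$ the truncation is inactive, so $h$ solves \eqref{eq:BVP_h}. Alternatively, a Schauder or Leray--Schauder fixed-point argument on the truncated problem would give the same result.

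\textbf{Positivity of every classical solution.} This step is the main subtlety, because for $p\le0$ the term $h^p$ is only meaningful when $h>0$. I would read "classical solution" as requiring $h^p$ to be defined, so $h\ge0$ for $p\in(0,1)$ and $h>0$ when $p\le0$; the latter case is then trivial.
\begin{itemize}
\item \emph{Case $h$ vanishes in the interior.} Suppose $p\in(0,1)$ and $h$ vanishes somewhere. The boundary values $1/\kappa(\pm R)$ are positive, so the zero is an interior minimum $y_0$, where $h(y_0)=h'(y_0)=0$.
\item \emph{Local sign of $h''$.} The equation gives $\frac{a^2}{2}h'' = \kappa h-\eta h^p-\widetilde b h'$. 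Near $y_0$ we have $h^p\gg h$ because $p<1$, and the $h'$ term is small. Hence $h''<0$ wherever $h$ is small but positive near $y_0$.
\item \emph{Contradiction.} This concavity is incompatible with $h$ attaining the minimum $0$ at $y_0$ and rising on at least one side. I would make this rigorous with a comparison against $\epsilon$-subsolutions. Alternatively, integrate the ODE: along a path leaving $y_0$ where $h>0$, one has $(h')'\le -c\,h^p<0$ while $h'\ge0$ for $y$ slightly greater than $y_0$. This forces $h'<0$, a contradiction.
\end{itemize}
Hence $h>0$ on $[-R,R]$.
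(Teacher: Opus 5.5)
Your existence argument is correct and follows the paper's route. You use the same constant barriers as the paper's $\underline h\equiv\varepsilon$, $\overline h\equiv M$, and you write out the monotone iteration that proves Sattinger's sub/supersolution theorem, which the paper simply cites. Truncating $h^p$ to $[m,M]$ is a sensible way to show explicitly that the singularity of $h^p$ at $0$ does no harm when $p\le 0$.

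The gap is in the positivity of an arbitrary classical solution. You claim that near a zero $y_0$ the drift term $\widetilde b\,h'$ is negligible compared with $\eta h^p$, so that $h''<0$ pointwise. This is not justified. For a nonnegative $C^2$ function vanishing at $y_0$, regularity alone gives only $|h'|\lesssim h^{1/2}$, and this is not $o(h^p)$ when $p\ge 1/2$. Your alternative inequality $(h')'\le -c\,h^p$ simply drops the drift. As written, no contradiction is actually derived.

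The repair is short, and there are two ways to do it.
\begin{itemize}
\item \emph{Strong maximum principle (the paper's route).} Set $L:=\frac{a^2}{2}\partial_y^2+\widetilde b\,\partial_y-\kappa$. Then $Lh=-\eta h^p\le 0$, and the zeroth-order coefficient $-\kappa$ is $\le 0$. By the strong maximum principle (Gilbarg--Trudinger, Theorem 3.5), a nonpositive interior minimum forces $h$ to be constant, which contradicts $h(\pm R)>0$. This route does not even need $h^p\gg h$.
\item \emph{Integrating factor.} Absorb the drift with $\mu(y):=\exp\bigl(\int_0^y \frac{2\widetilde b(s)}{a(s)^2}\,ds\bigr)$, for which
\[
(\mu h')'=\frac{2\mu}{a^2}\bigl(\kappa h-\eta h^p\bigr).
\]
Take the largest zero $y_1\in(-R,R)$. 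There $h'(y_1)=0$. On some interval $(y_1,y_1+\delta)$ the right-hand side is negative, because $h>0$ is small there and $p<1$. Hence $h'<0$ on that interval, so $h$ becomes negative, a contradiction.
\end{itemize}

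Your reading that ``$h^p$ must be defined'' also does not dispose of $p=0$. This is exactly the linear regime \eqref{4677}, where $h^0\equiv 1$ and the reading imposes nothing. In that case, at a nonpositive interior minimum $y_0$ the equation gives $0=\frac{a^2}{2}h''(y_0)-\kappa(y_0)h(y_0)+\eta\ge\eta>0$, which is impossible.
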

	
	\begin{proof}
		Define the differential operator
		\begin{equation}\label{eq:nonlinear_operator}
			\mathcal{F}[u](y)
			:=
			\frac{a(y)^2}{2}\,u''(y)
			+
			\widetilde{b}(y)\,u'(y)
			-
			\kappa(y)\,u(y)
			+
			\eta\,u(y)^{p}.
		\end{equation}
		
		\noindent Set $\overline{\kappa}_R := \displaystyle\max_{y \in [-R,R]}\kappa(y)$. Choose $\varepsilon > 0$
		small enough that
		\begin{equation}\label{eq:epsilon_choice}
			\varepsilon
			\leq
			\min\!\left\{
			\frac{1}{\kappa(-R)},\;
			\frac{1}{\kappa(R)},\;
			\left(\frac{\eta}{\overline{\kappa}_R}\right)^{\!\frac{1}{1-p}}
			\right\},
		\end{equation}
		and let $\underline{h} \equiv \varepsilon$. The boundary inequality
		$\underline{h}(\pm R) \leq 1/\kappa(\pm R)$ holds by \eqref{eq:epsilon_choice}.
		Since $p < 1$, we have
		$\eta\varepsilon^{p-1} \geq \overline{\kappa}_R$, and therefore $
		\mathcal{F}[\h{1.3pt}\underline{h}\h{1.3pt}](y)
		=
		\bigl[-\kappa(y) + \eta\varepsilon^{p-1}\bigr]\varepsilon
		\geq
		(-\overline{\kappa}_R + \eta\varepsilon^{p-1})\varepsilon
		\geq 0.$ Choose $M > 0$ large enough that
		\begin{equation*} 
			M
			\geq
			\max\!\left\{
			\frac{1}{\kappa(-R)},\;
			\frac{1}{\kappa(R)},\;
			\left(\frac{\eta}{\underline{\kappa}}\right)^{\!\frac{1}{1-p}}
			\right\},
		\end{equation*}
		and let $\overline{h} \equiv M$. The boundary inequality
		$\overline{h}(\pm R) \geq 1/\kappa(\pm R)$ holds.
		Since $M \geq (\eta/\underline{\kappa})^{1/(1-p)}$, we have
		$\eta M^{p-1} \leq \underline{\kappa}$, and therefore $
		\mathcal{F}[\h{2pt}\overline{h}\h{2pt}](y)
		=
		\bigl[-\kappa(y) + \eta M^{p-1}\bigr]M
		\leq
		(-\underline{\kappa} + \eta M^{p-1})M
		\leq 0.$ Since $\varepsilon \leq M$, we have $\underline{h}\leq \overline{h}$.\smallskip

		The classical sub- and supersolution theorem \cite[Theorem~2.1]{SATTINGER} then yields a classical solution $h$ such that
		\[
		0<\underline h\leq h(y)\leq \overline h,
		\qquad \text{ for } y\in[-R,R].
		\]

		Suppose $h$ is any classical solution. The strong maximum principle \cite[Theorem 3.5]{GT77} then implies that
		$h$ cannot attain an interior minimum of zero unless $h \equiv 0$. Since
		$h(\pm R) = 1/\kappa(\pm R) > 0$, the solution cannot be identically zero, and
		hence $h > 0$ on $[-R,R]$. The positivity of $h$ and the Hölder continuity of the coefficients $a^2$,
		$\widetilde{b}$, and $\kappa$ from Assumption~\ref{ass.market well-posed} ensure that the nonlinear term $\eta h^p$ is
		Hölder continuous. Standard Schauder estimates \cite{GT77} then yield $h \in C^{2,\alpha}([-R,R])$. 
	\end{proof}

	\begin{lemma}[\bf Comparison principle and uniqueness]\label{prop:uniqueness and max p} Recall $\mathcal F$ defined in \eqref{eq:nonlinear_operator}. Suppose that Assumption~\ref{ass:kappa} holds and $R>0$. Let \(u,v\in C^2((-R,R))\cap C([-R,R])\) be functions such that
		\[u,\,v>0\quad\text{in }[-R,R],\quad
		\mathcal{F}[u]\geq 0,\,\,
		\mathcal F[v]\leq 0
		\quad\text{in }(-R,R),\quad \text{ and \(\quad u(y)\leq v(y)\) at \(y=\pm R\)}.
		\] 
		Then $
		u\leq v$ in $[-R,R]$. In particular, \eqref{eq:BVP_h} admits a unique classical solution and it is positive.
	\end{lemma}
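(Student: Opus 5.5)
The nonlinearity $\eta u^{p}$ with $p<1$ is sublinear, so I would prove the comparison by a scaling argument in the spirit of Brezis--Oswald, exploiting that $s\mapsto \eta s^{p-1}$ is strictly decreasing on $(0,\infty)$ because $\eta>0$ and $p<1$ by \eqref{eq:eta_p_relation}. Since $u,v$ are continuous and strictly positive on the compact interval $[-R,R]$, the quantity
\[
\tau^*:=\inf\{\tau\ge 1:\ u\le \tau v \text{ on }[-R,R]\}
\]
is finite. The goal is to show $\tau^*=1$; I will suppose instead that $\tau^*>1$ and derive a contradiction.

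Set $w:=\tau^* v$. For $\tau>1$ we have
\[
\mathcal F[\tau v]=\tau\Bigl(\tfrac{a^2}{2}v''+\widetilde b v'-\kappa v\Bigr)+\eta\tau^p v^p=\tau\,\mathcal F[v]+\eta(\tau^p-\tau)v^p<0
\]
in $(-R,R)$, because $\tau^p<\tau$ and $v>0$. So $w$ is a strict supersolution. Also $u\le w$ on $[-R,R]$, and at the endpoints $u(\pm R)\le v(\pm R)<w(\pm R)$. By minimality of $\tau^*$ and compactness, $z:=w-u\ge 0$ vanishes at some point $y_0$, and $y_0$ must be interior. At $y_0$ we have $z(y_0)=0$, $z'(y_0)=0$ and $z''(y_0)\ge 0$, while $u(y_0)=w(y_0)$. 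Subtracting the differential inequalities at $y_0$ gives
\[
0<\mathcal F[u](y_0)-\mathcal F[w](y_0)=-\tfrac{a(y_0)^2}{2}z''(y_0)-\widetilde b\,z'(y_0)+\kappa z(y_0)+\eta\bigl(u^p-w^p\bigr)(y_0)=-\tfrac{a^2}{2}z''(y_0)\le 0.
\]
This is a contradiction, so $\tau^*=1$, i.e.\ $u\le v$. The point is that the strict inequality comes from the scaling gain $\eta(\tau^p-\tau)v^p<0$, and this makes the sign of $\kappa$ irrelevant. Assumption~\ref{ass:kappa} is not even needed here beyond its use in \Cref{prop:existence}.

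The main obstacle is making sure the touching point is interior and that the second-order test is legitimate when only $u,v\in C^2((-R,R))\cap C([-R,R])$ is assumed. This is exactly why I keep the boundary strictness $u<\tau^*v$ at $\pm R$ when $\tau^*>1$, and why I evaluate derivatives only at interior points. If $\tau^*=1$ from the outset, there is nothing to prove.

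For uniqueness, given two classical solutions $h_1,h_2$ of \eqref{eq:BVP_h}, both are positive by \Cref{prop:existence} and have equal boundary data. Applying the comparison with $(u,v)=(h_1,h_2)$ and then $(h_2,h_1)$ gives $h_1=h_2$. Existence and positivity are already supplied by \Cref{prop:existence}.
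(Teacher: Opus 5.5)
Your proof is correct and is essentially the paper's argument. The paper takes $M=\max_{[-R,R]}u/v>1$ and evaluates at an interior maximiser $y_0$, where $u=Mv$, $u'=Mv'$ and $u''\le Mv''$; this yields $0\le \eta\, v(y_0)^p(M^p-M)<0$, which is exactly your touching-point computation with $\tau^*=M$ and the strict-supersolution gain $\eta(\tau^p-\tau)v^p<0$ (and, as you note, the sign of $\kappa$ plays no role there either).
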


	\begin{proof}
		Suppose, to the contrary, that $
		\displaystyle\max_{[-R,R]}\frac{u}{v}>1.$
		Since \(u\leq v\) at \(y=\pm R\), the maximum of \(u/v\) is attained at some
		interior point \(y_0\in(-R,R)\). Set
		\[
		M:=\frac{u(y_0)}{v(y_0)}>1.
		\]
		At \(y_0\), we have $u(y_0)=Mv(y_0)$, $u'(y_0)=Mv'(y_0)$, and $u''(y_0)\leq Mv''(y_0)$, where the last inequality follows from \((u/v)''(y_0)\leq0\) and
		\(v(y_0)>0\). Using \( \mathcal{F}[u](y_0)\geq0\) and
		\(\frac{a(y_0)^2}{2}\geq0\), we obtain
		\begin{align*}
			0
			\leq
			\mathcal{F}[u](y_0)
			\leq
			M\left[
			\frac{a(y_0)^2}{2}v''(y_0)
			+\widetilde b(y_0)v'(y_0)
			-\kappa(y_0)v(y_0)
			\right]
			+\eta M^p v(y_0)^p .
		\end{align*}
		Since \(\mathcal F[v](y_0)\leq0\), we have 
		\[
		0
		\leq
		\eta v(y_0)^p(M^p-M).
		\]
		But  \(M>1\) and \(\eta(p-1)<0\) from \eqref{eq:eta_p_relation}. Hence $
		\eta(M^p-M)<0$, which is a contradiction since \(v(y_0)>0\). Thus \(u\leq v\) on \([-R,R]\).
	\end{proof}

	To derive uniform (in $R$) bounds for solutions of \eqref{eq:BVP_h}, we need to impose the following assumption:
	
	\begin{assumption}[\bf Polynomial growth of coefficients]\label{ass:polynomial_growth}
		There exist constants $C_* > 0$ and
		$\alpha_* > 0$ such that $
		|a(y)| + |\widetilde{b}(y)| + \kappa(y)
		\leq C_*(1 + |y|^{\alpha_*})$ for all $y \in \mathbb{R}$.  
	\end{assumption}

	\begin{lemma}[\bf Bounds of $h_{R}$]\label{lem:barrier_bounds}
		Suppose that Assumptions \ref{ass:kappa} and \ref{ass:polynomial_growth} hold. Then the classical solution $h_{R}$ of
		\eqref{eq:BVP_h} satisfies
		\begin{equation}\label{eq:barrier_bounds}
			\underline{h}(y) \leq h_{R}(y) \leq \overline{h}(y),
			\qquad \text{for $y \in [-R,R]$ and $R>1$},
		\end{equation}
		where
		\begin{equation}\label{eq:upper_barrier_def}
			\overline{h}(y) =M
			:=
			\max\!\left\{
			\frac{1}{\underline{\kappa}},\,
			\left(\frac{\eta}{\underline{\kappa}}\right)^{\!\frac{1}{1-p}}
			\right\},\qquad
			\underline{h}(y)
			:=
			c_{0}\,(1 + y^{2})^{-\alpha^\dagger/2}.
		\end{equation} 
		Here $\alpha^\dagger > \max\bigl\{\alpha_*,\, 2\alpha_*/(1-p)\bigr\}$ and $c_{0} > 0$ is
		a sufficiently small constant specified in \eqref{eq:c0_choice}.
	\end{lemma}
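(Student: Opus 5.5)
Both bounds in \eqref{eq:barrier_bounds} will follow from the comparison principle of \Cref{prop:uniqueness and max p}, applied on $[-R,R]$ with the unique solution $h_R$ once in the role of $v$ and once in the role of $u$. It therefore suffices to check that $\overline h$ is a supersolution and $\underline h$ is a subsolution of $\mathcal F$ on $(-R,R)$. In addition, both must be positive and ordered correctly with respect to the boundary data $1/\kappa(\pm R)$.

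\textbf{Upper bound.} Take $v=\overline h\equiv M$ and $u=h_R$. Since $M\ge (\eta/\underline\kappa)^{1/(1-p)}$ and $p<1$, we have $\eta M^{p-1}\le\underline\kappa\le\kappa(y)$. Hence
\[
\mathcal F[M]=M\bigl(-\kappa+\eta M^{p-1}\bigr)\le0 .
\]
At the endpoints, $h_R(\pm R)=1/\kappa(\pm R)\le 1/\underline\kappa\le M$. Positivity of $h_R$ comes from \Cref{prop:existence}, so comparison gives $h_R\le M$.

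\textbf{Lower bound.} Take $u=\underline h=c_0\phi$ with $\phi(y)=(1+y^2)^{-\alpha^\dagger/2}$, and $v=h_R$. Differentiating gives
\[
|\phi'|\le \alpha^\dagger(1+y^2)^{-1/2}\phi,\qquad |\phi''|\le C_{\alpha^\dagger}(1+y^2)^{-1}\phi .
\]
Using Assumption~\ref{ass:polynomial_growth}, the linear part of $\mathcal F$ is then bounded by
\[
\Bigl|\tfrac{a^2}{2}\phi''+\widetilde b\phi'-\kappa\phi\Bigr|\le C(1+|y|)^{2\alpha_*}\phi ,
\]
where the $a^2$ term is the worst one. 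The nonlinear term is
\[
\eta(c_0\phi)^p=\eta c_0^p\phi\cdot\phi^{p-1}.
\]
Since $p<1$, $\phi^{p-1}=(1+y^2)^{\alpha^\dagger(1-p)/2}$, which grows like $|y|^{\alpha^\dagger(1-p)}$. The requirement $\alpha^\dagger>2\alpha_*/(1-p)$ gives $\alpha^\dagger(1-p)>2\alpha_*$, so $\phi^{p-1}$ beats $(1+|y|)^{2\alpha_*}$ for large $|y|$. More precisely, $(1+|y|)^{2\alpha_*}\phi^{1-p}$ is bounded on $\mathbb R$ by a constant $K$ depending only on $C_*,\alpha_*,\alpha^\dagger,p$. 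Then
\[
\mathcal F[c_0\phi]\ge c_0\phi\bigl(\eta c_0^{p-1}\phi^{p-1}-C(1+|y|)^{2\alpha_*}\bigr)\ge0
\]
as soon as $\eta c_0^{p-1}\ge CK$. Because $p<1$, this holds for all small $c_0$.

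\textbf{Boundary condition and choice of $c_0$.} We also need $c_0\phi(\pm R)\le 1/\kappa(\pm R)$. By the growth bound, $1/\kappa(\pm R)\ge C_*^{-1}(1+R^{\alpha_*})^{-1}$. Since $\alpha^\dagger>\alpha_*$, $\phi(R)(1+R^{\alpha_*})$ is bounded uniformly in $R>1$. So a further smallness condition on $c_0$, independent of $R$, suffices. This is the role of the hypothesis $\alpha^\dagger>\alpha_*$.

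I would set $c_0$ to be the minimum of the two thresholds; this is \eqref{eq:c0_choice}. Comparison then gives $\underline h\le h_R$.

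The main obstacle is the interior subsolution inequality. It requires tracking the polynomial weights carefully so that every constant is independent of $R$, and checking that the $a^2\phi''$ term, which carries weight $(1+|y|)^{2\alpha_*}$, is still dominated. I would handle this with the crude uniform bound above rather than splitting into the regions $|y|\le1$ and $|y|>1$.
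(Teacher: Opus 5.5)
Your proposal is correct and follows essentially the same route as the paper: the constant supersolution $M$, the subsolution $c_0(1+y^2)^{-\alpha^\dagger/2}$ whose linear part is bounded by a multiple of $(1+|y|)^{2\alpha_*}\phi$ and dominated by $\eta c_0^{p-1}\phi^{p-1}$ because $\alpha^\dagger(1-p)>2\alpha_*$, an $R$-independent smallness condition on $c_0$ at the boundary using $\alpha^\dagger>\alpha_*$, and two applications of \Cref{prop:uniqueness and max p}. One small correction: the $a^2\phi''$ term only carries weight $(1+|y|)^{2\alpha_*-2}$, so it is not always the worst term, but your uniform bound $(1+|y|)^{2\alpha_*}$ covers every term anyway.
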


	
	\begin{proof}
		The choice \eqref{eq:upper_barrier_def} gives $\eta M^{p-1} \leq \underline{\kappa}$, so 
		\begin{equation*}
			\mathcal{F}[\h{2pt}\overline{h}\h{2pt}](y)
			=
			\bigl[-\kappa(y) + \eta M^{p-1}\bigr]M
			\leq
			(-\underline{\kappa} + \eta M^{p-1})M
			\leq 0.
		\end{equation*}
		Moreover, $\overline{h}(\pm R)=M \geq 1/\underline{\kappa} \geq 1/\kappa(\pm R)$.\smallskip
		
		Let $\phi(y) := (1 + y^{2})^{-\alpha^\dagger/2}$ with $\alpha^\dagger > \max\{\alpha_*, 2\alpha_*/(1-p)\}$.
		A direct computation using \Cref{ass:polynomial_growth} shows that there
		exists a constant $\C{1}=\C{1}(C_*,\alpha_*,\alpha^\dagger)> 0$ such that
		\begin{equation}\label{eq:phi_linear_bound}
			\left|\frac{a(y)^{2}}{2}\,\phi''(y) + \widetilde{b}(y)\,\phi'(y)
			- \kappa(y)\,\phi(y)\right|
			\leq
			\C{1}\,\phi(y)\,(1 + |y|)^{2\alpha_*}.
		\end{equation}
		Since $\alpha^\dagger(1-p)/2 > \alpha_*$, we also have
		\begin{equation}\label{eq:phi_power_bound}
			\phi(y)^{p-1}
			=
			(1 + y^{2})^{\alpha^\dagger(1-p)/2}
			\geq
			c_1\,(1 + |y|)^{2\alpha_*}
		\end{equation}
		for some constant $c_1=c_1(\alpha^\dagger,\alpha_*) > 0$. Choose
		\begin{equation}\label{eq:c0_choice}
			c_{0}
			=
			\min\!\left\{
			1,\;
			\left(\frac{\eta\,c_1}{\C{1}}\right)^{\!\frac{1}{1-p}}, \; \left[ C_*\sup_{r\geq0} (1+r^2)^{-\alpha^\dagger/2}(1+r^{\alpha_*}) \right]^{-1}
			\right\},
		\end{equation}
		so that $\eta \,c_{0}^{\h{1pt} \h{1pt}p-1} c_1 \geq \C{1}$, and set
		$\underline{h} := c_{0}\phi$. Using \eqref{eq:phi_linear_bound} and then \eqref{eq:phi_power_bound}, together with $\eta c_{0}^{\h{1pt} p-1}c_1 \geq \C{1}$, we obtain
		\begin{align*}
			\mathcal{F}[\h{1.3pt}\underline{h}\h{1.3pt}](y)
			&=
			c_{0}\!\left[
			\frac{a(y)^{2}}{2}\,\phi''(y) + \widetilde{b}(y)\,\phi'(y)
			- \kappa(y)\,\phi(y)
			\right]
			+ \eta\,c_{0}^{\h{1pt} \h{1pt}p}\,\phi(y)^{\h{1pt} p}
			\notag\\ 
			&\geq
			c_{0}\,\phi(y)
			\left[
			-\C{1}(1 + |y|)^{2\alpha_*}
			+ \eta\,c_{0}^{\h{1pt} p-1}\,c_1(1 + |y|)^{2\alpha_*}
			\right]\notag\\
			&\geq 0.
		\end{align*}
		Since $\phi$ decays as $|y|^{-\alpha^\dagger}$ and $\kappa(y) \leq C_*(1 + |y|^{\alpha_*})$
		with $\alpha^\dagger > \alpha_*$, we obtain
		\begin{equation*}
			\underline{h}(\pm R)
			=
			c_{0}(1 + R^{2})^{-\alpha^\dagger/2}
			\leq
			\frac{1}{\kappa(\pm R)},
			\qquad \text{for } R > 0.
		\end{equation*} 
		
		By \Cref{prop:uniqueness and max p}, applied with $u=\underline h$ and $v=h_R$, we obtain \(\underline h\leq h_R\) on \([-R,R]\). Similarly, applying \Cref{prop:uniqueness and max p} with $
		u=h_R$ and $v=\overline h$, we obtain \(h_R\leq\overline h\) on \([-R,R]\).
	\end{proof}

	\begin{lemma}[\bf Passage to the whole line]\label{prop:pass_R_to_infty E=R} Suppose that Assumptions \ref{ass:kappa} and \ref{ass:polynomial_growth} hold. There exist a sequence \(R_n\to\infty\) and a positive function $
		h_*\in C^{2,\alpha}(\mathbb R)$ such that $
		h_{R_n}\to h_*$ in $C^2(\mathbb R)$ locally and \(h_*\) solves \eqref{eq: reduced HJB}. Moreover, it holds that
		$$
		c_{0}\,(1 + y^{2})^{-\alpha^\dagger/2} \leq h_*(y) \leq \max\!\left\{
		\frac{1}{\underline{\kappa}},\,
		\left(\frac{\eta}{\underline{\kappa}}\right)^{\!\frac{1}{1-p}}
		\right\},
		\qquad \text{for } y \in \mathbb{R},
		$$
		where $c_0$ and $\alpha^\dagger$ are given in \Cref{lem:barrier_bounds}.
	\end{lemma}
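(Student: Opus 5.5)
The plan is a standard compactness argument built on the uniform barriers of \Cref{lem:barrier_bounds}, followed by interior Schauder estimates and a diagonal extraction. Fix a compact interval $[-K,K]$ and consider $R>K+1$. By \Cref{lem:barrier_bounds}, $h_R$ satisfies
\[
\underline h\le h_R\le M \quad\text{on } [-K-1,K+1],
\]
with $\underline h(y)\ge c_0(1+(K+1)^2)^{-\alpha^\dagger/2}=:m_K>0$ there. The bounds are uniform in $R$. In particular, the nonlinear term $\eta h_R^p$ is uniformly bounded on $[-K-1,K+1]$, both for $p\ge0$ and for $p<0$, since $h_R$ is bounded away from zero.

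Next comes the derivative estimate. The operator $\tfrac{a^2}{2}\partial_{yy}+\widetilde b\,\partial_y-\kappa$ is uniformly elliptic on $[-K-1,K+1]$, because $a^2>0$ is continuous (\Cref{ass.market well-posed}(b)). Its coefficients are $C^{\alpha}$ there, since $\Sigma^{-1}$ is $C^{2,\alpha}$ and $\mu,b,r\in C^{1,\alpha}$.

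1. Writing the equation as $\tfrac{a^2}{2}h_R''+\widetilde b h_R'-\kappa h_R=-\eta h_R^p$, the interior $W^{2,q}$/$C^{1,\alpha}$ estimates (e.g.\ \cite[Theorem 9.11]{GT77} with Sobolev embedding, or simply an elementary ODE argument that integrates $h_R''$ with a bounded right-hand side and a bounded function) give a uniform bound on $\|h_R\|_{C^{1,\alpha}([-K-1/2,K+1/2])}$.
2. Since $s\mapsto s^p$ is smooth on $[m_K,M]$, the right-hand side $-\eta h_R^p$ is then uniformly bounded in $C^{\alpha}([-K-1/2,K+1/2])$.
3. The interior Schauder estimate \cite[Theorem 6.2]{GT77} then yields
\[
\|h_R\|_{C^{2,\alpha}([-K,K])}\le C_K,
\]
with $C_K$ independent of $R$.

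The main technical care is in this step: one must check that all constants depend only on $K$ through the barriers and the coefficients, never on $R$ or on the boundary data. The uniform positive lower bound from $\underline h$ is what makes this work.

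By Arzelà–Ascoli, $C^{2,\alpha}([-K,K])$ embeds compactly into $C^2([-K,K])$. Taking $K=1,2,\dots$ and extracting successively finer subsequences, a diagonal argument gives $R_n\to\infty$ and a function $h_*$ with $h_{R_n}\to h_*$ in $C^2([-K,K])$ for every $K$. The limit inherits the bound $\|h_*\|_{C^{2,\alpha}([-K,K])}\le C_K$, so $h_*\in C^{2,\alpha}(\mathbb R)$, understood locally. Passing to the limit pointwise in \eqref{eq:BVP_h} uses uniform convergence of $h,h',h''$ and continuity of $s\mapsto s^p$ on $[m_K,M]$, and shows that $h_*$ solves \eqref{eq: reduced HJB} on $\mathbb R$. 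Finally, the inequalities $c_0(1+y^2)^{-\alpha^\dagger/2}\le h_{R_n}(y)\le M$ hold for all $n$ with $R_n>|y|$, and they pass to the limit pointwise. This gives the stated two-sided bound and, in particular, $h_*>0$.
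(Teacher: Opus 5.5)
Your proposal is correct and follows essentially the same route as the paper. The barriers of \Cref{lem:barrier_bounds} give an $R$-independent positive lower bound on compacts, interior estimates then give $R$-independent $C^{2,\alpha}$ bounds, and a diagonal Arzel\`a--Ascoli argument produces $h_*$, with the equation and the two-sided bounds passing to the limit; your explicit intermediate $C^{1,\alpha}$ step via \cite[Theorem 9.11]{GT77} and Sobolev embedding simply spells out the ``bootstrapping'' the paper invokes before the interior Schauder estimate.
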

	
	\begin{proof}
		Let \(\mathcal K'\subset\subset \mathcal K\subset\subset\mathbb R\) be compact sets and take \(R>0\) sufficiently large such that \(\mathcal K\subset\subset(-R,R)\). By \Cref{lem:barrier_bounds}, there exists
		constant \(m_{\mathcal K}>0\), independent of \(R\), such that
		\[
		0<m_{\mathcal K}\leq h_R(y)\leq M,
		\qquad \text{for } y\in \mathcal K .
		\]
		Hence \(h_R^p\) is uniformly (in $R$) bounded on $\mathcal K$. Bootstrapping arguments and interior Schauder estimates \cite[Theorem 6.6]{GT77} imply that $\|h_R\|_{C^{2,\alpha}(\mathcal K')}
		\leq C_{\mathcal K',\mathcal K}$, where \(C_{\mathcal K',\mathcal K}\) is independent of \(R\). Taking \(R\to\infty\) along some sequence $\{R_n\}_{n\in \mathbb{N}}$ and using a
		diagonal compactness argument, we obtain a subsequence, still denoted by
		\(h_{R_n}\), and a function \(h_*\in C^{2,\alpha}(\mathbb R)\)
		such that $h_{R_n}\to h_*$ in $C^2(\mathbb R)$ locally. Passing to the
		limit in the equation gives the equation on $\mathbb{R}$. The bounds in \eqref{eq:barrier_bounds} also pass to the limit, so the proof is completed.
	\end{proof}

	\subsection{Neumann-Dirichlet boundary problem for \texorpdfstring{$E = \mathbb{R}_+$}{E=R+}}\label{sec. Robin-Dirichlet boundary problem}
	
	Throughout this section, we take \(E=\mathbb R_+\). This case differs from \(E=\mathbb R\) because the left endpoint \(0\) is a genuine boundary of the state space. Therefore, imposing a Dirichlet condition near \(0\) would add an artificial boundary value not present in the original half-line problem. We impose a Dirichlet condition only at the artificial right boundary \(R\). Near the finite boundary \(0\), we impose a homogeneous Neumann condition which does not prescribe an
	artificial value of \(h\). It is compatible with the uniform lower and upper
	barriers, and disappears from the interior limiting equation as
	\(\varepsilon\downarrow0\).\smallskip 
	
	We impose the following condition on the behaviour of coefficients near $y = 0$ and infinity:
	\begin{assumption}[\bf Growth of coefficients]\label{ass poly growth R_+}
		There exist constants $C_*>0$ and $\alpha_1,\alpha_2\ge 0$ such that $
		|a(y)|+|\widetilde b(y)|+|\kappa(y)|
		\le C_*\pigl(1+\max\{y^{-\alpha_1},y^{\alpha_2}\}\pigr)$ for all $y>0$.
	\end{assumption}
	For $0<\varepsilon< R$, we study the HJB equation on a bounded interval with mixed Neumann--Dirichlet boundary conditions:

	\begin{align}
		\begin{cases}
			&\dfrac{a(y)^2}{2}h''(y)+\widetilde b(y)h'(y)-\kappa(y)h(y)+\eta h(y)^p=0,
			\qquad y\in(\varepsilon,R),\\[0.8em]
			& h'(\varepsilon) =0,\qquad
			h(R)=\dfrac1{\kappa(R)}.
		\end{cases}
		\label{eq:eps_BVP}
	\end{align}

	\begin{lemma}[\bf Existence]
		\label{prop:existence on bdd domain when E=R_+}
		Suppose that Assumptions \ref{ass:kappa} and \ref{ass poly growth R_+} hold. Then there exist a constant $\varepsilon_0\in(0,1)$ such that, for every $R>1$ and every $\varepsilon \in (0,\varepsilon_0 )$, the problem \eqref{eq:eps_BVP} admits a positive classical solution $h_{\varepsilon,R}\in C^{2,\alpha}((\varepsilon,R))\cap C^1([\varepsilon,R]).$ Moreover,
		\begin{align}
			\delta_0\exp(-y-y^{-1})\le h_{\varepsilon,R}(y)\le \overline h	:=\max\left\{\frac1{\underline\kappa},
			\left(\frac{\eta}{\underline\kappa}\right)^{1/(1-p)}\right\},
			\qquad \text{for } y\in[\varepsilon,R],
			\label{2881}
		\end{align}
		where $\delta_0>0$ is independent of $R$ and $\varepsilon$. Moreover, any classical solution of \eqref{eq:eps_BVP} is positive on $[\varepsilon,R]$.
	\end{lemma}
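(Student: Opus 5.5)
The plan mirrors the proofs of \Cref{prop:existence} and \Cref{lem:barrier_bounds}: a sub/supersolution method adapted to the mixed Neumann--Dirichlet problem, followed by a maximum-principle argument for positivity.

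\textbf{Upper barrier.} The constant $\overline h$ from \eqref{2881} is a supersolution. It satisfies $\overline h'(\varepsilon)=0$, and $\mathcal F[\overline h]\le(-\underline\kappa+\eta\overline h^{p-1})\overline h\le0$ exactly as before. At the Dirichlet end, $\overline h\ge 1/\underline\kappa\ge 1/\kappa(R)$.

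\textbf{Lower barrier.} The candidate is $\underline h=\delta_0\phi$ with $\phi(y):=\exp(-y-y^{-1})$, chosen so that it vanishes faster than any power at $0$ and at $\infty$.

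Set $g(y):=-1+y^{-2}$, so that $\phi'=g\phi$ and $\phi''=(g^2+g')\phi=\bigl((1-y^{-2})^2-2y^{-3}\bigr)\phi$. By \Cref{ass poly growth R_+},
\[
\Bigl|\tfrac{a^2}{2}\phi''+\widetilde b\phi'-\kappa\phi\Bigr|\le C\,\phi(y)\bigl(1+y^{-2\alpha_1-4}+y^{2\alpha_2}\bigr).
\]
On the other hand, $\phi^{p-1}=\exp\bigl((1-p)(y+y^{-1})\bigr)$ dominates every power of $y$ and of $y^{-1}$. Hence $\phi^{p-1}\ge c_1\bigl(1+y^{-2\alpha_1-4}+y^{2\alpha_2}\bigr)$ for some $c_1>0$, and choosing $\delta_0\le(\eta c_1/C)^{1/(1-p)}$ gives $\mathcal F[\underline h]\ge0$ on $(0,\infty)$.

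At $R$ we need $\delta_0\phi(R)\le 1/\kappa(R)$. This follows from $\kappa(R)\le C_*(1+R^{\alpha_2})$ together with the rapid decay of $\phi$, after shrinking $\delta_0$ uniformly in $R>1$.

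At the Neumann end the correct sub-solution condition is $\underline h'(\varepsilon)\ge0$, i.e.\ the outward derivative $-\underline h'(\varepsilon)$ is nonpositive. Since $\phi'(\varepsilon)=(\varepsilon^{-2}-1)\phi(\varepsilon)>0$ for $\varepsilon<1$, this holds, and this is where $\varepsilon_0\in(0,1)$ enters. Finally, $\underline h\le\overline h$ after possibly shrinking $\delta_0$ once more.

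\textbf{Existence.} With ordered barriers satisfying the appropriate boundary inequalities for oblique/mixed conditions, I would invoke the sub/supersolution theorem for mixed boundary problems (Sattinger's framework covers Neumann-type conditions; alternatively a monotone iteration solving linear mixed problems for $u''$-operator plus $Ku$ with $K$ large). This produces $h_{\varepsilon,R}\in C^{2,\alpha}((\varepsilon,R))\cap C^1([\varepsilon,R])$ with $\underline h\le h_{\varepsilon,R}\le\overline h$, which is \eqref{2881}. The regularity comes from Schauder estimates, noting that $a^2>0$ on $[\varepsilon,R]$ so the equation is uniformly elliptic there.

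\textbf{Positivity of any classical solution.} The bound \eqref{2881} only covers the constructed solution, so this needs a separate argument. First, $h(R)>0$. Suppose $h$ attains a nonpositive minimum. If it does so at an interior point, the strong maximum principle argument of \Cref{prop:existence}, applied to the linear part (with the convention for $h^p$ at nonpositive values, as handled there), forces $h$ to be constant, which contradicts $h(R)>0$. If it does so at $\varepsilon$ with $h'(\varepsilon)=0$, Hopf's lemma gives the contradiction.

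\textbf{Main obstacle.} I expect the hard step to be the rigorous sub/supersolution theorem in the mixed setting, together with checking the sign convention at the Neumann endpoint: the boundary inequality $\underline h'(\varepsilon)\ge0$ must be the right one. If a direct citation is unavailable, I would reduce to a fixed-point/monotone iteration on the truncated nonlinearity $\eta\bigl(\min\{\max\{u,\underline h\},\overline h\}\bigr)^p$ and then remove the truncation using the comparison at the barriers.
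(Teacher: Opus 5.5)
Your proposal is correct and follows essentially the same route as the paper: the constant supersolution $\overline h$ and the subsolution $\delta_0\exp(-y-y^{-1})$, with $\delta_0$ chosen so that $\eta\delta_0^{p-1}\exp\bigl((1-p)(y+y^{-1})\bigr)$ dominates the polynomial bound on the linear part. It also matches the paper in the boundary checks $\delta_0\phi(R)\le 1/\kappa(R)$ and $\phi'(\varepsilon)>0$ for $\varepsilon<1$, in invoking an upper--lower solution theorem for the mixed problem, and in deriving positivity from the strong maximum principle plus Hopf's lemma at the Neumann end. The differences are only cosmetic: the paper cites Schmitt and Mawhin--Schmitt for the mixed-boundary theorem, and it splits positivity into the cases $p\in[0,1)$ and $p<0$, which your nonpositive-minimum argument treats together.
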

	
	\begin{proof}  	We first construct a subsolution. Set
		\[
		\phi(y):=\delta_0 \exp(-y-y^{-1}),\qquad\text{for } y>0.
		\]
		Write $\ell(y):=\log\phi(y)=\log\delta_0-y-y^{-1}$. Then
		\[
		\ell'(y)=-1+y^{-2},
		\qquad
		\ell''(y)=-2y^{-3},\qquad
		\phi'=\phi\ell',
		\qquad
		\phi''=\phi\bigl[(\ell')^2+\ell''\bigr].
		\]
		Hence
		\begin{align}
			\mathcal{F}[\phi](y)
			=
			\phi(y)
			\left\{
			\frac{a(y)^2}{2}\bigl[(\ell'(y))^2+\ell''(y)\bigr]
			+\widetilde b(y)\ell'(y)-\kappa(y)
			+\eta\phi(y)^{p-1}
			\right\}.
			\label{2908}
		\end{align}
		By Assumption \ref{ass poly growth R_+}, there exist constants $C_2>0$ and $N_1,N_2>0$ such that
		\begin{align}
			\left|
			\frac{a(y)^2}{2}\bigl[(\ell'(y))^2+\ell''(y)\bigr]
			+\widetilde b(y)\ell'(y)-\kappa(y)
			\right|
			\le
			C_2\left(1+y^{-N_1}+y^{N_2}\right),
			\qquad \text{for } y>0.
			\label{2919}
		\end{align}
		Since \(p<1\), the exponential factor in $\phi(y)^{p-1}$ in $\mathcal{F}[\phi]$ dominates both as \(y\to 0^+\) and
		as \(y\to\infty\). Thus there exists \(c_2>0\) such that
		\[
		\phi(y)^{p-1}
		=
		\delta_0^{p-1}\exp\bigl[(1-p)(y+y^{-1})\bigr]
		\ge
		c_2\delta_0^{p-1}\left(1+y^{-N_1}+y^{N_2}\right),
		\qquad \text{for } y>0.
		\]
		Since \(p-1<0\), we can always choose $\delta_0>0$ small enough that $
		\eta \delta_0^{p-1}c_2\ge C_2.$ Then $
		\mathcal{F}[\phi](y)\ge0$ for $y>0$, by \eqref{2908} and \eqref{2919}. \smallskip
		
		We next check the boundary inequalities. At the right boundary, Assumptions \ref{ass:kappa} and \ref{ass poly growth R_+} show that we can choose $\delta_0$ small enough, independent of $R>1$, such that
		\[
		\phi(R)=\delta_0 e^{-R-R^{-1}}
		\le \delta_0 e^{-R}
		\le \frac1{C_*(1+R^{\alpha_2})}
		\le \frac1{\kappa(R)},\qquad \text{for } R>1.
		\]
		At the left boundary, we have $\phi'(\varepsilon)=\delta_0(-1+\varepsilon^{-2}) \exp(-\varepsilon-\varepsilon^{-1})>0$ for $0<\varepsilon<\varepsilon_0$, by taking
		$\varepsilon_0\in(0,1)$ smaller. \smallskip
		
		Now set
		\[
		\overline h
		=
		\max\left\{
		\frac1{\underline\kappa},
		\left(\frac{\eta}{\underline\kappa}\right)^{1/(1-p)}
		\right\}.
		\]
		Then $ \phi\le\delta_0\leq \overline h$ after decreasing $\delta_0$ once more. Moreover,
		\[
		\mathcal{F}[\,\overline h\,](y)
		=
		-\kappa(y)\overline h+\eta\overline h^p
		=
		\overline h\bigl(-\kappa(y)+\eta\overline h^{p-1}\bigr)
		\le
		\overline h\bigl(-\underline\kappa+\eta\overline h^{p-1}\bigr)
		\le0.
		\]
		Also, $
		\overline h\ge \frac1{\underline\kappa}\ge\frac1{\kappa(R)} $ and $\overline h'=0$. Thus $\phi$ and $\overline h$ are, respectively, ordered sub- and supersolutions of \eqref{eq:eps_BVP}.\smallskip
		
		On the compact interval $[\varepsilon,R]$ the equation \eqref{eq:eps_BVP} is uniformly elliptic, because
		$a(y)^2>0$ and $a^2$ is continuous there. By the upper--lower solution theorem \cite[Theorem 6.3]{SCHMITT1978263} or \cite[Theorem~1]{MawhinSchmitt1984}, there exists a classical solution $h_{\varepsilon,R}$ satisfying \eqref{2881}.\smallskip
		
		Any classical solution $h_{\varepsilon,R}$ is positive on $[\varepsilon,R]$ by the maximum principle and Hopf's lemma when $p\in[0,1)$. If \(p<0\), the finiteness of \(h_{\varepsilon,R}^p\) implies that
		\(h_{\varepsilon,R}\) cannot vanish on \((\varepsilon,R]\). If \(h(\varepsilon)=0\), then \(h\) attains its minimum at the left endpoint. Hopf's lemma gives $
		h'(\varepsilon)>0$ since $h$ is not constant, contradicting the Neumann condition \(h'(\varepsilon)=0\). Hence
		\(h>0\) on \([\varepsilon,R]\). 
	\end{proof}
	
	\begin{lemma}[\bf Comparison principle and uniqueness]
		\label{prop:comparison_uniqueness R_+}
		Suppose that Assumptions~\ref{ass:kappa} and~\ref{ass poly growth R_+} hold. Recall $\mathcal F$ defined in \eqref{eq:nonlinear_operator}.
		Let $u,v\in C^2((\varepsilon,R))\cap C^1([\varepsilon,R])$ be functions such that
		\begin{equation}
			\label{eq:subsolution_ineq}
			u,\,v>0\quad\text{in }[\varepsilon,R],\quad
			\mathcal{F}[u]\geq 0,\,\,
			\mathcal F[v]\leq 0 \quad\text{in }(\varepsilon,R)
		\end{equation}
		with the boundary inequalities
		\begin{equation}
			\label{eq:sub_super_boundary_ineq}
			u(R)\le v(R),\,\, 
			u'(\varepsilon)
			\ge 0,\,\, 
			v'(\varepsilon)
			\le 0.
		\end{equation}
		Then $u\le v$ on $[\varepsilon,R]$. Consequently, the boundary value problem~\eqref{eq:eps_BVP} admits a unique classical solution in $C^2((\varepsilon,R))\cap C^1([\varepsilon,R])$ and it is positive.
	\end{lemma}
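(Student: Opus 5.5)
We adapt the ratio argument from the proof of \Cref{prop:uniqueness and max p}. The only new ingredient is the Neumann end, where we must rule out the maximum of $u/v$ occurring at $y=\varepsilon$.

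Suppose, to the contrary, that $M:=\max_{[\varepsilon,R]} u/v>1$, and let $y_0\in[\varepsilon,R]$ be a maximiser. Since $u(R)\le v(R)$, we have $y_0\neq R$. If $y_0\in(\varepsilon,R)$, the computation from \Cref{prop:uniqueness and max p} applies verbatim. At $y_0$ we have $u=Mv$, $u'=Mv'$ and $u''\le Mv''$. Combining $\mathcal F[u](y_0)\ge0$ with $\mathcal F[v](y_0)\le0$ gives
\[
0\le \eta v(y_0)^p(M^p-M)<0,
\]
since $\eta(p-1)<0$ by \eqref{eq:eta_p_relation}. This is a contradiction.

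The remaining case $y_0=\varepsilon$ is the main obstacle. Set $w:=u/v$. From the boundary conditions,
\[
w'(\varepsilon)=\frac{u'(\varepsilon)v(\varepsilon)-u(\varepsilon)v'(\varepsilon)}{v(\varepsilon)^2}\ge0,
\]
while maximality at the left endpoint forces $w'(\varepsilon)\le0$. Hence $w'(\varepsilon)=0$, and no immediate contradiction arises.

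To handle this, we derive a linear differential inequality for $w$ near $\varepsilon$. Write $u=wv$ and expand $\mathcal F[u]\ge0$, subtracting $w\,\mathcal F[v]\le0$ (valid because $w>0$). This gives
\[
\frac{a^2}{2}v\,w''+\Bigl(a^2v'+\widetilde b v\Bigr)w'+\eta v^p\,(w^p-w)\ge0 .
\]
On the set where $w>1$, the last term is strictly negative, so $L w:=\frac{a^2}{2}w''+\bigl(a^2v'/v+\widetilde b\bigr)w'>0$ there. Here $L$ is uniformly elliptic on $[\varepsilon,R]$ with bounded coefficients, because $v\in C^1([\varepsilon,R])$, $v>0$ and $a^2>0$ there.

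Consider a small interval $[\varepsilon,\varepsilon+\tau]$ on which $w>1$, so that $Lw>0$ there. If $w$ is constant $\equiv M$ there, then $Lw=0$, a contradiction. Otherwise, $w$ attains its maximum at the endpoint $\varepsilon$, and Hopf's boundary lemma (which in one dimension also follows from integrating the ODE inequality) gives $w'(\varepsilon)<0$. This contradicts $w'(\varepsilon)=0$.

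One technical point needs care: $u,v$ are only $C^1$ up to $\varepsilon$. Hopf's lemma in one dimension requires only $w\in C^2$ in the interior and $C^1$ at the endpoint, which we have. Alternatively, we can argue directly: let $\mu(y)=\exp\bigl(\int 2(a^2v'/v+\widetilde b)/a^2\bigr)$. Then $(\mu w')'>0$ on $(\varepsilon,\varepsilon+\tau)$, and $\mu w'\to0$ as $y\to\varepsilon$. Hence $w'>0$ just to the right of $\varepsilon$, contradicting maximality at $\varepsilon$. Therefore $M\le1$, that is, $u\le v$.

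For uniqueness, let $h_1,h_2$ be two classical solutions. Both are positive by \Cref{prop:existence on bdd domain when E=R_+}, and both satisfy $h_i'(\varepsilon)=0$ and $h_i(R)=1/\kappa(R)$. Applying the comparison in both directions gives $h_1=h_2$.
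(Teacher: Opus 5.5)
Your proof is correct and follows essentially the same route as the paper: a maximum-principle argument for a quotient of $u$ and $v$, where the interior case is ruled out by $\eta(p-1)<0$ and the Neumann endpoint is ruled out by Hopf's lemma against the sign $w'(\varepsilon)\ge 0$ that \eqref{eq:sub_super_boundary_ineq} forces. The paper works with $\log u-\log v$, which gives a single linear inequality with a negative zeroth-order coefficient, whereas you work with $u/v$ and obtain a strict inequality on $\{u/v>1\}$; your integrating-factor replacement for Hopf is a nice elementary touch but not a genuinely different method.
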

	\begin{proof} Set 
		\[ \mathbbm{u}:=\log u, \qquad \mathbbm{v}:=\log v, \qquad w:=\mathbbm{u}-\mathbbm{v} . \] 
		Then $\mathbbm{u},\mathbbm{v},w\in C^2((\varepsilon,R))\cap C^1([\varepsilon,R])$. Inequalities \eqref{eq:subsolution_ineq} imply 
		\begin{equation} \label{eq:U_sub_ineq} \frac{a(y)^2}{2}\bigl[\mathbbm{u}''(y)+(\mathbbm{u}'(y))^2\bigr] +\widetilde b(y)\mathbbm{u}'(y) -\kappa(y) +\eta e^{(p-1)\mathbbm{u}(y)} \ge 0, 
		\end{equation} 
		and 
		\begin{equation} \label{eq:V_super_ineq} \frac{a(y)^2}{2}\bigl[\mathbbm{v}''(y)+(\mathbbm{v}'(y))^2\bigr] +\widetilde b(y)\mathbbm{v}'(y) -\kappa(y) +\eta e^{(p-1)\mathbbm{v}(y)} \le 0. 
		\end{equation} 
		Subtracting \eqref{eq:V_super_ineq} from \eqref{eq:U_sub_ineq} gives 
		\begin{equation} \label{eq:w_comparison_ineq} \frac{a(y)^2}{2}w''(y) + \left\{ \widetilde b(y) +\frac{a(y)^2}{2}\bigl[\mathbbm{u}'(y)+\mathbbm{v}'(y)\bigr] \right\}w'(y) 
			+ \left[ \eta(p-1) 
			\int_0^1
			e^{(p-1)[\theta\mathbbm{u}(y)+(1-\theta)\mathbbm{v}(y)]}d\theta\right] w(y)\ge 0 \end{equation} for $y\in(\varepsilon,R)$. We claim that $w\le0$ on $[\varepsilon,R]$. \smallskip
		
		Suppose, to the contrary, that $ \max_{[\varepsilon,R]} w>0.$ By \eqref{eq:sub_super_boundary_ineq}, the positive maximum is attained at some point $y_0\in[\varepsilon,R)$. If $y_0\in(\varepsilon,R)$, then $w(y_0)>0$, $w'(y_0)=0$, and $w''(y_0)\le0$. Thus, evaluating \eqref{eq:w_comparison_ineq} at $y_0$ gives 
		\[ 0 \le \frac{a(y_0)^2}{2}w''(y_0) + \left[ \eta(p-1) 
		\int_0^1
		e^{(p-1)[\theta\mathbbm{u}(y_0)+(1-\theta)\mathbbm{v}(y_0)]}d\theta\right] w(y_0) <0, \] 
		which is impossible. Next, we consider the case where $y_0=\varepsilon$. Since \(w\) is nonconstant, Hopf's lemma gives $w'(\varepsilon)<0.$ This is a contradiction. Hence $
		\max_{[\varepsilon,R]}w\leq0.$

		
		Finally, if $h_1$ and $h_2$ are two classical solutions of \eqref{eq:eps_BVP}, applying the comparison result first with $u=h_1$, $v=h_2$, and then with $u=h_2$, $v=h_1$, yields the uniqueness. 
	\end{proof}

	\begin{lemma}[\bf Passage to the limit]\label{prop:pass_R_to_infty}
		Suppose that Assumptions \ref{ass:kappa} and \ref{ass poly growth R_+} hold. Then there exist a sequence
		$\{R_n\}_{n\in \mathbb{N}}$ tending to infinity and $
		h_\star\in C^{2,\alpha} ((0,\infty))$
		such that $h_\star$ solves the reduced HJB equation \eqref{eq: reduced HJB} and the unique solution $ h_{1/R_n,R_n}$ of \eqref{eq:eps_BVP} satisfies
		\[h_{1/R_n,R_n}\to h_\star
		\quad\text{locally in }C^2((0,\infty)).
		\]
		Moreover it satisfies
		\[
		\delta_0 e^{-y-y^{-1}}
		\le h_\star(y)
		\le
		\overline h
		:=
		\max\left\{
		\frac{1}{\underline\kappa},
		\left(\frac{\eta}{\underline\kappa}\right)^{1/(1-p)}
		\right\},
		\qquad \text{for     } y>0.
		\]
	\end{lemma}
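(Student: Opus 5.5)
The argument follows the proof of \Cref{prop:pass_R_to_infty E=R}, with the half-line geometry handled by exhausting $(0,\infty)$ with compact subintervals. Set $\varepsilon_n:=1/R_n$ with $R_n\uparrow\infty$, where $R_n$ is large enough that $1/R_n<\varepsilon_0$ and $R_n>1$. Then \Cref{prop:existence on bdd domain when E=R_+} and \Cref{prop:comparison_uniqueness R_+} give a unique positive solution $h_n:=h_{1/R_n,R_n}$ of \eqref{eq:eps_BVP} on $[1/R_n,R_n]$. By \eqref{2881} it satisfies
\[
\delta_0 e^{-y-y^{-1}}\le h_n(y)\le \overline h ,\qquad y\in[1/R_n,R_n].
\]
The key point is that $\delta_0$ and $\overline h$ do not depend on $n$. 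The whole proof rests on this uniformity.

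Fix compact intervals $\mathcal K'=[a',b']\subset\subset\mathcal K=[a,b]\subset\subset(0,\infty)$, and take $n$ large so that $\mathcal K\subset(1/R_n,R_n)$. On $\mathcal K$ we have $h_n\ge m_{\mathcal K}:=\delta_0\min_{\mathcal K}e^{-y-y^{-1}}>0$. Hence $\eta h_n^p$ is bounded uniformly in $n$, and this holds also when $p<0$, because of the positive lower bound. On $\mathcal K$ the operator is uniformly elliptic, since $a^2\ge\min_{\mathcal K}a^2>0$ by \Cref{ass.market well-posed}(b) and continuity, and the coefficients $a^2,\widetilde b,\kappa$ are $C^{\alpha}$ there.

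We then bootstrap. First, interior $W^{2,q}$ or $C^{1,\alpha}$ estimates give a uniform $C^{1,\alpha}(\mathcal K'')$ bound on an intermediate interval. Alternatively, in one dimension we can solve for $h_n''$ directly and use an interpolation inequality to bound $h_n'$. With that bound, $h_n^p$ is uniformly $C^{\alpha}$ on the intermediate interval, because $t\mapsto t^p$ is smooth on $[m_{\mathcal K},\overline h]$. The interior Schauder estimate \cite[Theorem 6.6]{GT77} then gives $\|h_n\|_{C^{2,\alpha}(\mathcal K')}\le C_{\mathcal K',\mathcal K}$, uniformly in $n$.

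Next we exhaust $(0,\infty)$ by $\mathcal K_j=[1/j,j]$. By Arzelà–Ascoli, a uniform $C^{2,\alpha}$ bound gives precompactness in $C^2$. A diagonal argument therefore produces a subsequence, still denoted $h_n$, and a limit $h_\star\in C^{2,\alpha}_{\rm loc}((0,\infty))$ with $h_n\to h_\star$ in $C^2$ on every $\mathcal K_j$. The Hölder seminorm bounds pass to the limit by lower semicontinuity.

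For any fixed $y>0$, the point $y$ lies in the interior of $(1/R_n,R_n)$ for all large $n$. Passing to the limit pointwise in the equation, using $C^2$ convergence together with continuity of $t\mapsto t^p$ on $[m,\overline h]$, shows that $h_\star$ solves \eqref{eq: reduced HJB} on $(0,\infty)$. The two-sided bounds hold pointwise for each $n$ with $n$-independent constants, so they pass to the limit as well.

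The boundary conditions play no role in the limit. The Neumann condition at $1/R_n$ and the Dirichlet condition at $R_n$ both escape to the ends of the half-line, and the argument only ever uses interior estimates on compacts. This is why the uniformity of the barriers in \Cref{prop:existence on bdd domain when E=R_+} was needed.

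The main obstacle is the bootstrapping step, that is, obtaining a gradient bound independent of $n$ before Schauder can be applied. I would handle it using the one-dimensional structure. From the equation, $|h_n''|\le C(1+|h_n'|)$ on $\mathcal K$, with $C$ depending only on $\mathcal K$, $\overline h$ and $m_{\mathcal K}$. Combining this with the uniform $L^\infty$ bound and the interpolation inequality $\|h'\|_{\infty}\le \epsilon\|h''\|_{\infty}+C_\epsilon\|h\|_\infty$ on the interval gives a uniform $C^{1}$ bound, which suffices to start the Schauder bootstrap.
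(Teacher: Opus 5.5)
Your proposal is correct and follows the paper's proof. The $R$- and $\varepsilon$-independent barriers from the existence lemma give a lower bound $m_{\mathcal K}>0$ on each compact $\mathcal K\subset\subset(0,\infty)$, which yields uniform control of $h^p$; interior Schauder estimates, a diagonal compactness argument, and passage to the limit in the equation and bounds then finish the proof exactly as in the whole-line case, and your one-dimensional interpolation step for the uniform $C^1$ bound just makes explicit the bootstrap the paper leaves implicit.
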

	
	\begin{proof}  Let $\mathcal K'\subset\subset \mathcal K \subset\subset (0,\infty)$ be compact sets. Let $R>1/\varepsilon_0>1$ be such that $\mathcal K'\subset\subset \mathcal K \subset\subset (1/R,R)$. Since $\phi(y)=\delta_0\exp(-y-y^{-1})$ is strictly positive on $\mathcal K$, \Cref{prop:existence on bdd domain when E=R_+} implies that there exists $m_{\mathcal K}>0$ such that
		\[
		0<m_{\mathcal K}\le h_{1/R,R}(y)\le\overline h,\qquad \text{for     } y\in \mathcal K.
		\]
		Therefore the nonlinear term \(h_{1/R,R}^p\) is uniformly (in $R$) bounded on \(\mathcal K\). We proceed similarly as in \Cref{prop:pass_R_to_infty E=R} to prove the lemma.
		\nihil{ 
			Moreover, on \(\mathcal K\), the coefficient \(a^2\) is positive and all coefficients are
			locally Hölder continuous. Bootstrapping arguments and interior Schauder estimates then give 
			\[
			\|h_{1/R,R}\|_{C^{2,\alpha}(\mathcal K')}
			\le C_{\mathcal K',\mathcal K},
			\]
			where \(C_{\mathcal K',\mathcal K}\) is independent of \(R\). A diagonal compactness argument gives a sequence \(R_n\to\infty\) and a function
			\(h_\star\in C^{2,\alpha} ((0,\infty))\) such that
			\[
			h_{R_n}\to h_\star
			\quad\text{locally in }C^2((0,\infty)).
			\]
			Passing to the limit in the equation and the bounds, we prove the lemma.}
	\end{proof}

	\section{Verification theorem}\label{sec. Verification theorem}
	
	In this section, we verify that the optimal strategies and value function can be recovered from the solution of the reduced HJB equation \eqref{eq: reduced HJB}, under some mild assumptions. Then we discuss how these assumptions can be checked using the properties of the solutions $h$ obtained in Lemmas \ref{prop:pass_R_to_infty E=R} and \ref{prop:pass_R_to_infty}.
	
	\subsection{The regime \texorpdfstring{$\vartheta \in (0,1)$}{theta in (0,1)}}
	
	\nihil{
		\begin{lemma}[\bf Uniqueness in the bounded positive class]
			\label{lem:whole_line_unique_bounded}
			Suppose Assumptions~\ref{assum. eta is a density} and~\ref{assu. Qualitative Verification}
			hold. Then the reduced HJB equation \eqref{eq: reduced HJB} has at most one classical solution $h\in C^2(E)$ such that $h>0$ on $E$ and it is uniform bounded on $E$.
		\end{lemma}
		
		\begin{proof}
			Let $h_1,h_2\in C^2(E)$ both satisfy \eqref{eq: reduced HJB} with $0<h_i<M$ on $E$.
			By symmetry it suffices to show $h_1\leq h_2$; the same argument with the two
			solutions interchanged then yields $h_2\leq h_1$, and hence $h_1\equiv h_2$.
			
			\smallskip
			\noindent\textit{Step 1 (Divergence form).}
			From the explicit formula~\eqref{eq. 1-d eta}, a direct computation gives
			$(a^2\eta)'=2\widetilde{b}\,\eta$. Multiplying \eqref{eq: reduced HJB} through by
			$\eta$ and using this identity, each solution satisfies
			\begin{equation}\label{eq:div_form}
				\tfrac{1}{2}\bigl(a^2\eta\,h_i'\bigr)'
				+\eta\bigl(-\kappa\,h_i+\eta_0\,h_i^p\bigr)=0,
				\qquad i=1,2.
			\end{equation}
			
			\smallskip
			\noindent\textit{Step 2 (Equation for the ratio).}
			Set $r:=h_1/h_2$. Multiplying \eqref{eq:div_form} for $h_1$ by $h_2$ and
			\eqref{eq:div_form} for $h_2$ by $h_1$, then subtracting, the $\kappa$-terms
			cancel. The identity
			\[
			h_2\bigl(a^2\eta\,h_1'\bigr)'-h_1\bigl(a^2\eta\,h_2'\bigr)'
			=\bigl(a^2\eta\,h_2^2\,r'\bigr)'
			\]
			then delivers
			\begin{equation}\label{eq:r_eqn}
				(D\,r')'+N=0,
			\end{equation}
			where
			\[
			D(y):=\tfrac{1}{2}a(y)^2\,\eta(y)\,h_2(y)^2>0,
			\qquad
			N(y):=\eta_0\,\eta(y)\,h_1(y)\,h_2(y)
			\bigl(h_1(y)^{p-1}-h_2(y)^{p-1}\bigr).
			\]
			Since $\eta_0(p-1)<0$, the function $h\mapsto\eta_0\,h^{p-1}$ is strictly
			decreasing on $(0,\infty)$, so
			\begin{equation}\label{eq:sign_N}
				N<0\qquad\text{on }\{h_1>h_2\}=\{r>1\}.
			\end{equation}
			
			\smallskip
			\noindent\textit{Step 3 (Cutoff functions via recurrence).}
			Write $s$ for the scale function from Assumption~\ref{assu. Qualitative Verification}
			(there denoted $p$; we use $s$ here to avoid a clash with the exponent above).
			Recurrence gives $s(E)=\mathbb{R}$. For each $n\geq1$, choose
			$\theta_n\in C_c^1(\mathbb{R})$ with $0\leq\theta_n\leq1$,
			$\theta_n\equiv1$ on $[-n,n]$, $\operatorname{supp}\theta_n\subseteq[-2n,2n]$,
			and $|\theta_n'|\leq C/n$, and set $\zeta_n:=\theta_n\circ s$. Then $\zeta_n$
			is compactly supported in $E$, with $\zeta_n\to1$ locally on $E$.
			
			A direct calculation from the definitions of $\eta$ and $s$ gives
			$a^2\eta\,s'=C_0$ for some constant $C_0>0$. Consequently, the change of
			variables $z=s(y)$ yields
			\begin{equation}\label{eq:zeta_to_zero}
				\int_E a^2\eta\,|\zeta_n'|^2\,dy
				\;=\;
				C_0\int_{\mathbb{R}}|\theta_n'(z)|^2\,dz
				\;\longrightarrow\; 0.
			\end{equation}
			
			\smallskip
			\noindent\textit{Step 4 (Energy estimate).}
			Set $\varphi:=(r-1)^+$. Testing \eqref{eq:r_eqn} against $\zeta_n^2\varphi$
			and integrating by parts (the compact support of $\zeta_n$ eliminates boundary
			terms),
			\begin{equation}\label{eq:ibp}
				\int_E\zeta_n^2\,D\,|\varphi'|^2\,dy
				\;=\;
				-2\int_E\zeta_n\,D\,\varphi\,\varphi'\,\zeta_n'\,dy
				+\underbrace{\int_E\zeta_n^2\varphi\,N\,dy}_{\leq\,0},
			\end{equation}
			where the last term is non-positive by~\eqref{eq:sign_N}. Dropping it,
			\[
			\int_E\zeta_n^2\,D\,|\varphi'|^2\,dy
			\;\leq\;
			2\Bigl|\int_E\zeta_n\,D\,\varphi\,\varphi'\,\zeta_n'\,dy\Bigr|.
			\]
			Young's inequality then gives
			\[
			\int_E\zeta_n^2\,D\,|\varphi'|^2\,dy
			\;\leq\;
			\frac{1}{2}\int_E\zeta_n^2\,D\,|\varphi'|^2\,dy
			+2\int_E D\,\varphi^2\,|\zeta_n'|^2\,dy,
			\]
			and absorbing the first term on the right,
			\[
			\int_E\zeta_n^2\,D\,|\varphi'|^2\,dy
			\;\leq\;
			4\int_E D\,\varphi^2\,|\zeta_n'|^2\,dy.
			\]
			Since
			\[
			D\varphi^2
			=\tfrac{1}{2}a^2\eta\,h_2^2\Bigl(\tfrac{h_1}{h_2}-1\Bigr)_+^2
			=\tfrac{1}{2}a^2\eta\,(h_1-h_2)_+^2
			\leq\tfrac{1}{2}M^2\,a^2\eta,
			\]
			applying~\eqref{eq:zeta_to_zero} gives
			\[
			\int_E\zeta_n^2\,D\,|\varphi'|^2\,dy
			\;\leq\;
			2M^2\int_E a^2\eta\,|\zeta_n'|^2\,dy
			\;\longrightarrow\; 0.
			\]
			Letting $n\to\infty$ and using $\zeta_n\to1$ locally, we conclude that
			$D|\varphi'|^2=0$ on every compact subset of $E$. Since $D>0$, this forces
			$\varphi'\equiv0$, so $\varphi$ is constant on $E$.
			
			\smallskip
			\noindent\textit{Step 5 (Ruling out $\varphi\equiv c_0>0$).}
			If $\varphi\equiv0$, then $r\leq1$, i.e., $h_1\leq h_2$, and we are done.
			Suppose for contradiction that $\varphi\equiv c_0>0$, so that
			$h_1=\lambda h_2$ on $E$ for some $\lambda:=1+c_0>1$. Substituting into the
			equation for $h_1$ and invoking the equation satisfied by $h_2$,
			\[
			0
			=\frac{a^2}{2}(\lambda h_2)''+\widetilde{b}(\lambda h_2)'
			-\kappa(\lambda h_2)+\eta_0(\lambda h_2)^p
			=\lambda\underbrace{\Bigl[\frac{a^2}{2}h_2''+\widetilde{b}h_2'
				-\kappa h_2\Bigr]}_{=-\,\eta_0 h_2^p}
			+\eta_0\lambda^p h_2^p
			=\eta_0\,h_2^p\,(\lambda^p-\lambda).
			\]
			Since $h_2>0$, this requires $\eta_0\,\lambda(\lambda^{p-1}-1)=0$. But $\lambda>1$
			and $\eta_0(p-1)<0$ together imply $\eta_0(\lambda^{p-1}-1)\neq0$, a
			contradiction. Hence $\varphi\equiv0$, giving $h_1\leq h_2$. As noted at the
			outset, the symmetric argument yields $h_2\leq h_1$, and therefore
			$h_1\equiv h_2$ on $E$.
		\end{proof}
	}

	
	\begin{theorem}[\bf Verification]\label{thm:verification}
		Let $h\in C^2(E;\mathbb{R}_+)$ be the classical solution of \eqref{eq: reduced HJB}. Assume:
		\begin{enumerate}[\upshape(i)]
			\item $ \vartheta \in (0,1)$;
			\item there is a unique solution $\overline{\mathbb{P}}$ to the martingale
			problem on $\mathbb{R}^n\times E$ for the generator
			\[
			\frac{1}{2}\sum_{i,j=1}^{n+1}
			\overline{A}_{ij}(\cdot)\p_{ij}+
			\sum_{i=1}^{n+1}\overline{b}_i(\cdot)\p_i, \h{5pt} \text{with} \h{5pt}
			\overline{A}:=
			\begin{pmatrix}\Sigma \!\!& \Upsilon\\ \Upsilon^\top\!\! & a^2\end{pmatrix}\h{5pt} \text{and} \h{5pt}\overline{b}:=
			\begin{pmatrix}
				\frac{1}{\gamma}\!\left(\mu+\chi\frac{h'}{h}\Upsilon\right)
				\\[10pt]
				\widetilde b
				+
				a^2\frac{h'}{h}
			\end{pmatrix};
			\]
			
			\item $\displaystyle\int_0^\infty h(Y_t)^{-\chi\frac{\psi-1}{1-\gamma}}\,dt\;=\;\infty$,\quad
			$\overline{\mathbb{P}}$-a.s.
			
		\end{enumerate}
		Then $\pi^*:E\to\mathbb{R}^n$ and $l^*:E\to\mathbb{R}$ defined by
		\begin{equation}\label{eq:feedback_controls}
			\pi^*(\cdot)
			\;:=\;
			\frac{1}{\gamma}\,\Sigma(\cdot)^{-1}\!\left[\mu(\cdot)\;+\;\chi\,\frac{h'(\cdot)}{h(\cdot)}\,\Upsilon(\cdot)\right],
			\qquad
			l^*(\cdot)
			\;:=\;
			\beta^\psi\,h(\cdot)^{-\chi\frac{\psi-1}{1-\gamma}}
		\end{equation}
		are the optimal feedback controls of the problem $\displaystyle\sup_{(\pi,l)\in\mathcal A_{EZ}(x,y)} J_0^{\,\xi,\pi,l}$ whose value is
		$\dfrac{x^{1-\gamma}}{1-\gamma}\,h(y)^\chi$. 
	\end{theorem}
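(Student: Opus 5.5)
We would split the argument into two parts. First, the candidate is Epstein--Zin evaluable with utility equal to $V(X^*_t,Y_t)e^{-\delta\vartheta t}$. Second, every other evaluable strategy does no better.

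Set $V(x,y)=\frac{x^{1-\gamma}}{1-\gamma}h(y)^\chi$ and $\ell(y):=l^*(y)=\beta^\psi h(y)^{-\chi(\psi-1)/(1-\gamma)}$. Let $X^*$ be the wealth under $(\pi^*,l^*)$, and define $J^*_t:=e^{-\delta\vartheta t}V(X^*_t,Y_t)$.

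\textbf{Step 1: dynamics of the candidate utility.} Apply Itô's formula to $J^*_t$. Because $h$ solves \eqref{eq: reduced HJB} and $(\pi^*,l^*)$ attain the supremum in \eqref{eq:HJB_EZ}, we expect
\[
dJ^*_t=-g_{EZ}(t,l^*_tX^*_t,J^*_t)\,dt+J^*_t\,dN_t ,
\]
where $N$ is a continuous local martingale with volatility $(1-\gamma)\pi^{*\top}\sigma\,dZ+\chi\frac{h'}{h}a\,dW$. A direct computation using the relation between $g_{EZ}$ and $\ell$ gives
\[
\frac{g_{EZ}(t,l^*X^*,J^*)}{J^*}=\vartheta\,\ell(Y_t)\cdot\frac{1}{\vartheta}(\text{const}) .
\]
More precisely, $g_{EZ}/J^*=\frac{(1-\gamma)}{1-1/\psi}\beta^\psi h^{-\chi(\psi-1)/(1-\gamma)}=\vartheta\,\ell(Y_t)$. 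Hence
\[
J^*_t=J^*_0\exp\!\Bigl(-\vartheta\!\int_0^t\ell(Y_s)\,ds\Bigr)\,\mathcal E(N)_t .
\]
The key point is that the stochastic exponential $\mathcal E(N)$ changes $\mathbb P$ into the measure under which $(R,Y)$ has the drift $\overline b$ of assumption (ii). Indeed, the Girsanov drift for $Y$ is $a^2\frac{h'}{h}$ after collecting the $\chi$, $\Upsilon$ terms with the choice \eqref{eq:chi_constant}. We will check this identity of drifts explicitly.

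\textbf{Step 2: true martingale property (main obstacle).} We must show that $\mathcal E(N)$ is a true $\mathbb P$-martingale. The plan is the standard localisation argument of \cite{PaoloScott12,guasoni2020consumption}. Let $\tau_n$ be the exit times of $Y$ from compacts. On $\{\tau_n>T\}$ the density $\mathcal E(N)_{T\wedge\tau_n}$ defines measures $\mathbb Q^n$ that agree with the solution of the martingale problem for $\overline A,\overline b$ up to $\tau_n$. Then
\[
\mathbb E[\mathcal E(N)_T]=\lim_n \mathbb Q^n(\tau_n>T)=\overline{\mathbb P}(\text{no explosion before }T)=1 ,
\]
by the uniqueness and nonexplosion in (ii). This is the delicate step, because $h'/h$ may be unbounded. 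It needs the continuity and well-posedness on the canonical space set up in Section~\ref{sec. Financial market}.

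\textbf{Step 3: the candidate is evaluable and its utility is $J^*$.} With $\overline{\mathbb P}$ in hand, Bayes' formula gives
\[
\mathbb E_t\!\left[\int_t^T g_{EZ}\,ds\right]=J^*_t\,\overline{\mathbb E}_t\!\left[\int_t^T\vartheta\ell(Y_s)e^{-\vartheta\int_t^s\ell}\,ds\right]=J^*_t\,\overline{\mathbb E}_t\!\left[1-e^{-\vartheta\int_t^T\ell}\right].
\]
By (iii) and monotone convergence, this tends to $J^*_t$, which yields \eqref{eq:EZ_utility eq} with finite absolute integral since $g_{EZ}$ has constant sign. For uniqueness of the utility process within $\mathcal A_{EZ}$, we would take any other solution $J$ and consider $J/J^*$. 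This ratio solves a linear-type equation under $\overline{\mathbb P}$ in the clock $\int\ell$, with the nonlinearity $v^q$ monotone since $q<0$. A comparison over the horizon, again using (iii) to kill the terminal term, should force $J=J^*$.

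\textbf{Step 4: optimality.} Take an arbitrary $(\pi,l)\in\mathcal A_{EZ}(x,y)$ with utility $J$. Itô's formula and the HJB inequality show that
\[
\widehat J_t:=e^{-\delta\vartheta t}V(X_t,Y_t)
\]
satisfies $d\widehat J_t\le -g_{EZ}(t,l_tX_t,\widehat J_t)\,dt+dM_t$. Compare this with $J_t=\mathbb E_t\int_t^\infty g_{EZ}(s,\ldots,J_s)\,ds$ using the concavity/monotonicity of $v\mapsto g_{EZ}$ (here $q<0$, $\vartheta<1$). A Gronwall-type comparison on $[0,\tau_n\wedge T]$ then gives $J_0\le\widehat J_0=V(x,y)$. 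The terminal term is handled by the sign of $V$, namely $(1-\gamma)V>0$, together with the integrability in \eqref{eq:EZ_utility eq}.
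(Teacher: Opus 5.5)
Your Steps 1--3 match the paper's proof: It\^o plus the HJB equality, the identity $g_{EZ}(t,C^*_t,J^*_t)=\vartheta\,l^*(Y_t)J^*_t$, the exit-time localisation giving $\mathbb E[\mathcal E(N)_T]=1$ from uniqueness and conservativeness in (ii), then Bayes' formula and (iii). For uniqueness of $J^*$ the ratio/clock device is unnecessary. Since $v\mapsto g_{EZ}(t,c,v)$ is decreasing when $q<0$, the comparison theorem \cite[Theorem~5.8]{herdegen2023infinite_II} applies directly to two uniformly integrable utility processes.

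The genuine gap is Step~4 when $\gamma>1$. You apply It\^o's formula to $\widehat J_t=e^{-\delta\vartheta t}V(X_t,Y_t)$, built from the arbitrary strategy's own wealth. For $\gamma>1$ this process is negative and unbounded below as $X_t\to0$, and you have no a priori control of it.
(a) Removing the localisation of the local martingale $M$ requires a uniformly integrable lower bound for $\widehat J$ at the localising times. Without it you do not even get the supersolution inequality between bounded stopping times.
(b) In the comparison, let $D=J-\widehat J$ and let $\tau$ be the first time $D\le0$. Monotonicity of $g_{EZ}$ in $v$ only yields $D_0\le\mathbb E\bigl[(J_T-\widehat J_T)^+\mathbf 1_{\{\tau>T\}}\bigr]\le\mathbb E\bigl[|\widehat J_T|\bigr]$, and nothing you have makes this vanish. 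The integrability in \eqref{eq:EZ_utility eq} controls $J$, not $\widehat J$. The sign $(1-\gamma)V>0$ helps only when $\gamma<1$. Bounding $|\widehat J_T|$ by $|J_T|$ would be exactly the conclusion you are trying to prove.
Gronwall is also not the right tool, since $g_{EZ}$ is not Lipschitz in $v$; the comparison has to rest on monotonicity and the stopping argument above.

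The missing idea is the wealth perturbation the paper borrows from \cite[Theorem~8.1]{herdegen2023infinite_II}. Set $X^\varepsilon=X^{\xi,\pi,l}+\varepsilon X^{*,1}$; this is the wealth of an admissible strategy whose consumption $C^\varepsilon=lX^{\xi,\pi,l}+\varepsilon l^*(Y)X^{*,1}$ dominates $lX^{\xi,\pi,l}$. Let $\Phi^\varepsilon_t=\mathcal V(t,X^\varepsilon_t,Y_t)$. Because $\mathcal V$ is increasing in wealth, $\Phi^\varepsilon_t\ge\varepsilon^{1-\gamma}J^{*,1}_t$, and the right-hand side is uniformly integrable with expectation tending to $0$ by Steps~2--3. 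This lower bound does two jobs: it justifies Fatou in removing the localisation, and it gives $\liminf_{T}\mathbb E[\Phi^\varepsilon_T]\ge0$. So $\Phi^\varepsilon$ is a genuine supersolution for $C^\varepsilon$, hence for $lX^{\xi,\pi,l}$, since $g_{EZ}$ is increasing in consumption. Comparison with $J^{\xi,\pi,l}$ then gives $J^{\xi,\pi,l}_0\le\frac{(x+\varepsilon)^{1-\gamma}}{1-\gamma}h(y)^\chi$, and letting $\varepsilon\downarrow0$ concludes. As written, your Step~4 is complete only for $\gamma<1$, where $\widehat J\ge0$ makes both the Fatou step and the terminal estimate trivial.
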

	
	\begin{proof}
		Define $\mathcal{V}(t,x,y)
		:=
		e^{-\delta\vartheta t}\,\dfrac{x^{1-\gamma}}{1-\gamma}\,h(y)^\chi$, $X^*:=X^{\xi,\pi^*,l^*}$, $C^*:=l^*(Y)X^*$, and
		\begin{align*}
			J^*_t
			\;:=\;
			\mathcal{V}(t,X^*_t,Y_t)
			\;=\;
			e^{-\delta\vartheta t}\,\frac{(X^*_t)^{1-\gamma}}{1-\gamma}\,h(Y_t)^\chi\quad \text{for each $t\geq0$.}
		\end{align*}
		
		\noindent\textbf{Step 1: Admissibility $(\pi^*,l^*)=(\pi^*(Y),l^*(Y)) \in \mathcal{A}(x,y)$.} As $h>0$ on $E$, and $Y$ is non-explosive and continuous by \Cref{ass.market well-posed}, we see that
		$$	\int_0^T \pig|\big[\pi^*(Y_t)\big]^\top \mu(Y_t)\pig|
		+\big[\pi^*(Y_t)\big]^\top \Sigma(Y_t)\pi^*(Y_t)
		+l^*(Y_t)\,dt < \infty, \text{ for every } T<\infty,\quad \mathbb P\text{-a.s.}$$ 
		As the wealth equation \eqref{eq. wealth eq} is linear, the wealth process \(X^{\h{.5pt}\xi,\pi^*,l^*}\) exists by the Doléans--Dade exponential formula and is positive for all times, $\mathbb{P}$-a.s. Hence $(\pi^*(Y),l^*(Y)) \in \mathcal{A}(x,y)$ by the definition in \eqref{def. A(x,y)}.
		
		\smallskip
		\noindent\textbf{Step 2: $J^*$ is the Epstein--Zin utility process for $(\pi^*,l^*)$.}
		The first-order condition for $l^*$ in \eqref{eq:l_star_chi} reads
		$\beta(l^*)^{-1/\psi}h^{-\frac{\chi}{1-\gamma}(1-1/\psi)}=1$.
		Inserting this into the aggregator $g_{EZ}$ in \eqref{eq:discounted_EZ_aggregator} and using $q=(\vartheta-1)/\vartheta$, a direct computation gives
		\begin{equation}\label{eq:aggregator_linear}
			g_{EZ}(t,C^*_t,J^*_t)\;=\;\vartheta\,l^*(Y_t)\,J^*_t.
		\end{equation} 
		Applying It\^{o}'s formula to $J^*_t = \mathcal{V}(t, X^*_t, Y_t)$ gives 
		\begin{align}\label{4633}
			dJ^*_t
			\;=\;
			\Bigl[\partial_t\mathcal{V}
			+ \mathcal{L}^{\pi^*,\,l^*}\mathcal{V}\Bigr](t,X^*_t,Y_t)\,dt
			\;+\; J^*_t\,dN_t,
		\end{align}
		where the second-order generator and $N$ are defined by
		\begin{align}
			\mathcal{L}^{\pi,l}V(t,x,y)
			&:=
			x\bigl[r(y)+\pi^\top\mu(y)-l\bigr]\partial_xV
			+b(y)\partial_yV \nonumber\\
			&\quad
			+\frac{1}{2}x^2\pi^\top\Sigma(y)\pi\,\partial_{xx}V
			+x\pi^\top\Upsilon(y)\,\partial_{xy}V
			+\frac{1}{2}a(y)^2\partial_{yy}V,
			\nonumber\\
			N_t
			&:=
			(1-\gamma)\int_0^t \big[\pi^*(Y_s)\big]^\top \sigma(Y_s)\,dZ_s
			\;+\;
			\chi\int_0^t\frac{h'(Y_s)}{h(Y_s)}\,a(Y_s)\,dW_s	\label{2997}
		\end{align}
		respectively. Since $(\pi^*,l^*)$ achieves the supremum in
		\eqref{eq:time_dependent_HJB_EZ} when $V=\mathcal{V}$ there, the HJB equation holds with equality at
		this control: $\partial_t\mathcal{V}
		\;+\;
		\mathcal{L}^{\pi^*,\,l^*}\mathcal{V}
		\;+\;
		g_{EZ}(t,C^*_t,J^*_t)
		\;=\; 0.$ Substituting this into \eqref{4633} and then applying
		\eqref{eq:aggregator_linear} yields the SDE
		\begin{equation}\label{eq:J_SDE}
			dJ^*_t
			\;=\;
			-\vartheta\,l^*(Y_t)\,J^*_t\,dt
			\;+\;
			J^*_t\,dN_t.
		\end{equation}
		Denote the stochastic exponential of $\{N_t\}_{t\geq 0}$ by $\mathcal E(N)_t
		:=
		\exp\left(N_t-\frac12\langle N\rangle_t\right).$ We first justify that \(\mathcal E(N)\) is a true martingale on every finite horizon and identify the corresponding change of measure. 
		Let $\{E_m\}_{m\in\mathbb N}$ be an increasing sequence of
		relatively compact open subsets of $E$ such that
		$\overline E_m\subset E_{m+1}$ and $\bigcup_mE_m=E$, and define
		\[
		\tau_m:=\inf\{t\geq0:Y_t\notin E_m\}.
		\]
		Fix \(T<\infty\). These are canonical exit times from a compact exhaustion of $E$ such that
		\(\{\mathcal E(N)_{t\wedge\tau_m}\}_{t\geq 0}\) is a \(\mathbb P\)-martingale on \([0,T]\), for each $m\in\mathbb{N}$. Define a probability measure
		\(\overline{\mathbb P}^{\,m}\) on \((\Omega,\mathcal F_T)\) by
		\[
		\overline{\mathbb P}^{\,m}(A)
		:=
		\mathbb E
		\left[
		\mathcal E(N)_{T\wedge\tau_m}\mathbf 1_A
		\right],
		\qquad \text{for any } A\in\mathcal F_T.
		\]
		Since \(\{\mathcal E(N)_{t\wedge\tau_m}\}_{t\geq 0}\) is a martingale on \([0,T]\), we have $
		\mathbb E [\mathcal E(N)_{T\wedge\tau_m}]=1,$ and hence \(\overline{\mathbb P}^{\,m}\) is indeed a probability measure.\smallskip

		We now compute the drift of the stopped coordinate process under
		\(\overline{\mathbb P}^{\,m}\). Under the original measure \(\mathbb P\), the coordinate process has the decomposition \[ dR_t=\mu(Y_t)\,dt+dM^R_t, \quad M^R_t:=\int_0^t\sigma(Y_s)\,dZ_s \h{10pt} \text{and} \h{10pt}
		dY_t=b(Y_t)\,dt+dM^Y_t, \quad M^Y_t:=\int_0^ta(Y_s)\,dW_s.
		\] 
		By Girsanov's theorem, the process $M^R_{t\wedge\tau_m} - \langle M^R,N\rangle_{t\wedge\tau_m}$ is a $\overline{\mathbb P}^{\,m}$-local martingale on $[0,T]$. Hence we may rewrite the stopped dynamics under \(\overline{\mathbb P}^{\,m}\) as 
		\[ dR_{t\wedge\tau_m} 
		=\mathbf 1_{\{t\le \tau_m\}}\mu(Y_t)\,dt + dM^R_{t\wedge\tau_m}
		= \mathbf 1_{\{t\le \tau_m\}}\mu(Y_t)\,dt + d\Bigl( M^R_{t\wedge\tau_m} - \langle M^R,N\rangle_{t\wedge\tau_m} \Bigr) + d\langle M^R,N\rangle_{t\wedge\tau_m}. \] 
		The second term on the right hand side is now an \(\overline{\mathbb P}^{\,m}\)-local martingale. 
		Since $d\langle M^R\rangle_t=\Sigma(Y_t)\,dt$ and $d\langle M^R,M^Y\rangle_t=\Upsilon(Y_t)\,dt$, we obtain \[ d\langle M^R,N\rangle_t = \left[ (1-\gamma)\Sigma(Y_t)\pi^*(Y_t) + \chi\frac{h'(Y_t)}{h(Y_t)}\Upsilon(Y_t) \right]dt. \] Consequently, under \(\overline{\mathbb P}^{\,m}\), and up to time \(\tau_m\), we use $\pi^* = \frac1\gamma \Sigma^{-1} \left( \mu+\chi\frac{h'}{h}\Upsilon \right)$ to see that the drift of the first \(n\) coordinates is 
		\[ \mu + (1-\gamma)\Sigma\pi^* + \chi\frac{h'}{h}\Upsilon = \frac1\gamma \left( \mu+\chi\frac{h'}{h}\Upsilon \right).\]  Similarly, up to time \(\tau_m\), the drift of the last coordinate becomes
		\[
		b
		+
		(1-\gamma)\Upsilon^\top\pi^*
		+
		\chi\frac{h'}{h}a^2
		=
		\widetilde b
		+
		\chi\frac{h'}{h}
		\left(
		a^2
		+
		\frac{1-\gamma}{\gamma}
		\Upsilon^\top\Sigma^{-1}\Upsilon
		\right)
		= 	\widetilde b
		+
		a^2\frac{h'}{h}
		\] 
		by the definition of $\widetilde b$ in \eqref{eq:p_eta_def} and the choice of \(\chi\) in \eqref{eq:chi_constant}. These two equalities give \(\overline b\).\smallskip

		The new local martingale parts are obtained from
		the old ones by subtracting finite-variation processes: $M^R_{t\wedge\tau_m}
		-
		\langle M^R,N\rangle_{t\wedge\tau_m}$ and $M^Y_{t\wedge\tau_m}
		-\langle M^Y,N\rangle_{t\wedge\tau_m}.$ Since quadratic variation and covariation are unaffected by adding or
		subtracting finite-variation processes, the quadratic covariation matrix of the coordinate process remains $\overline A$, as defined in (ii) of this theorem. Thus, under
		\(\overline{\mathbb P}^{\,m}\), the stopped coordinate process solves the
		stopped martingale problem associated with the generator in (ii) of this theorem up to time \(\tau_m\).\smallskip
		
		By the uniqueness of the martingale problem in (ii) of this theorem and the standard localisation theorem for martingale
		problems, the law of this stopped process agrees with the law of the corresponding stopped process under the unique solution \(\overline{\mathbb P}\). Since the martingale
		problem under \(\overline{\mathbb P}\) is conservative on every finite horizon, $ \overline{\mathbb P}(\tau_m\le T)\longrightarrow 0$ as $m \to \infty$. Since $
		\{\tau_m\le T\}
		=
		\{Y_{T\wedge\tau_m}\notin E_m\},$ the event $\{\tau_m\le T\}$ is measurable with respect to the
		stopped coordinate process	\(\{R_{t\wedge\tau_m},Y_{t\wedge\tau_m}\}_{0\le t\le T}\).  
		By uniqueness of the stopped martingale problem, the law of
		\(\{R_{t\wedge\tau_m},Y_{t\wedge\tau_m}\}_{0\le t\le T}\) under \(\overline{\mathbb P}^{\,m}\)
		coincides with its law under \(\overline{\mathbb P}\). Therefore $\overline{\mathbb P}^{\,m}(\tau_m\le T)
		=
		\overline{\mathbb P}(\tau_m\le T).$ Hence  
		\[
		\overline{\mathbb P}^{\,m}(\tau_m\le T)=\overline{\mathbb P}(\tau_m\le T)\longrightarrow 0\qquad
		\text{as $m\to\infty$.}
		\]
		Using the definition of \(\overline{\mathbb P}^{\,m}\), we have
		\[
		1
		=
		\mathbb E[\mathcal E(N)_{T\wedge\tau_m}]
		=
		\mathbb E
		\left[
		\mathcal E(N)_T\mathbf 1_{\{\tau_m>T\}}
		\right]
		+
		\overline{\mathbb P}^{\,m}(\tau_m\le T).
		\]
		Letting \(m\to\infty\), we obtain $\mathbb E[\mathcal E(N)_T]=1.$ Since
		\(T<\infty\) was arbitrary, \(\mathcal E(N)\) is a true martingale on every finite horizon. Moreover, for every \(A\in\mathcal F_T\),
		\[
		\begin{aligned}
			\mathbb E[\mathcal E(N)_T\mathbf 1_A] 
			=
			\lim_{m\to\infty}
			\mathbb E[\mathcal E(N)_{T\wedge\tau_m}
			\mathbf 1_{A\cap\{\tau_m>T\}}] 
			=
			\lim_{m\to\infty}
			\overline{\mathbb P}^{\,m}(A\cap\{\tau_m>T\})
			&=
			\lim_{m\to\infty}
			\overline{\mathbb P}(A\cap\{\tau_m>T\})  \\
			&=
			\overline{\mathbb P}(A).
		\end{aligned}
		\] 
		
		Finally, we solve \eqref{eq:J_SDE}. The Doléans--Dade exponential formula gives, for \(0\le t\le T<\infty\),
		\begin{align}
			J^*_T
			=
			J^*_t 
			\exp\left[-\int_t^T \vartheta l^*(Y_s)\,ds
			+N_T-N_t
			-\frac12\bigl(\langle N\rangle_T-\langle N\rangle_t\bigr)
			\right]
			=
			J^*_t
			\exp\left(
			-\int_t^T \vartheta l^*(Y_s)\,ds
			\right)
			\frac{\mathcal E(N)_T}{\mathcal E(N)_t}.
			\label{3190}
		\end{align}
		Changing measure to $\overline{\mathbb{P}}$:
		\begin{align}
			\mathbb{E}_t[J^*_T]
			&\;=\;
			J^*_t\,
			\overline{\mathbb{E}}_t\!\left[
			\exp\!\left(-\int_t^T\vartheta\,l^*(Y_s)\,ds\right)\right],
			\label{eq:Ehat_terminal}
			\\[4pt]
			\mathbb{E}_t\!\left[\int_t^T g_{EZ}(s,C^*_s,J^*_s)\,ds\right]
			&\;=\;
			J^*_t\,
			\overline{\mathbb{E}}_t\!\left[
			\int_t^T\vartheta\,l^*(Y_s)\,
			\exp\left(-\int_t^s\vartheta\,l^*(Y_u)\,du\right)\,ds\right],
			\label{eq:Ehat_integral}
		\end{align}
		where \eqref{eq:Ehat_integral} uses \eqref{eq:aggregator_linear}.
		Recognising the integrand in \eqref{eq:Ehat_integral} as
		$-\frac{d}{ds}\,\exp(-\int_t^s\vartheta\,l^*(Y_u)\,du)$ and integrating, then adding
		\eqref{eq:Ehat_terminal}, gives the finite-horizon consistency
		\begin{equation}\label{eq:consistency}
			J^*_t
			\;=\;
			\mathbb{E}_t\!\left[
			\int_t^T g_{EZ}(s,C^*_s,J^*_s)\,ds\;+\;J^*_T
			\right],
			\qquad 0\le t\le T<\infty.
		\end{equation}
		By the transversality condition~(iii) of this theorem and the fact that $Y$ is non-explosive, $\exp\pig(-\int_t^T\vartheta\,l^*(Y_s)\,ds\pig)\to0$ as
		$T\to\infty$ $\overline{\mathbb{P}}$-a.s., so $\mathbb{E}_t[J^*_T]\to0$ by
		\eqref{eq:Ehat_terminal} and the conditional dominated
		convergence. Passing $T\to\infty$
		in \eqref{eq:consistency},
		\[
		J^*_t
		\;=\;
		\mathbb{E}_t\!\left[\int_t^\infty g_{EZ}(s,C^*_s,J^*_s)\,ds\right].
		\]
		The absolute integrability of $g_{EZ}(s,C^*_s,J^*_s)$ follows from \eqref{eq:aggregator_linear} and \eqref{3190}. So $J^*$ is the utility process generated by $(\pi^*,l^*)$. We next establish the uniqueness.

		\smallskip
		\noindent\textbf{Step 3: Uniqueness of utility process associated with $(\pi^*,l^*)$.} For $i=1,2$, we let \(J^{(i)}\) be adapted càdlàg processes
		taking values in \((1-\gamma)\mathbb R_+\) and satisfying 
		\begin{align}
			J^{(i)}_t
			=
			\mathbb E_t
			\left[
			\int_t^\infty g_{EZ}\Big(s,C_s^*,J^{(i)}_s\Big)\,ds
			\right],
			\quad \text{for } t\ge0\h{5pt} \text{and} \h{5pt}
			\mathbb E\left[
			\int_0^\infty
			\left|g_{EZ}\Big(s,C_s^*,J^{(i)}_s\Big)\right|\,ds
			\right]<\infty.
			\label{3250}
		\end{align}
		We verify that the comparison theorem
		\cite[Theorem 5.8]{herdegen2023infinite_II} applies. Since \(C^*>0\) and \(\vartheta \in (0,1)\), it suffices to check for the uniform integrability of \(J^{(i)}\) and the sub/supersolution property. Set $
		f^{(i)}
		:=
		\int_0^\infty
		\left|
		g_{EZ}(u,C_u^*,J_u^{(i)})
		\right|\,du$ and $F_t^{(i)}:=\mathbb E_t[f^{(i)}].$ By \eqref{3250}, $f^{(i)}\in L^1$, and hence $F^{(i)}$
		is a uniformly integrable martingale. Moreover, for every finite
		stopping time $\tau$,
		\[
		|J_\tau^{(i)}|
		\le
		\mathbb E_\tau\left[
		\int_\tau^\infty
		\left|
		g_{EZ}(u,C_u^*,J_u^{(i)})
		\right|\,du
		\right]
		\le F_\tau^{(i)}.
		\]
		Therefore $J^{(i)}$ is uniformly integrable.\smallskip 
		
		Each \(J^{(i)}\) is both a subsolution and a supersolution in the sense of
		\cite[Definition 5.3]{herdegen2023infinite_II}. Indeed, by the
		integrability in \eqref{3250} and the above estimate,
		\[
		J^{(i)}_t
		+
		\int_0^t
		g_{EZ}\bigl(s,C_s^*,J_s^{(i)}\bigr)\,ds
		=
		\mathbb E_t
		\left[
		\int_0^\infty
		g_{EZ}\bigl(s,C_s^*,J_s^{(i)}\bigr)\,ds
		\right]
		\]
		is a uniformly integrable martingale. Therefore, for any bounded stopping times \(\tau_1\le \tau_2\), we use the optional stopping theorem
		to obtain,
		\[
		J^{(i)}_{\tau_1}
		=
		\mathbb E_{\tau_1}
		\left[
		J^{(i)}_{\tau_2}
		+
		\int_{\tau_1}^{\tau_2}
		g_{EZ}\bigl(s,C_s^*,J_s^{(i)}\bigr)\,ds
		\right]
		=
		\mathbb E_{\tau_1}
		\left[
		J^{(i)}_{\tau_2^+}
		+
		\int_{\tau_1}^{\tau_2}
		g_{EZ}\bigl(s,C_s^*,J_s^{(i)}\bigr)\,ds
		\right]
		\]
		since \(J^{(i)}\) is c\`adl\`ag. Furthermore, dominated convergence theorem and \eqref{3250} give
		\[
		\lim_{t\to\infty}\mathbb E\left[\big|J^{(i)}_{t+}\big|\right]=
		\lim_{t\to\infty}\mathbb E\left[\big|J^{(i)}_t\big|\right]
		\le
		\lim_{t\to\infty}\mathbb E
		\left[
		\int_t^\infty
		\left|
		g_{EZ}\bigl(s,C_s^*,J_s^{(i)}\bigr)
		\right|\,ds
		\right]
		= 0, 
		\] 
		because \(J^{(i)}\) is c\`adl\`ag. Applying \cite[Theorem 5.8]{herdegen2023infinite_II}, we conclude that $
		J^{(1)}_\tau=J^{(2)}_\tau$, $\mathbb P$-a.s. for every finite stopping time \(\tau\). Since both processes are c\`adl\`ag, it follows that \(J^{(1)}\) and \(J^{(2)}\) are indistinguishable.

		\smallskip
		\noindent\textbf{Step 4: Optimality over $\mathcal{A}_{EZ}(x,y)$.} Fix \((\pi,l)\in\mathcal A_{EZ}(x,y)\). Let \(X^{*,1}\) denote the wealth process generated by \((\pi^*,l^*)\) with unit initial wealth. For
		\(\varepsilon>0\), define
		\begin{align}
			X_t^\varepsilon
			:=
			X_t^{\xi,\pi,l}
			+
			\varepsilon X_t^{*,1}>0,
			\qquad
			\Phi_t^\varepsilon
			:=
			\mathcal V(t,X_t^\varepsilon,Y_t).
			\label{3850}
		\end{align}
		Moreover, \(X^\varepsilon\)
		is the wealth process associated with the portfolio and consumption--wealth ratio
		\begin{align}
			\pi_t^\varepsilon
			:=
			\frac{
				X_t^{\xi,\pi,l}\pi_t
				+
				\varepsilon X_t^{*,1}\pi^*(Y_t)
			}
			{X_t^\varepsilon},
			\qquad
			l_t^\varepsilon
			:=
			\frac{
				l_tX_t^{\xi,\pi,l}
				+
				\varepsilon l^*(Y_t)X_t^{*,1}
			}
			{X_t^\varepsilon}>0.
			\label{3817}
		\end{align}
		Applying It\^o's formula to $
		\Phi_t^\varepsilon
		=
		\mathcal V(t,X_t^\varepsilon,Y_t)$ and using the HJB equation gives
		\begin{align}
			d\Phi_t^\varepsilon
			\le
			-
			g_{EZ}\bigl(t,l_t^\varepsilon X_t^\varepsilon,\Phi_t^\varepsilon\bigr)\,dt
			+
			dM_t^\varepsilon,
			\label{3885}
		\end{align}
		where \(M^\varepsilon\) is a local martingale. Let \(\tau_1\le\tau_2\) be bounded
		stopping times. Localising the martingale \(M^\varepsilon\), applying optional
		sampling, and then removing the localisation by Fatou's lemma and conditional monotone convergence theorem gives
		\begin{align}
			\Phi_{\tau_1}^\varepsilon
			\ge
			\mathbb E_{\tau_1}^y
			\left[
			\Phi_{\tau_2}^\varepsilon
			+
			\int_{\tau_1}^{\tau_2}
			g_{EZ}\bigl(s,l_s^\varepsilon X_s^\varepsilon,\Phi_s^\varepsilon\bigr)\,ds
			\right].
			\label{3898}
		\end{align}
		Here the passage to the limit is justified as follows. The function
		\(x\mapsto\mathcal V(t,x,y)\) is increasing on \((0,\infty)\), and
		\(X_t^\varepsilon\ge \varepsilon X_t^{*,1}\). Hence $
		\Phi_t^\varepsilon
		\ge
		\mathcal V(t,\varepsilon X_t^{*,1},Y_t).$
		By Steps 2-3, the process on the right-hand side is uniformly integrable. Thus the localised terminal terms are bounded from below by a uniformly integrable
		family, which permits Fatou's lemma. In particular,
		\begin{align}
			\liminf_{T\to\infty}
			\mathbb E\left[\Phi_T^\varepsilon\right]
			\geq 	\liminf_{T\to\infty}
			\mathbb E\left[\mathcal V(T,\varepsilon X_T^{*,1},Y_T)\right]
			\ge 0
			\label{3917}
		\end{align}
		by Steps 2-3. Therefore \(\Phi^\varepsilon\) is a supersolution associated with the consumption \(l^\varepsilon X^\varepsilon\).\smallskip
		
		Since the aggregator $g_{EZ}$ is increasing in consumption and $
		l_t^\varepsilon X_t^\varepsilon
		=
		l_tX_t^{\xi,\pi,l}
		+
		\varepsilon l^*(Y_t)X_t^{*,1}
		\ge
		l_tX_t^{\xi,\pi,l},$ inequality \eqref{3898} implies that \(\Phi^\varepsilon\) is also a supersolution associated with the consumption \(lX^{\xi,\pi,l}\). On the other hand, since \((\pi,l)\in\mathcal A_{EZ}(x,y)\), \(J^{\xi,\pi,l}\) is both a subsolution and a
		supersolution associated with \(lX^{\xi,\pi,l}\). Moreover, the absolute
		integrability condition in the definition of \(\mathcal A_{EZ}(x,y)\) and
		\[
		\left|J_t^{\xi,\pi,l}\right|
		\le
		\mathbb E_t
		\left[
		\int_t^\infty
		\left|
		g_{EZ}\bigl(s,l_sX_s^{\xi,\pi,l},J_s^{\xi,\pi,l}\bigr)
		\right|\,ds
		\right]
		\]
		imply that \(J^{\xi,\pi,l}\) is uniformly integrable by similar reasons as in Step 3.\smallskip
		
		The comparison theorem \cite[Theorem 5.8]{herdegen2023infinite_II} applied to \(J^{\xi,\pi,l}\) as a subsolution and
		\(\Phi^\varepsilon\) as a supersolution gives $J_\tau^{\xi,\pi,l}
		\le
		\Phi_\tau^\varepsilon$ for every finite stopping time \(\tau\). In particular, taking \(\tau=0\), we obtain
		\[
		J_0^{\xi,\pi,l}
		\le
		\Phi_0^\varepsilon
		=
		\mathcal V(0,x+\varepsilon,y)
		=
		\frac{(x+\varepsilon)^{1-\gamma}}{1-\gamma}h(y)^\chi.
		\]
		Letting \(\varepsilon\downarrow0\), since \((\pi,l)\in\mathcal A_{EZ}(x,y)\) is arbitrary, we have $\sup_{(\pi,l)\in\mathcal A_{EZ}(x,y)}
		J_0^{\xi,\pi,l}
		\le
		\frac{x^{1-\gamma}}{1-\gamma}h(y)^\chi.$ The reverse inequality follows from Step \(2\), because \((\pi^*,l^*)\) belongs to \(\mathcal A_{EZ}(x,y)\) and its utility process is $
		J_t^*
		=
		\mathcal V(t,X_t^*,Y_t).$ This proves the optimality of \((\pi^*,l^*)\) over \(\mathcal A_{EZ}(x,y)\).\end{proof}
	
	\nihil{
		\begin{theorem}[Verification]\label{thm:verification}
			Let $h$ be the classical solution of the reduced HJB equation \eqref{eq: reduced HJB} and define
			\begin{equation}\label{eq:V_def}
				\mathcal{V}^*(x,y)
				\;:=\;
				\frac{x^{1-\gamma}}{1-\gamma}\,h(y)^{\chi}, 
			\end{equation}
			and
			\begin{align}\label{eq:feedback_controls}
				l^{*}(y)
				&:=
				\beta^{\psi}\,
				h(y)^{-\frac{\chi(\psi-1)}{1-\gamma}},
				&
				\pi^{*}(y)
				&:=
				\frac{1}{\gamma}\,\Sigma(y)^{-1}
				\!\left[
				\mu(y)
				+
				\chi\,\frac{h'(y)}{h(y)}\,\Upsilon(y)
				\right].
			\end{align}
			For the candidate controlled state process, define the continuous local
			martingale
			\begin{equation}\label{eq:N_def}
				N_t
				:=
				(1-\gamma)\int_0^t
				\pi^{*}(Y_s)^{\top}\sigma(Y_s)\,dZ_s
				+
				\chi\int_0^t
				\frac{h'(Y_s)}{h(Y_s)}\,a(Y_s)\,dW_s.
			\end{equation}
			Suppose the following conditions hold:
			\begin{enumerate}[(i).]
				\item\label{ass:vartheta}
				$0 < \vartheta := \dfrac{1-\gamma}{1-\psi^{-1}} < 1$;
				\item\label{ass:admissibility}
				$\pig(\pi^{*}(Y),l^{*}(Y)\pig) \in \mathcal{A}_{EZ}(x,y)$;
				\item\label{ass:true_mg}
				$\mathcal{E}(N)_t := \exp\bigl(N_t - \tfrac{1}{2}\langle N\rangle_t\bigr)$ is a true martingale on every finite horizon; let
				$\widehat{\mathbb{P}}$ be the probability measure defined on finite horizons by
				\[
				\frac{d\widehat{\mathbb{P}}}{d \mathbb{P}}\bigg|_{\mathcal{F}_{T}}
				=
				\mathcal{E}(N)_{T}.
				\]
				\item\label{ass:transversality}
				The transversality condition
				\begin{equation}\label{eq:transversality}
					\int_0^{\infty} l^{*}(Y_s)\,ds = \infty
					\qquad\widehat{\mathbb{P}}\text{-a.s.}
				\end{equation}
				holds.
			\end{enumerate}
			The stochastic integral in It\^{o}'s
			formula applied to $\Phi^{\pi,l}_t := \mathcal{V}(t,X^{\h{.5pt}\xi,\pi,l}_t,Y_t)$ is a
			true martingale after localisation, and
			\begin{equation}\label{eq:terminal_cond}
				\liminf_{T\to\infty}
				\mathcal{E}_t\!\left[\Phi^{\pi,l}_T\right]
				\geq 0.
			\end{equation}
			Then $(\pi^{*},l^{*})$ is optimal over $\mathcal{A}_{EZ}(x,y)$ and
			\begin{equation}\label{eq:value}
				\sup_{(\pi,l)\in\mathcal{A}_{EZ}(x,y)}
				J_0^{\pi,l}
				\;=\;
				V(x,y)
				\;=\;
				\frac{x^{1-\gamma}}{1-\gamma}\,h(y)^{\chi}.
			\end{equation}
		\end{theorem}
		
		\begin{proof}
			We write $X^{*}_t := X^{x,\pi^{*},l^{*}}_t$, $C^{*}_t := l^{*}(Y_t)X^{*}_t$,
			and
			\[
			J^{*}_t
			:=
			\mathcal{V}(t,X^{*}_t,Y_t)
			=
			e^{-\delta\vartheta t}\,
			\frac{(X^{*}_t)^{1-\gamma}}{1-\gamma}\,h(Y_t)^{\chi}.
			\]
			
			\smallskip\noindent
			\textbf{Step 1: Linearity of the aggregator along the candidate path.}
			The first-order condition for $l^{*}$ reads
			\begin{equation}\label{eq:foc_l}
				\beta\,(l^{*})^{-1/\psi}\,
				h^{-\frac{\chi}{1-\gamma}\!\left(1-\frac{1}{\psi}\right)}
				=
				1.
			\end{equation}
			Using \eqref{eq:foc_l} together with the definition of the aggregator $g_{EZ}$ and the identity $q = (\vartheta-1)/\vartheta$, a direct
			computation gives
			\begin{equation}\label{eq:aggregator_linear}
				g_{EZ}(t,C^{*}_t,J^{*}_t)
				=
				\beta\,e^{-\delta t}\,
				\frac{(l^{*}(Y_t)X^{*}_t)^{1-1/\psi}}{1-1/\psi}\,
				\bigl[(1-\gamma)J^{*}_t\bigr]^{q}
				=
				\vartheta\,l^{*}(Y_t)\,J^{*}_t.
			\end{equation}
			
			\smallskip\noindent
			\textbf{Step 2: $J^{*}$ is the Epstein--Zin utility process for
				$(\pi^{*},l^{*})$.}
			
			Applying It\^{o}'s formula to $\mathcal{V}(t,X^{*}_t,Y_t)$ and invoking the
			Hamilton--Jacobi--Bellman equality at the maximising controls $(\pi^{*},l^{*})$
			yields, in light of \eqref{eq:aggregator_linear},
			\begin{equation}\label{eq:J_SDE}
				dJ^{*}_t
				=
				-\vartheta\,l^{*}(Y_t)\,J^{*}_t\,dt
				+
				J^{*}_t\,dN_t,
			\end{equation}
			where $N$ is defined in \eqref{eq:N_def}. The stochastic exponential
			$\mathcal{E}(N)_t := \exp\bigl(N_t - \tfrac{1}{2}\langle N\rangle_t\bigr)$
			satisfies
			\[
			\begin{aligned}
				\langle N\rangle_t
				=
				\int_0^t
				\Bigl[&
				(1-\gamma)^2\,\pi^{*}(Y_s)^{\top}\Sigma(Y_s)\pi^{*}(Y_s)
				\\
				&+\,
				\chi^2\!\left(\frac{h'(Y_s)}{h(Y_s)}\right)^{\!2}
				a(Y_s)^2
				+
				2(1-\gamma)\chi\,\frac{h'(Y_s)}{h(Y_s)}\,
				a(Y_s)\,\pi^{*}(Y_s)^{\top}\sigma(Y_s)\rho(Y_s)
				\Bigr]ds.
			\end{aligned}
			\]
			Solving the linear SDE \eqref{eq:J_SDE} gives, for $0 \le t \le T$,
			\begin{equation}\label{eq:J_solution}
				J^{*}_T
				=
				J^{*}_t\,
				\exp\left(
				-\int_t^T \vartheta\,l^{*}(Y_s)\,ds
				\right)
				\frac{\mathcal{E}(N)_T}{\mathcal{E}(N)_t}.
			\end{equation}
			Since $\mathcal{E}(N)$ is a true martingale by assumption~\ref{ass:true_mg}, we may
			change measure to $\widehat{\mathbb{P}}$ and obtain
			\begin{align}
				\mathcal{E}_t[J^{*}_T]
				&=
				J^{*}_t\,
				\widehat{\mathbb{E}}_t\!\left[
				\exp\left(-\int_t^T \vartheta\,l^{*}(Y_s)\,ds\right)
				\right],
				\label{eq:Ehat_terminal}
				\\
				\mathcal{E}_t\!\left[\int_t^T g_{EZ}(s,C^{*}_s,J^{*}_s)\,ds\right]
				&=
				J^{*}_t\,
				\widehat{\mathbb{E}}_t\!\left[
				\int_t^T \vartheta\,l^{*}(Y_s)\,
				\exp\left(-\int_t^s \vartheta\,l^{*}(Y_u)\,du\right)ds
				\right],
				\label{eq:Ehat_integral}
			\end{align}
			\sloppy where the second identity uses \eqref{eq:aggregator_linear}. Recognising the
			integrand in \eqref{eq:Ehat_integral} as the derivative
			$-\tfrac{d}{ds}\exp\bigl(-\int_t^s\vartheta l^{*}(Y_u)\,du\bigr)$ and
			integrating, then adding \eqref{eq:Ehat_terminal} and \eqref{eq:Ehat_integral},
			yields the finite-horizon consistency relation
			\begin{equation}\label{eq:consistency}
				J^{*}_t
				=
				\mathcal{E}_t\!\left[
				\int_t^T g_{EZ}(s,C^{*}_s,J^{*}_s)\,ds
				+
				J^{*}_T
				\right],
				\qquad 0 \le t \le T < \infty.
			\end{equation}
			By the transversality condition~\eqref{eq:transversality},
			$\exp\bigl(-\int_t^T\vartheta\,l^{*}(Y_s)\,ds\bigr)\to 0$ as
			$T\to\infty$, $\widehat{\mathbb{P}}$-a.s.
			Hence $\mathcal{E}_t[J^{*}_T]\to 0$ as $T\to\infty$ by \eqref{eq:Ehat_terminal} and
			the monotone convergence theorem. Passing $T\to\infty$ in
			\eqref{eq:consistency} gives
			\[
			J^{*}_t
			=
			\mathcal{E}_t\!\left[
			\int_t^{\infty} g_{EZ}(s,C^{*}_s,J^{*}_s)\,ds
			\right],
			\]
			so $J^{*}$ is indeed the Epstein--Zin utility process generated by
			$(\pi^{*},l^{*})$.
			
			\smallskip\noindent
			\textbf{Step 3: Comparison with an arbitrary admissible control.}
			
			Let $(\pi,l)\in\mathcal{A}_{EZ}(x,y)$ and set $X_t := X^{\h{.5pt}\xi,\pi,l}_t$,
			$C_t := l_t X_t$, and $\Phi_t := \mathcal{V}(t,X_t,Y_t)$. Applying It\^{o}'s
			formula to $\Phi$ and using the fact that $\mathcal{V}$ satisfies the HJB inequality
			at the sub-optimal controls $(\pi,l)$, we obtain
			\[
			d\Phi_t
			\;\leq\;
			-g_{EZ}(t,C_t,\Phi_t)\,dt + dM_t,
			\]
			where $M$ is a local martingale. The localisation and terminal
			conditions \eqref{eq:terminal_cond} in the definition of $\mathcal{A}_{EZ}(x,y)$
			then give
			\begin{equation}\label{eq:Phi_supersolution}
				\Phi_t
				\;\geq\;
				\mathcal{E}_t\!\left[
				\int_t^T g_{EZ}(s,C_s,\Phi_s)\,ds + \Phi_T
				\right],
				\qquad 0 \le t \le T < \infty,
			\end{equation}
			so $\Phi$ is a supersolution of the recursive utility equation for the
			consumption stream $C$. The Epstein--Zin utility process $J^{\h{.5pt}\xi,\pi,l}$, by
			definition, satisfies
			\begin{equation}\label{eq:J_solution_arb}
				J^{\h{.5pt}\xi,\pi,l}_t
				=
				\mathcal{E}_t\!\left[
				\int_t^T g_{EZ}(s,C_s,J^{\h{.5pt}\xi,\pi,l}_s)\,ds + J^{\h{.5pt}\xi,\pi,l}_T
				\right],
				\qquad 0 \le t \le T < \infty,
			\end{equation}
			together with the infinite-horizon transversality condition.
			
			We now establish $J^{\h{.5pt}\xi,\pi,l}_t \le \Phi_t$ for all $t \ge 0$.
			We claim that $J^{\h{.5pt}\xi,\pi,l}_{\tau} \le \Phi_{\tau}$ for every finite
			stopping time $\tau$; it suffices to prove this for bounded $\tau$, since the
			general case follows by applying the result to $\tau \wedge n$ and
			letting $n\to\infty$.
			
			A key ingredient is the monotonicity of the aggregator in its third argument.
			Since $0<\vartheta<1$ by assumption~\ref{ass:vartheta}, we have
			$q = (\vartheta-1)/\vartheta < 0$, and a direct differentiation gives
			\[
			\partial_v\,g_{EZ}(t,c,v)
			=
			\beta\,e^{-\delta t}\,c^{1-1/\psi}\,q\vartheta\,
			\bigl[(1-\gamma)v\bigr]^{q-1}
			< 0
			\qquad\text{on }\{(1-\gamma)v > 0\}.
			\]
			Hence $g_{EZ}$ is strictly decreasing in $v$ wherever $(1-\gamma)v>0$.
			
			We first handle the case $\gamma < 1$, so that $J^{\h{.5pt}\xi,\pi,l} \ge 0$ and
			$\Phi \ge 0$. Suppose for contradiction that for some bounded stopping time
			$\tau$,
			\[
			\mathbb{P}\!\left(J^{\h{.5pt}\xi,\pi,l}_{\tau} > \Phi_{\tau}\right) > 0.
			\]
			Then there exists $\varepsilon > 0$ such that the event
			$A := \{J^{\h{.5pt}\xi,\pi,l}_{\tau} - \Phi_{\tau} \ge \varepsilon\} \in \mathcal{F}_{\tau}$
			has positive probability. Set $D_t := J^{\h{.5pt}\xi,\pi,l}_t - \Phi_t$ and define
			\[
			\sigma := \inf\{s \ge \tau : D_s \le 0\}.
			\]
			Since $J^{\h{.5pt}\xi,\pi,l}$ and $\Phi$ are c\`{a}dl\`{a}g, $D_{\sigma} \le 0$ on
			$\{\sigma < \infty\}$, and $D_s > 0$ on the stochastic interval $[\tau,\sigma)$.
			
			Fix $n > \|\tau\|_{\infty}$ and set $\sigma_n := \sigma \wedge n$.
			Subtracting \eqref{eq:Phi_supersolution} from \eqref{eq:J_solution_arb}
			on the interval $[\tau,\sigma_n]$, multiplying by $\mathbf{1}_A$, and using
			the strict decrease of $g_{EZ}$ in $v$ on $[\tau,\sigma)$ (where $D_s > 0$),
			we obtain
			\begin{equation}\label{eq:comparison_key}
				\mathcal{E}\!\left[\mathbf{1}_A D_{\tau}\right]
				\;\leq\;
				\mathcal{E}\!\left[\mathbf{1}_A D_{\sigma_n}\right]
				=
				\mathcal{E}\!\left[\mathbf{1}_A\mathbf{1}_{\{\sigma\le n\}}D_{\sigma}\right]
				+
				\mathcal{E}\!\left[\mathbf{1}_A\mathbf{1}_{\{\sigma > n\}}D_n\right].
			\end{equation}
			The first term on the right is non-positive since $D_{\sigma} \le 0$.
			On $\{\sigma > n\}$ we have $D_n > 0$, so $D_n = J^{\h{.5pt}\xi,\pi,l}_n - \Phi_n \le
			J^{\h{.5pt}\xi,\pi,l}_n$ (using $\Phi_n \ge 0$). Hence
			\[
			\mathcal{E}\!\left[\mathbf{1}_A D_{\tau}\right]
			\;\leq\;
			\mathcal{E}\!\left[J^{\h{.5pt}\xi,\pi,l}_n\right].
			\]
			The infinite-horizon transversality condition for $J^{\h{.5pt}\xi,\pi,l}$ gives
			$\limsup_{n\to\infty}\mathcal{E}[J^{\h{.5pt}\xi,\pi,l}_n] \le 0$, so the right-hand side
			vanishes as $n\to\infty$. But $D_{\tau} \ge \varepsilon$ on $A$,
			giving $\mathcal{E}[\mathbf{1}_A D_{\tau}] \ge \varepsilon\,\mathbb{P}(A) > 0$, a contradiction.
			
			For the case $\gamma > 1$, in which $J^{\h{.5pt}\xi,\pi,l} \le 0$ and $\Phi \le 0$, the
			same argument applies to the transformed processes $\widetilde{J} := -\Phi$
			and $\widetilde{\Phi} := -J^{\h{.5pt}\xi,\pi,l}$ with the transformed aggregator
			$\widetilde{g}_{EZ}(s,c,u) := -g_{EZ}(s,c,-u)$. The supersolution property
			of $\Phi$ becomes the subsolution property of $\widetilde{J}$, the solution
			property of $J^{\h{.5pt}\xi,\pi,l}$ becomes the supersolution property of
			$\widetilde{\Phi}$, the transformed aggregator remains strictly decreasing in
			$u$, and the transformed processes take values in $\mathbb{R}_{+}$, so the positive
			domain argument yields $\widetilde{J}_{\tau} \le \widetilde{\Phi}_{\tau}$,
			equivalently $J^{\h{.5pt}\xi,\pi,l}_{\tau} \le \Phi_{\tau}$.
			
			Hence, in both cases, $J^{\h{.5pt}\xi,\pi,l}_{\tau} \le \Phi_{\tau}$ $\mathbb{P}$-a.s.\ for
			every finite stopping time $\tau$. Taking $\tau = 0$ gives
			\[
			J_0^{\pi,l}
			\;\leq\;
			\mathcal{V}(0,x,y)
			=
			V(x,y)
			=
			\frac{x^{1-\gamma}}{1-\gamma}\,h(y)^{\chi}.
			\]
			Since the candidate control $(\pi^{*},l^{*})$ attains this bound by Step~2,
			it is optimal and
			\[
			\sup_{(\pi,l)\in\mathcal{A}_{EZ}(x,y)} J_0^{\pi,l}
			=
			\frac{x^{1-\gamma}}{1-\gamma}\,h(y)^{\chi},
			\]
			which completes the proof.
		\end{proof}
	}
	
	Let $h$ be the solution obtained in Lemma \ref{prop:pass_R_to_infty} when $E=\mathbb{R}_+$ or in Lemma \ref{prop:pass_R_to_infty E=R} when $E=\mathbb{R}$. We next prove that this $h$ satisfies the assumptions in \Cref{thm:verification}. To this end, we impose recurrence assumptions on the reference diffusion with drift $\widetilde{b}$, in that it has an invariant probability density and does not leave its domain. We introduce the reference diffusion
	\begin{equation} \label{eq. shifted state variable}
		d\widetilde{Y}_t=\widetilde{b}(\widetilde{Y}_t)dt+a(\widetilde{Y}_t)dW_t
	\end{equation} 

	\noindent Recall that, for an arbitrary $c \in E$, the scale function $\widetilde{s}:E \to \mathbb{R}$ of \eqref{eq. shifted state variable} is defined by
	\begin{equation} \label{eq. scale fucntion of shifted state variable}
		\widetilde{s}(y):=\int_c^y \exp\left(-2\int_c^u \frac{\widetilde{b}(s)}{a(s)^2}ds\right)du,
	\end{equation}
	\begin{assumption}[\bf Recurrence] \label{assu. Qualitative Verification}
		The scale function $\widetilde{s}$ defined in \eqref{eq. scale fucntion of shifted state variable} for \eqref{eq. shifted state variable} satisfies
		\begin{equation*}
			\lim_{y \to (\partial E)_+}\widetilde{s}(y)=+\infty,
			\qquad
			\lim_{y \to (\partial E)_-}\widetilde{s}(y)=-\infty;
		\end{equation*}
		where $(\partial E)_+,(\partial E)_{-}$ are the right and left endpoints of $E$, respectively. More precisely, $(\partial E)_+=\infty$. When $E=\mathbb{R}_+$, $(\partial E)_{-}=0$; when $E=\mathbb{R}$, $(\partial E)_{-}=-\infty$ .
	\end{assumption} 
	\begin{theorem}\label{thm verify h is optimal} Assume that $\vartheta\in(0,1)$, Assumptions~\ref{ass:kappa} and \ref{assu. Qualitative Verification} hold, and $h$ is a bounded positive solution of \eqref{eq: reduced HJB}. Then the controls defined in \eqref{eq:feedback_controls} corresponding to this $h$ are optimal and the value function is given by $\displaystyle\sup_{(\pi,l)\in\mathcal{A}_{EZ}(x,y)}
		J_0^{\h{.5pt}\xi,\pi,l} 
		\;=\;
		\frac{x^{1-\gamma}}{1-\gamma}\,h(y)^{\chi}.$
		In particular, $h$ can be chosen from:
		\begin{enumerate}[(i).]
			\item the limiting solution obtained in Lemma \ref{prop:pass_R_to_infty E=R} if $E=\mathbb{R}$ and \Cref{ass:polynomial_growth} holds;
			\item the limiting solution obtained in Lemma \ref{prop:pass_R_to_infty} if $E=\mathbb{R}_+$ and \Cref{ass poly growth R_+} holds. 
		\end{enumerate}  
	\end{theorem}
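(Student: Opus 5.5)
The plan is to reduce the statement to \Cref{thm:verification}. Hypothesis (i) there is assumed, so only (ii) and (iii) need checking for a bounded positive solution $h$. Then we check that the limits from \Cref{prop:pass_R_to_infty E=R} and \Cref{prop:pass_R_to_infty} are bounded and positive, which is immediate from their two-sided barriers: $h_*\le M$, $h_\star\le\overline h$, and strict positivity of the lower barriers.

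\textbf{Condition (ii).} Under $\overline{\mathbb P}$ the factor coordinate has drift $\widetilde b+a^2h'/h$ and diffusion $a$, and this drift does not involve $R$. The natural approach is to solve the one-dimensional SDE for $Y$ first and then define $R$ by integration. Since $a^2>0$ and $\widetilde b+a^2h'/h$ is locally Hölder on $E$ (because $h\in C^2$ and $h>0$), the one-dimensional martingale problem is well posed up to explosion by Engelbert--Schmidt or Stroock--Varadhan localisation. Uniqueness for the full $(n+1)$-dimensional problem then follows as in \Cref{ass.market well-posed}(c), via the Lévy construction of $(Z,W)$. The real content is \emph{non-explosion}, i.e. that $Y$ does not reach $\partial E$.

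The scale function of the tilted diffusion is
\[
\overline s(y)=\int_c^y \exp\Bigl(-2\int_c^u\frac{\widetilde b}{a^2}\,ds\Bigr)\frac{h(c)^2}{h(u)^2}\,du ,
\]
so $\overline s'=\widetilde s'\,h(c)^2/h^2$. Since $h\le \overline h$, we get $\overline s'\ge (h(c)/\overline h)^2\widetilde s'$. Hence \Cref{assu. Qualitative Verification} gives $\overline s(y)\to\pm\infty$ at the endpoints of $E$. By Feller's test, or more directly because an infinite scale at both ends implies recurrence, the process neither explodes nor hits $\partial E$. This proves (ii), and the conservativeness used in Step~2 of \Cref{thm:verification}.

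\textbf{Condition (iii).} This is the main obstacle. We need
\[
\int_0^\infty h(Y_t)^{-\chi(\psi-1)/(1-\gamma)}\,dt=\infty\quad \overline{\mathbb P}\text{-a.s.}
\]
The exponent $-\chi(\psi-1)/(1-\gamma)=p-1<0$, and $h\le\overline h$, so $l^*=\beta^\psi h^{p-1}\ge \beta^\psi\overline h^{\,p-1}>0$. The integrand is therefore bounded below by a positive constant, and the integral diverges on the event that $Y$ lives for all time. That event has full $\overline{\mathbb P}$-measure by the non-explosion just shown. So (iii) follows directly from boundedness of $h$ together with (ii), and the recurrence is really needed only to ensure the process lives forever.

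The step I expect to be delicate is therefore (ii). One has to check carefully that the scale comparison transfers \Cref{assu. Qualitative Verification} to the $h$-tilted diffusion in the half-line case. There $h'/h$ may blow up near $0$, but only local Hölder regularity and the bound $h\le\overline h$ are used, so the argument goes through. With (i)--(iii) verified, \Cref{thm:verification} yields optimality of \eqref{eq:feedback_controls} and the value $\frac{x^{1-\gamma}}{1-\gamma}h(y)^\chi$. Items (i) and (ii) of the statement then follow because the cited limit solutions are bounded and positive.
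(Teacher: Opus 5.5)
Your proposal is correct and follows essentially the same route as the paper. You reduce to \Cref{thm:verification}, transfer \Cref{assu. Qualitative Verification} to the $h$-tilted factor through the exact identity $\overline s'=\widetilde s'\,h(c)^2/h^2$ together with $h\le\sup h$ to get non-exit and well-posedness (building the first $n$ coordinates from the factor), and then obtain condition (iii) from $l^*=\beta^\psi h^{p-1}\ge\beta^\psi(\sup h)^{p-1}>0$; the only cosmetic fix is to write $\sup h$ rather than $\overline h$ in the scale comparison, since $h$ is a general bounded solution.
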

	\begin{proof}We only need to verify Assumptions (ii) and (iii) in \Cref{thm:verification}.
		
		\smallskip\noindent
		\textbf{Step 1: Verifying Assumption (ii).} Recall the projection $\Pi_2(x)=x^{n+1}$ for $x\in \mathbb{R}^{n+1}$. 
		The scale function $\overline{s} $ of the state $\overline Y$ satisfying $	d\,\overline Y_t=\Pi_2\pig(\overline b(\overline Y_t)\pig)dt
		+a(\overline Y_t)dW_t$ is
		\[
		\overline s (y)
		:=
		\int_c^y
		\exp\left(
		-2\int_c^u
		\left[
		\frac{\widetilde b(s)}{a(s)^2}
		+
		\frac{h'(s)}{h(s)}
		\right]ds
		\right)du
		= 
		\int_c^y
		\exp\left(
		-2\int_c^u
		\frac{\widetilde b(s)}{a(s)^2}ds
		\right)
		\frac{h(c)^2}{h(u)^2}\,du.
		\] 
		Thus, we have
		\[
		\overline s (y)
		\ge
		\frac{h(c)^2}{(\sup h)^2}\widetilde{s}(y),\quad\text{if \(y>c\);}\quad
		\overline s (y)
		\le
		\frac{h(c)^2}{(\sup h)^2}\widetilde{s}(y),\quad\text{if \(y<c\).}
		\] 
		By the recurrence assumption on \(\widetilde{s}\) and the fact that $h$ is uniformly bounded on $E$, it follows that
		\[
		\lim_{y\to(\partial E)_+}\overline s (y)=\infty,
		\qquad
		\lim_{y\to(\partial E)_-}\overline s (y)=-\infty.
		\]
		Hence the state $\overline Y$ does not exit \(E\) by \cite[Problem 5.27 and Theorem 5.29, Section C]{karatzas2014brownian}. Thus, the SDE $	d\, \overline Y_t=\Pi_2\pig(\overline b(\overline Y_t)\pig)dt
		+a(\overline Y_t)dW_t$ has a unique weak solution on $E$, by the regularity in \Cref{ass.market well-posed}. One can construct the solution of the first $n$-dimensional martingale problem by adding Brownian noise with the prescribed covariance and cross-covariance. Since $a\neq 0$ in $E$, the Brownian motion driving $\overline Y$ can be recovered from $\overline Y$. It implies that the martingale problem is wellposed.

		\smallskip\noindent
		\textbf{Step 2: Verifying Assumption (iii).} This is immediate because $h$ is bounded from above uniformly in $E$ and $\vartheta>0$.
	\end{proof} 
	
	\begin{proposition}[\bf Classical and Generalised 
		Epstein--Zin Value]
		\label{prop:classical_generalised_value_equality}
		Under the assumptions of the first part of \Cref{thm verify h is optimal}, the generalised utility process defined in 
		\Cref{def:generalised_EZ_solution} satisfies
		\begin{equation*}
			\sup_{(\pi,l)\in\mathcal{A}(x,y)}
			\widetilde J_0^{\,\xi,\pi,l}
			=
			\sup_{(\pi,l)\in\mathcal{A}_{EZ}(x,y)}
			J_0^{\,\xi,\pi,l}
			=\frac{x^{1-\gamma}}{1-\gamma}\,h(y)^{\chi}.
		\end{equation*}
	\end{proposition}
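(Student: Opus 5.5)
The second equality is exactly \Cref{thm verify h is optimal}, so it remains to show that $\sup_{\mathcal A(x,y)}\widetilde J_0^{\,\xi,\pi,l}$ equals $\mathcal V(0,x,y)=\frac{x^{1-\gamma}}{1-\gamma}h(y)^\chi$. We prove the two inequalities separately.

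\textbf{Lower bound.} We show that on $\mathcal A_{EZ}(x,y)$ the generalised and classical utilities coincide, i.e.\ $\widetilde J^{\,\xi,\pi,l}=J^{\,\xi,\pi,l}$; then $\widetilde J_0^{\,\xi,\pi^*,l^*}=\mathcal V(0,x,y)$. This follows from the construction in \cite[Theorems~6.5, 6.9]{herdegen2023infinite_II}, where the generalised utility is independent of the approximating sequence. Take the constant sequence $C^n\equiv C=lX^{\xi,\pi,l}$, which satisfies (a)--(c) of \Cref{def:generalised_EZ_solution} by evaluability. For $(\pi^*,l^*)$ this is available because $C^*>0$, and Steps~2--3 of \Cref{thm:verification} give existence, uniqueness and integrability.

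\textbf{Upper bound.} Fix $(\pi,l)\in\mathcal A(x,y)$ and $\varepsilon>0$. As in Step~4 of \Cref{thm:verification}, define $X^\varepsilon=X^{\xi,\pi,l}+\varepsilon X^{*,1}$ and $\Phi^\varepsilon_t=\mathcal V(t,X^\varepsilon_t,Y_t)$. The argument there, namely It\^o's formula, the HJB inequality, localisation, and Fatou's lemma using the lower bound $\Phi^\varepsilon\ge\mathcal V(t,\varepsilon X^{*,1}_t,Y_t)$, used nothing about evaluability of $(\pi,l)$. Hence $\Phi^\varepsilon$ is a supersolution for the consumption $lX^{\xi,\pi,l}+\varepsilon l^*(Y)X^{*,1}$, and therefore for any smaller consumption.

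Now pick a monotone approximating sequence $C^n$ for $C=lX^{\xi,\pi,l}$ as in \Cref{def:generalised_EZ_solution}. We choose $C^n$ dominated by $C+\varepsilon l^*(Y)X^{*,1}$; this is possible when $\gamma>1$ by taking, e.g., $C^n=C+\frac{\varepsilon}{n}l^*(Y)X^{*,1}$ truncated suitably so that (b)--(c) hold. Each $C^n$ has a unique utility process $J^n$, which is a uniformly integrable subsolution as in Step~3. Applying the comparison theorem \cite[Theorem~5.8]{herdegen2023infinite_II} gives $J^n_0\le\Phi^\varepsilon_0=\mathcal V(0,x+\varepsilon,y)$. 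Letting $n\to\infty$ yields $\widetilde J^{\,\xi,\pi,l}_0\le\mathcal V(0,x+\varepsilon,y)$, and then $\varepsilon\downarrow0$ gives the bound.

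\textbf{Main obstacle.} The delicate step is constructing the approximating sequence so that each $C^n$ is simultaneously classically evaluable, with (b)--(c) holding, and dominated by the perturbed consumption. This is where $\gamma<1$ and $\gamma>1$ differ.

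For $\gamma<1$ we need a nondecreasing sequence, and truncation such as $C^n=\min(C,\,n\,l^*(Y)X^{*,1})$ is natural. Evaluability should then follow from domination by a multiple of the candidate consumption. We would use homotheticity together with the comparison and existence results of \cite{herdegen2023infinite_II} for consumption streams sandwiched between $0$ and $kC^*$.

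For $\gamma>1$ the sequence must be nonincreasing, and here the paper notes that geometric-Brownian domination is unavailable. The plan is instead to take $C^n=C+\frac1n C^*$, which is bounded below by a multiple of $C^*$. Evaluability of $C^n$ would then be argued via a sandwich: the lower bound $\frac1n C^*$ yields utility bounded below by $\mathcal V(t,\frac1n X^{*,1},Y_t)$. Each $J^n$ is then compared directly with $\Phi^{1/n}$, so that the limit $\widetilde J_0\le \lim_n\mathcal V(0,x+1/n,y)$ follows without needing a fixed $\varepsilon$.
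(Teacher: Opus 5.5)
Your lower bound (the constant approximating sequence, which amounts to \cite[Theorem~6.7]{herdegen2023infinite_II}) and the supersolution property of $\Phi^\varepsilon$ are fine. For $\gamma<1$ your upper bound also works, and more easily than you expect. Any nondecreasing approximating sequence $C^n\uparrow C$ furnished by \cite[Theorems~6.5, 6.9]{herdegen2023infinite_II} automatically satisfies $C^n\le C\le C^\varepsilon$. Comparing each $J^n$ with $\Phi^\varepsilon$ and letting $n\to\infty$ therefore gives $\widetilde J^{\,\xi,\pi,l}_0\le\Phi^\varepsilon_0$. No truncation by $nC^{*,1}$ and no separate evaluability argument is needed. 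This is equivalent to the paper's use of the minimal-supersolution property of $\widetilde J^{\,\xi,\pi,l}$.

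The genuine gap is the case $\gamma>1$. There the approximating sequence must be nonincreasing, so $C^n\ge C$. Your comparison with $\Phi^\varepsilon$ then needs a sequence that is both dominated by $C^\varepsilon$ and classically evaluable in the sense of (b)--(c), and you never show that $C+\frac{\varepsilon}{n}C^{*,1}$ (or $C+\frac1n C^*$) admits a utility process at all. The ``sandwich'' only shows that such a process, if it exists, lies above $\mathcal V(t,\frac{\varepsilon}{n}X^{*,1}_t,Y_t)$; this gives neither existence nor integrability of the aggregator. The phrase ``truncated suitably'' is unspecified, and truncating below $C$ is incompatible with $C^n\downarrow C$. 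Since $(\pi,l)\in\mathcal A(x,y)$ is arbitrary and $g_{EZ}$ is decreasing in $v$, existence for such streams is exactly the nontrivial step.

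The paper avoids constructing evaluable approximants altogether:
\begin{enumerate}
\item Monotonicity in consumption \cite[Proposition~6.8]{herdegen2023infinite_II} gives $\widetilde J^{\,\xi,\pi,l}_0\le\widetilde J^{\,C^\varepsilon}_0$.
\item The maximal-subsolution characterisation \cite[Theorem~6.5]{herdegen2023infinite_II} makes $\widetilde J^{\,C^\varepsilon}$ a subsolution for $(\widetilde g_{EZ},C^\varepsilon)$.
\item It compares $\widetilde J^{\,C^\varepsilon}$ with $\Phi^\varepsilon$ directly on $[0,\iota\wedge n]$, where $\iota$ is the first time $\widetilde J^{\,C^\varepsilon}-\Phi^\varepsilon\le0$. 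This uses that $v\mapsto\widetilde g_{EZ}(t,c,v)$ is nonincreasing, that $\widetilde J^{\,C^\varepsilon}\le0$, and the transversality $\mathbb E[-\Phi^\varepsilon_n]\to0$ inherited from $0\le-\Phi^\varepsilon\le-\mathcal V(\cdot,\varepsilon X^{*,1},Y)$.
\end{enumerate}
To rescue your route instead, you would need an existence result for the utility of $C+\eta C^{*,1}$, $\eta>0$, for arbitrary $(\pi,l)\in\mathcal A(x,y)$.
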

	
	\begin{proof} By \Cref{lem:generalised_EZ_existence_comparison}, the generalised utility process exists for every admissible control in $\mathcal{A}(x,y)$. By \cite[Theorem~6.7]{herdegen2023infinite_II}, the generalised and 
		classical Epstein--Zin utility processes are indistinguishable whenever 
		the latter is well-defined. Since $\mathcal{A}_{EZ}(x,y)\subset
		\mathcal{A}(x,y)$, we have
		\[
		\sup_{(\pi,l)\in\mathcal{A}(x,y)}
		\widetilde J^{\,\xi,\pi,l}_0
		\;\ge\;
		\sup_{(\pi,l)\in\mathcal{A}_{EZ}(x,y)}
		J^{\h{.5pt}\xi,\pi,l}_0
		\;=\;
		\frac{x^{1-\gamma}}{1-\gamma}\,h(y)^\chi,
		\]
		where the right-hand equality is due to \Cref{thm verify h is optimal}. 
		It remains to establish the reverse inequality.\smallskip
		
		Fix an arbitrary $(\pi,l)\in\mathcal{A}(x,y)$ and $\varepsilon>0$. We recall $X^{*,1}$, $X^\varepsilon$, $\pi^\varepsilon$ and $l^\varepsilon$ defined in Step 4 of \Cref{thm:verification}. Set
		\[  
		C_t:=l_tX^{\h{.5pt}\xi,\pi,l}_t,\quad
		C^{*,1}_t:=l^*(Y_t)X^{*,1}_t,\quad
		C^\varepsilon_t:=C_t+\varepsilon C^{*,1}_t.
		\]
		Since $X^{*,1}_t>0$, the perturbed wealth satisfies 
		$X^\varepsilon_t>0$ for all $t\ge0$, while $X^{\h{.5pt}\xi,\pi,l}_t/X^\varepsilon_t$ and $\varepsilon X^{*,1}_t/X^\varepsilon_t$ are well-defined and both lie in $[0,1]$. From \eqref{3817}, we obtain the bounds
		\[
		l^\varepsilon_t\le l_t+l^*(Y_t),\qquad\qquad
		\pig|(\pi^\varepsilon_t)^\top\mu(Y_t)\pig|
		\le\pig|\pi_t^\top\mu(Y_t)\pig|+\pig|(\pi^*(Y_t))^\top\mu(Y_t)\pig|,
		\]
		and
		\[
		(\pi^\varepsilon_t)^\top\Sigma(Y_t)\pi^\varepsilon_t
		\le
		2\,\pi_t^\top\Sigma(Y_t)\pi_t
		+2\,(\pi^*(Y_t))^\top\Sigma(Y_t)\pi^*(Y_t)
		\]
		confirm that $(\pi^\varepsilon,l^\varepsilon)\in\mathcal{A}(x+\varepsilon,y)$ by \Cref{thm:verification} and \Cref{thm verify h is optimal}.\smallskip
		
		\noindent\textbf{Step~1: A candidate supersolution.} Recall $\Phi^\varepsilon$ defined in \eqref{3850} and that $X^\varepsilon_t>0$. It\^{o}'s formula and the HJB equation give \eqref{3885}. The continuous local martingale $M^\varepsilon$ is given by
		\[
		M^\varepsilon_t
		:=\int_0^t
		X^\varepsilon_s\,\partial_x\mathcal{V}(s,X^\varepsilon_s,Y_s)\,
		(\pi^\varepsilon_s)^\top\sigma(Y_s)\,dZ_s
		+\int_0^t
		a(Y_s)\,\partial_y\mathcal{V}(s,X^\varepsilon_s,Y_s)\,dW_s.
		\] 
		Fix two bounded stopping times $\tau\le\zeta$. Define the localising sequence
		\[\theta_m:=\zeta\wedge\rho_m, \quad\text{where }
		\rho_m
		:=
		\inf\bigl\{t\ge\tau:
		\langle M^\varepsilon\rangle_t-\langle M^\varepsilon\rangle_\tau\ge m
		\bigr\}, 
		\]
		so that $\tau\le\theta_m\le\zeta$. The stopped increment 
		$N^m_t:=M^\varepsilon_{t\wedge\theta_m}-M^\varepsilon_{t\wedge\tau}$ 
		is a square-integrable martingale, since 
		$\langle N^m\rangle_t\le m$ for $t\geq 0$, and in particular 
		$\mathbb E_\tau(M^\varepsilon_{\theta_m}-M^\varepsilon_\tau)=0$. Integrating \eqref{3885} over $[\tau,\theta_m]$ and taking conditional 
		expectations then yields
		\begin{equation}
			\Phi^\varepsilon_\tau
			\ge
			\mathbb E_\tau\left[
			\Phi^\varepsilon_{\theta_m}
			+\int_\tau^{\theta_m}
			g_{EZ}\bigl(s,C^\varepsilon_s,\Phi^\varepsilon_s\bigr)\,ds
			\right].
			\label{4106}
		\end{equation}
		We now pass $m\to\infty$. Since $\partial_x\mathcal{V}>0$,
		\begin{equation}
			\Phi^\varepsilon_t
			=\mathcal{V}(t,X^{\h{.5pt}\xi,\pi,l}_t+\varepsilon X^{*,1}_t,Y_t)
			\ge\mathcal{V}(t,\varepsilon X^{*,1}_t,Y_t).
			\label{4113}
		\end{equation}
		By \Cref{thm verify h is optimal} applied with initial wealth 
		$\varepsilon$, the right-hand process is the utility process of the consumption $\varepsilon C^{*,1}$; it 
		provides the integrable lower bound required for the terminal terms 
		in \eqref{4106}. Since the aggregator is 
		one-signed, the integral terms converge by the conditional monotone 
		convergence. Letting $m\to\infty$ in \eqref{4106},
		\begin{equation}
			\Phi^\varepsilon_\tau
			\ge
			\mathbb E_\tau\left[
			\Phi^\varepsilon_\zeta
			+\int_\tau^\zeta
			\widetilde{g}_{EZ}\bigl(s,C^\varepsilon_s,\Phi^\varepsilon_s\bigr)\,ds
			\right].
			\label{4137}
		\end{equation}
		The substitution of $\widetilde{g}_{EZ}$ for $g_{EZ}$ is legitimate: 
		since $C^\varepsilon_s>0$ and $(1-\gamma)\Phi^\varepsilon_s>0$ for all 
		$s\ge0$, the pair $(C^\varepsilon_s,\Phi^\varepsilon_s)$ lies in the 
		interior of the domain of the aggregator, where the two 
		aggregators coincide.
		
		\smallskip
		\noindent\textbf{Step~2: Case $\gamma<1$.}
		We first handle the case $\gamma<1$. Since $1-\tfrac{1}{\psi}>0$, the generalised utility process 
		$\widetilde{J}^{\,\xi,\pi,l}$ takes values in $[0,\infty]$, and 
		$\mathcal{V}\ge0$. From \eqref{4113} we have $\Phi^\varepsilon_t\ge0$, 
		so $\liminf_{T\to\infty}\mathbb E[\Phi^\varepsilon_T]\ge0$. 
		Together with \eqref{4137}, this identifies $\Phi^\varepsilon$ as a 
		supersolution associated with $C^\varepsilon$. As 
		$C\le C^\varepsilon$ a.s. and $\widetilde{g}_{EZ}$ is increasing 
		in consumption, $\Phi^\varepsilon$ is also a supersolution 
		associated with $C$. \cite[Theorem~6.5]{herdegen2023infinite_II} yields that $\widetilde{J}^{\,\xi,\pi,l}$ is the minimal supersolution. Hence
		\[
		\widetilde{J}^{\,\xi,\pi,l}_0
		\le\Phi^\varepsilon_0
		=\frac{(x+\varepsilon)^{1-\gamma}}{1-\gamma}\,h(y)^\chi.
		\]
		
		\noindent\textbf{Step~3: Case \(\gamma>1\).} In this case, the generalised utility processes take values in
		\([-\infty,0]\), and the map $
		v\longmapsto \widetilde g_{EZ}(t,c,v)$ is nonincreasing. By the monotonicity of the generalised utility process in consumption, and since
		\(C\le C^\varepsilon\), we employ \cite[Proposition~6.8]{herdegen2023infinite_II} to obtain
		\begin{equation}
			\widetilde J^{\,\xi,\pi,l}_0
			\le
			\widetilde J^{\,C^\varepsilon}_0 ,
			\label{eq:monotonicity_C_Cepsilon}
		\end{equation}
		where \(\widetilde J^{\,C^\varepsilon}\) denotes the generalised utility process associated with \(C^\varepsilon\). We claim that
		\begin{equation*}
			\widetilde J^{\,C^\varepsilon}_0
			\le
			\Phi^\varepsilon_0 .
		\end{equation*}
		The value function $
		\mathcal V(t,x,y)
		=
		e^{-\delta\vartheta t}
		\frac{x^{1-\gamma}}{1-\gamma}
		h(y)^\chi \leq 0$ is increasing in \(x\). Therefore, using
		\(X^\varepsilon_t=X^{\h{.5pt}\xi,\pi,l}_t+\varepsilon X^{\h{.5pt}*,1}_t\ge \varepsilon X^{\h{.5pt}*,1}_t\), we get $0
		\le
		-\Phi^\varepsilon_t
		\le
		-\mathcal V(t,\varepsilon X^{\h{.5pt}*,1}_t,Y_t).$ By \Cref{thm verify h is optimal}, the process $\mathcal V(t,\varepsilon  X^{*,1}_t,Y_t)$ is the utility process associated with \(\varepsilon C^{*,1}\). Hence, by \eqref{3917}, it satisfies the
		transversality condition and hence  
		\begin{equation} 
			0
			\le-	\lim_{t\to\infty}
			\mathbb E[\Phi^\varepsilon_t]
			\le
			-\lim_{t\to\infty} \mathbb E\big[\mathcal V(t,\varepsilon X^{\h{.5pt}*,1}_t,Y_t)\big]
			=0.
			\label{eq:Phi_transversality_negative_case}
		\end{equation}
		
		Next, from \eqref{4137} and \eqref{eq:Phi_transversality_negative_case}, \(\Phi^\varepsilon\) is a supersolution associated with \((\widetilde g_{EZ},C^\varepsilon)\). On the other hand, by the maximal-subsolution characterisation in \cite[Theorem~6.5]{herdegen2023infinite_II}, the process
		\(\widetilde J^{\,C^\varepsilon}\) is a subsolution associated with
		\((\widetilde g_{EZ},C^\varepsilon)\). Suppose, for contradiction, that $
		\widetilde J^{\,C^\varepsilon}_0-\Phi^\varepsilon_0
		>0.$ Define the stopping time
		\[
		\iota
		:=
		\inf\left\{
		t\ge0:
		\widetilde J^{\,C^\varepsilon}_t-\Phi^\varepsilon_t\le0
		\right\}.
		\]
		On \([0,\iota)\), $
		\widetilde J^{\,C^\varepsilon}_t>\Phi^\varepsilon_t.$ Since \(v\mapsto\widetilde g_{EZ}(t,c,v)\) is nonincreasing, it follows that, for a.e. \(t<\iota\),
		\begin{equation}
			\widetilde g_{EZ}
			\bigl(t,C^\varepsilon_t,\widetilde J^{\,C^\varepsilon}_t\bigr)
			\le
			\widetilde g_{EZ}
			\bigl(t,C^\varepsilon_t,\Phi^\varepsilon_t\bigr).
			\label{eq:g_monotone_on_pre_iota}
		\end{equation}
		Strictly speaking, since \(\widetilde J^{\,C^\varepsilon}\) may be extended-valued,
		the following comparison is understood at the level of the finite-valued
		utility processes in the monotone approximation defining
		\(\widetilde J^{\,C^\varepsilon}\), followed by passage to the limit using
		\cite[Theorem~6.5]{herdegen2023infinite_II}. This avoids subtracting
		extended-valued quantities directly. Fix \(n\in\mathbb N\). Applying the subsolution inequality to \(\widetilde J^{\,C^\varepsilon}\) on
		\([0,\iota\wedge n]\), applying the supersolution inequality
		\eqref{4137} to \(\Phi^\varepsilon\) on the same interval,
		and using \eqref{eq:g_monotone_on_pre_iota}, we obtain
		\begin{align}
			\widetilde J^{\,C^\varepsilon}_0-\Phi^\varepsilon_0
			\le
			\mathbb E
			\left[
			\widetilde J^{\,C^\varepsilon}_{\iota\wedge n}
			-
			\Phi^\varepsilon_{\iota\wedge n}
			\right].
			\label{eq:comparison_until_iota_n}
		\end{align}
		Now decompose according to the events \(\{\iota\le n\}\) and \(\{\iota>n\}\). On
		\(\{\iota\le n\}\), by the definition of \(\iota\), $\widetilde J^{\,C^\varepsilon}_\iota
		-
		\Phi^\varepsilon_\iota
		\le0.$ On \(\{\iota>n\}\), since the generalised utility process takes values in
		\([-\infty,0]\), we have $\widetilde J^{\,C^\varepsilon}_n
		-
		\Phi^\varepsilon_n
		\le
		-\Phi^\varepsilon_n.$ Using these two inequalities in \eqref{eq:comparison_until_iota_n}, we get
		\begin{align*}
			\widetilde J^{\,C^\varepsilon}_0-\Phi^\varepsilon_0
			\le
			\mathbb E
			\left[
			\left(
			\widetilde J^{\,C^\varepsilon}_\iota
			-
			\Phi^\varepsilon_\iota
			\right)
			\mathbf 1_{\{\iota\le n\}}
			\right]
			+
			\mathbb E
			\left[
			\left(
			\widetilde J^{\,C^\varepsilon}_n
			-
			\Phi^\varepsilon_n
			\right)
			\mathbf 1_{\{\iota>n\}}
			\right]
			\le
			\mathbb E
			\left[
			-\Phi^\varepsilon_n\mathbf 1_{\{\iota>n\}}
			\right]
			\le
			\mathbb E[-\Phi^\varepsilon_n].
			\label{eq:d0_bound_by_minus_Phi}
		\end{align*}
		Letting \(n\to\infty\) and using
		\eqref{eq:Phi_transversality_negative_case}, we obtain a contradiction to \(\widetilde J^{\,C^\varepsilon}_0-\Phi^\varepsilon_0>0\). Combining this with \eqref{eq:monotonicity_C_Cepsilon}, we conclude that $\widetilde J^{\,\xi,\pi,l}_0
		\le
		\Phi^\varepsilon_0
		=
		\dfrac{(x+\varepsilon)^{1-\gamma}}{1-\gamma}h(y)^\chi.$ Taking $\varepsilon \to 0^+$, we complete the proof.\end{proof}

	\subsection{The regime \texorpdfstring{$\vartheta>1$}{theta>1}: proper utility}
	\label{sec:verification_theta_greater_one}
	
	Throughout this subsection, we assume that $\vartheta>1$. We shall use the positive coordinates of
	\cite[Section~5.1]{herdegen2025proper}.  For a consumption \(C \in \mathscr{P}([0,\infty))\) and the associated utility process \(J^C\) satisfying \eqref{eq:EZ_utility eq}, set
	\begin{equation*}
		\mathcal V^C:=(1-\gamma)J^C \geq 0,
		\qquad
		U_t^C:=\beta\vartheta e^{-\delta t}C_t^{1-1/\psi}.
	\end{equation*}
	When $C=lX^{\xi,\pi,l}$, we also write $\mathcal V^{\,\xi,\pi,l}=\mathcal V^{\,lX^{\xi,\pi,l}}$ and $J^{\,\xi,\pi,l}=J^{\,lX^{\xi,\pi,l}}$. We say $\mathcal V$ is the utility process associated with some process $U$ if solves the utility equation
	\begin{equation}
		\mathcal V_t=\mathbb E_t\left[\int_t^\infty U_s  (\mathcal V_s)^q\,ds\right],
		\quad  \text{for $t\ge0$ with} \quad \mathbb E\left[\int_0^\infty U_s  (\mathcal V_s)^q\,ds\right]<\infty.
		\label{eq:positive_recursion_theta_greater_one}
	\end{equation}  
	From \eqref{eq:EZ_utility eq}, $J^C$ is the utility process solving \eqref{eq:positive_recursion_theta_greater_one} associated with $U^C=\beta\vartheta e^{-\delta t}C_t^{1-1/\psi}$. Let $X^{*,1}$ be the wealth generated by the feedback controls in \eqref{eq:feedback_controls} from unit initial wealth with the solution $h$ of the HJB equation \eqref{eq: reduced HJB}. For initial wealth $x>0$, we set
	\begin{equation}
		\mathcal V^{*,1}_t:=e^{-\delta\vartheta t}
		(X^{*,1}_t)^{1-\gamma}h(Y_t)^\chi,
		\,\,\,
		U^{*,1}_t:=U^{ C^{*,1}}_t,
		\,\,\,
		C^{*,x}_t:=xl^*(Y_t)X^{*,1}_t,
		\,\,\,
		J_t^{*,x}:=e^{-\delta\vartheta t}
		\frac{(xX^{*,1}_t)^{1-\gamma}}{1-\gamma}h(Y_t)^\chi.
		\label{eq:candidate_positive_coordinates}
	\end{equation}
	We first show that the feedback controls lie in $\mathcal A^\dagger(x,y)$.
	
	\begin{lemma}[\bf Candidate unique proper utility]
		\label{lem:candidate_proper_theta_greater_one} Suppose that $\vartheta>1$, \Cref{assu. Qualitative Verification} holds and $h$ is a bounded positive solution of the HJB equation \eqref{eq: reduced HJB}. Then the corresponding feedback control $(\pi^*(Y),l^*(Y))$ in \eqref{eq:feedback_controls} belongs to
		$\mathcal A^\dagger(x,y)$, and $J^{*,x}$ in
		\eqref{eq:candidate_positive_coordinates} is the unique proper utility
		process solving \eqref{eq:positive_recursion_theta_greater_one} associated with $C^{*,x}$. In addition, with
		\begin{equation}
			\vartheta l^*(Y_t)\geq\underline\lambda:=\vartheta\beta^\psi (\sup h)^{p-1}>0,
			\label{eq:candidate_discount_rate}
		\end{equation}
		one has
		\begin{equation}
			\mathbb E_t\big(|J_T^{*,x}|\big)
			\le |J_t^{*,x}|e^{-\underline\lambda(T-t)}\qquad \text{for $0\le t\le T$.}
			\label{eq:candidate_exponential_tail}
		\end{equation}
	\end{lemma}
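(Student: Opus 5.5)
We reuse Steps 1–2 of the proof of \Cref{thm:verification}; those steps never use $\vartheta<1$. First, admissibility $(\pi^*(Y),l^*(Y))\in\mathcal A(x,y)$ holds exactly as in Step~1. The candidate consumption $C^{*,x}=xl^*(Y)X^{*,1}$ is continuous, hence right-continuous. Next we check conditions (ii)–(iii) of \Cref{thm:verification}. Condition (ii) follows verbatim from Step~1 of \Cref{thm verify h is optimal}: boundedness of $h$ together with \Cref{assu. Qualitative Verification} gives the scale-function limits for $\overline Y$, hence conservativeness. Step~2 of the proof of \Cref{thm:verification} then shows that $\mathcal E(N)$ is a true martingale on finite horizons, with density of $\overline{\mathbb P}$ on $\mathcal F_T$ equal to $\mathcal E(N)_T$.

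The bound \eqref{eq:candidate_discount_rate} is immediate. By \eqref{eq:p_eta_def}, $-\chi(\psi-1)/(1-\gamma)=p-1<0$, so $l^*=\beta^\psi h^{p-1}\ge \beta^\psi(\sup h)^{p-1}$.

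Plugging this into \eqref{3190}, we change measure as in \eqref{eq:Ehat_terminal}, using $J^{*,x}_t\ne0$ and its fixed sign:
\[
\mathbb E_t|J^{*,x}_T|=|J^{*,x}_t|\,\overline{\mathbb E}_t\Big[\exp\Big(-\int_t^T\vartheta l^*(Y_s)\,ds\Big)\Big]\le |J^{*,x}_t|e^{-\underline\lambda(T-t)},
\]
which is \eqref{eq:candidate_exponential_tail}. The same computation, \eqref{eq:Ehat_integral}–\eqref{eq:consistency}, shows that $J^{*,x}$ solves \eqref{eq:EZ_utility eq}, because the terminal term decays exponentially. Absolute integrability follows from \eqref{eq:aggregator_linear}:
\[
\mathbb E\int_0^\infty|g_{EZ}|\,ds=|J^{*,x}_0|\,\overline{\mathbb E}\Big[\int_0^\infty \vartheta l^*e^{-\int_0^s\vartheta l^*}\,ds\Big]\le |J^{*,x}_0|.
\]
Equivalently, in positive coordinates $\mathcal V=(1-\gamma)J^{*,x}$ solves \eqref{eq:positive_recursion_theta_greater_one}. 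Properness is automatic: $(1-\gamma)J^{*,x}_t=e^{-\delta\vartheta t}(xX^{*,1}_t)^{1-\gamma}h(Y_t)^\chi>0$ for all $t$, so \eqref{eq:proper_utility_theta_greater_one} holds regardless of its hypothesis.

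The main obstacle is uniqueness among proper solutions, since for $\vartheta>1$ nonuniqueness is intrinsic and the comparison must use properness. Our plan is to adapt \cite[Proposition~B.6]{herdegen2025proper} through the random clock $\Lambda_t=\int_0^t\vartheta l^*(Y_s)\,ds$. Let $\mathcal V'$ be another proper solution for $U^{*}:=U^{C^{*,x}}$ and set $Z_t:=\mathcal V'_t/\mathcal V^{*}_t$, where $\mathcal V^*=(1-\gamma)J^{*,x}$. Because $\mathcal V^*$ satisfies the linear dynamics \eqref{eq:J_SDE} and $U^*(\mathcal V^*)^{q}=\vartheta l^*\mathcal V^*$, under $\overline{\mathbb P}$ the ratio solves
\[
Z_t=\overline{\mathbb E}_t\Big[\int_t^\infty \vartheta l^*(Y_s)e^{-(\Lambda_s-\Lambda_t)}Z_s^{\,q}\,ds\Big].
\]
We will verify this identity by the same change of measure, localising and using the integrability of $\mathcal V'$. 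Properness of $\mathcal V'$ gives $Z>0$, since $C^{*,x}>0$ makes the conditional expectation in \eqref{eq:proper_utility_theta_greater_one} positive.

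In the clock time $u=\Lambda_t$, which runs to $\infty$ by $\vartheta l^*\ge\underline\lambda$, the equation becomes the constant-coefficient recursion $Z=\overline{\mathbb E}[\int e^{-(u'-u)}Z^q\,du']$ with $q\in(0,1)$. For this recursion the argument of \cite[Proposition~B.6]{herdegen2025proper} shows that the only strictly positive solution is $Z\equiv1$. The idea is to compare $\sup Z$ and $\inf Z$ using concavity of $z\mapsto z^q$ and the fixed point $z=1$. The delicate points are that $Z$ need not be bounded a priori and that $\Lambda$ is only a stochastic time change. We handle the first by truncation, using the monotone maximal-solution construction to bound $Z$ above, and the second by optional time-change for the stopping times $\Lambda^{-1}_u$. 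Together these give $\mathcal V'=\mathcal V^*$.
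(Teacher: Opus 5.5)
\textbf{Verdict: there is a genuine gap, in the uniqueness step, in the direction $Z\ge 1$.}

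Everything up to and including properness is fine and matches the paper's Step~1:
\begin{itemize}
\item conservativeness of the tilted factor via the scale-function comparison;
\item the true-martingale property of $\mathcal E(N)$;
\item the bound $l^*=\beta^\psi h^{p-1}\ge\beta^\psi(\sup h)^{p-1}$;
\item the Bayes computations giving the tail estimate, the utility equation and integrability.
\end{itemize}
Your ratio identity $Z_t=\overline{\mathbb E}_t\bigl[\int_t^\infty\vartheta l^*(Y_s)e^{-(\Lambda_s-\Lambda_t)}Z_s^q\,ds\bigr]$ is also correct.

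The problem is the lower bound. Comparing $\inf Z$ with the fixed point only yields $\inf Z\ge(\inf Z)^q$, which is vacuous when $\inf Z=0$. Properness gives only $Z_t>0$ a.s.\ for each $t$, not a uniform positive lower bound, so that case is never excluded.

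Nor can \cite[Proposition~B.6]{herdegen2025proper} supply it. A comparison principle valid for all solutions of the recursion cannot give $\mathcal V'\ge\mathcal V^*$. The reason is that $\mathcal V'\equiv0$, and solutions that are positive up to a random time and zero afterwards, satisfy the same equation. Already deterministically, $z'=z-z^q$ started in $(0,1)$ reaches $0$ in finite time. Moreover, in clock time the derivative of the aggregator at $\mathcal V^*$ is identically $q$, not $O(e^{-cu})$. So Proposition~B.6 does not even apply with supersolution $\mathcal V^*$. This is why the paper's comparison lemma uses the perturbed supersolution $k(1+\zeta e^{\nu\Lambda})\mathcal V^{*,1}$.

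\textbf{The missing idea} is a dynamic argument that converts properness into $Z\ge1$: show that on $\{Z_t<1\}$ the ratio actually hits $0$ in finite time with positive probability. The paper's Step~2A does this as follows.
\begin{itemize}
\item Write $dZ=\vartheta l^*(Y)(Z-Z^q)\,ds+dL$ under the tilted measure and set $H:=1-Z^{1-q}$.
\item It\^o's formula gives, before $Z$ leaves $(0,1)$, $dH=(1-q)\vartheta l^*(Y)H\,ds+\tfrac{q(1-q)}{2}Z^{-q-1}\,d\langle L\rangle+d\widetilde L$ with $\widetilde L$ a local martingale.
\item So $H\in[0,1]$ has drift at least $(1-q)\underline\lambda H$ plus a nondecreasing term.
\item Optional sampling, as in \cite[Lemma~6.5]{herdegen2025proper}, shows $H$ reaches $1$ (i.e.\ $Z$ hits $0$) before some deterministic $T$ with positive probability.
\item Because $U^*>0$ and the recursion has nonnegative terms, $\mathcal V'$ stays at $0$ once it hits $0$. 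Hence $\mathcal V'_T=0$ on a set of positive probability, contradicting properness.
\end{itemize}

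For the other direction $Z\le1$, your concavity idea works without any truncation or maximal-solution construction. Let $T^t_u:=\inf\{s\ge t:\Lambda_s-\Lambda_t\ge u\}$ and $m(u):=\overline{\mathbb E}_t[Z_{T^t_u}]$. Jensen gives $m'\ge m-m^q$, so $Z_t>1$ keeps $e^{-u}m(u)$ bounded below. But $\mathbb E_t[\mathcal V'_{T^t_u}]=\mathcal V^*_te^{-u}m(u)$ must tend to $0$ in $L^1$ by integrability of the aggregator, which is a contradiction.
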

	
	\begin{proof}  As in Step~1 of the proof of \Cref{thm:verification}, we see that $(\pi^*(Y),l^*(Y))\in\mathcal A(x,y).$ Indeed, all the relevant coefficients are locally bounded along the path of \(Y\) on every finite time interval. Moreover, \(X^{*,1}\) and \(Y\) are continuous and \(l^*\) is continuous, so \(C^{*,x}\)
		is continuous, and hence right-continuous. It remains to prove that
		\(U^{C^{*,x}}\) admits a unique proper utility process solving \eqref{eq:positive_recursion_theta_greater_one}.\smallskip

		\noindent\textbf{Step~1: $J^{*,1}$ is a proper utility process.} The HJB calculation in \eqref{eq:aggregator_linear}-\eqref{eq:J_SDE} uses only the reduced HJB equation \eqref{eq: reduced HJB} and remains valid for
		$\vartheta>1$.  In the positive coordinates, it gives
		\begin{equation}
			d \mathcal V^{*,1}_t=-\vartheta l^*(Y_t) \mathcal V^{*,1}_t\,dt+ \mathcal V^{*,1}_t\,dN_t,
			\qquad
			U^{*,1}_t (\mathcal V^{*,1}_t)^q=\vartheta l^*(Y_t) \mathcal V^{*,1}_t,
			\label{eq:candidate_dynamics_theta_greater_one}
		\end{equation}
		where $N$ is the continuous local martingale in \eqref{2997}. We first verify that $\mathcal E(N)$ is a true martingale on every finite
		horizon.  Under the measure locally induced by $\mathcal E(N)$, the factor
		drift is
		\[
		\widetilde b(y)+a(y)^2\frac{h'(y)}{h(y)}.
		\]
		Normalise the scale function $\widetilde s$ in \eqref{eq. scale fucntion of shifted state variable} by
		$\widetilde s(c)=0$, and normalise the scale function $s_h$ of the tilted
		factor in the same way.  For $y>c$ and $y<c$, respectively,
		\[
		s_h(y)\ge\frac{h(c)^2}{\sup h^2}\widetilde s(y),
		\qquad
		s_h(y)\le\frac{h(c)^2}{\sup h^2}\widetilde s(y).
		\]
		\Cref{assu. Qualitative Verification} therefore implies
		\[
		\lim_{y\uparrow(\partial E)_+}s_h(y)=+\infty,
		\qquad
		\lim_{y\downarrow(\partial E)_-}s_h(y)=-\infty.
		\]
		Thus the tilted factor diffusion is conservative. \Cref{ass.market well-posed} and the local $C^2$-regularity and positivity of $h$ give local weak uniqueness; the proof of \Cref{thm verify h is optimal} then gives well-posedness of the full tilted martingale problem.  The compact-exhaustion
		argument in Step~2 of the proof of \Cref{thm:verification} consequently yields
		\begin{equation*}
			\mathbb E[\mathcal E(N)_T]=1,
			\qquad T<\infty.
		\end{equation*} 
		We denote by
		$\widehat{\mathbb P}$ the law of the conservative tilted martingale problem;
		then
		\begin{equation}
			\left.\frac{d\widehat{\mathbb P}}{d\mathbb P}
			\right|_{\mathcal F_T}=\mathcal E(N)_T,
			\qquad T<\infty.
			\label{eq:candidate_tilted_measure}
		\end{equation}
		We write $\widehat{\mathbb E}$ for expectation under
		$\widehat{\mathbb P}$.\smallskip
		
		By \eqref{eq:p_eta_def} and \eqref{eq:candidate_discount_rate}, and the fact that $p<1$, it follows that
		\begin{equation}
			\vartheta l^*(Y_t)=\vartheta\beta^\psi h(Y_t)^{p-1}
			\ge\vartheta\beta^\psi (\sup h)^{p-1}=\underline\lambda>0.
			\label{eq:candidate_rate_lower_bound}
		\end{equation}
		Solving the first equation in
		\eqref{eq:candidate_dynamics_theta_greater_one} and using Bayes' formula
		gives
		\begin{equation}
			\mathbb E_t[ \mathcal V^{*,1}_T]
			= \mathcal V^{*,1}_t\widehat{\mathbb E}_t\left[
			\exp\left(-\int_t^T\vartheta l^*(Y_s)\,ds\right)\right]
			\le \mathcal V^{*,1}_te^{-\underline\lambda(T-t)}.
			\label{eq:candidate_positive_exponential_tail}
		\end{equation}
		This proves \eqref{eq:candidate_exponential_tail} after scaling by
		$x^{1-\gamma}/|1-\gamma|$.\smallskip
		
		The second identity in
		\eqref{eq:candidate_dynamics_theta_greater_one}, conditional Tonelli's
		theorem, and Bayes' formula imply
		\begin{align}
			\mathbb E_t\left[\int_t^T
			U^{*,1}_s (\mathcal V^{*,1}_s)^q\,ds\right]
			&= \mathcal V^{*,1}_t\widehat{\mathbb E}_t\left[
			\int_t^T\vartheta l^*(Y_s)
			\exp\left(-\int_t^s\vartheta l^*(Y_u)\,du\right)ds\right]\notag\\
			&= \mathcal V^{*,1}_t\left\{1-
			\widehat{\mathbb E}_t\left[
			\exp\left(-\int_t^T\vartheta l^*(Y_s)\,ds\right)\right]\right\}.
			\label{eq:candidate_finite_horizon_aggregator}
		\end{align}
		Letting $T\to\infty$ in
		\eqref{eq:candidate_finite_horizon_aggregator} proves
		\eqref{eq:positive_recursion_theta_greater_one} for $ \mathcal V^{*,1}$, and taking
		$t=0$ proves the required absolute integrability.  Hence
		$J^{*,1}= \mathcal V^{*,1}/(1-\gamma)$ is a utility process solving \eqref{eq:EZ_utility eq} associated with $C^{*,1}$.  It is proper because
		$ \mathcal V^{*,1}$ and $ C^{*,1}$ are strictly positive.  Homogeneity gives the
		same conclusion for every $x>0$.\smallskip
		
		\noindent\textbf{Step~2: Uniqueness.} It remains to prove uniqueness among proper utility processes.  It is enough to treat $x=1$.  Let $\widetilde{\mathcal V}$ be the positive coordinate of another
		proper utility process solving \eqref{eq:positive_recursion_theta_greater_one} associated with $U^{C^{*,1}}$, and put
		\[
		Q_t:=\frac{\widetilde{\mathcal V}_t}{ \mathcal V^{*,1}_t}.
		\]
		Because the filtration is continuous,
		$\widetilde{\mathcal V}+\int_0^\cdot U^{*,1}_s\widetilde{\mathcal V}_s^qds$ is a continuous
		uniformly integrable martingale.  It\^o's formula and Girsanov's theorem
		therefore give, under $\widehat{\mathbb P}$,
		\begin{equation*}
			dQ_t=\vartheta l^*(Y_t)(Q_t-Q_t^q)\,dt+dL_t,
		\end{equation*}
		where $L$ is a continuous local $\widehat{\mathbb P}$-martingale. Define the uniformly integrable \(\mathbb P\)-martingale
		\[
		\widetilde M_t
		:=
		\widetilde{\mathcal V}_t
		+
		\int_0^t
		U^{*,1}_s
		(\widetilde{\mathcal V}_s)^q\,ds
		=
		\mathbb E_t\left[
		\int_0^\infty
		U^{*,1}_s
		(\widetilde{\mathcal V}_s)^q\,ds
		\right].
		\]
		To see
		the change of measure explicitly, if $\widetilde M$ denotes the preceding
		uniformly integrable martingale and
		\[
		K_t:=\int_0^t\frac{1}{ \mathcal V^{*,1}_s}\,d\widetilde M_s
		-\int_0^tQ_s\,dN_s,
		\]
		then the quotient formula under $\mathbb P$ gives
		\[
		dQ_t=\vartheta l^*(Y_t)(Q_t-Q_t^q)\,dt+dK_t-d\langle K,N\rangle_t.
		\]
		Thus $L:=K-\langle K,N\rangle$ is a local martingale under
		$\widehat{\mathbb P}$.\smallskip

		\noindent\textbf{Step~2A: $Q\ge1$.} We first show that \(Q\ge1\). Properness of
		\(\widetilde{\mathcal V}\) and the strict positivity of \(C^{*,1}\)
		imply that \(\widetilde{\mathcal V}_t>0\) and hence \(Q_t>0\), \(\mathbb P\)-almost surely,
		for every \(t\ge0\). By the equivalence of \(\mathbb P\) and
		\(\widehat{\mathbb P}\) on \(\mathcal F_t\), the same conclusion holds
		\(\widehat{\mathbb P}\)-almost surely.\smallskip

		Suppose that
		\(\widehat{\mathbb P}(Q_t<1)>0\) for some \(t\ge0\).
		Since \(1-q>0\), we write the event \(\{Q_t<1\}\) as
		\[
		\{Q_t<1\}
		=
		\bigcup_{n=2}^{\infty}
		\left\{
		1-Q_t^{1-q}\ge\frac1n
		\right\},
		\qquad \widehat{\mathbb P}\text{-a.s.}
		\]
		Consequently, there exists \(\alpha\in(0,1)\) such that
		\[
		B:=\{1-Q_t^{1-q}\ge\alpha\}\in\mathcal F_t,
		\qquad
		\widehat{\mathbb P}(B)>0.
		\]
		On the conditional probability space $\bigl(
		\Omega,\mathcal F,\{\mathcal F_s\}_{s\ge t},
		\widehat{\mathbb P}(\,\cdot\mid B)
		\bigr)$, 
		we define
		\[
		\sigma:=\inf\{s\ge t:Q_s\notin(0,1)\},
		\qquad
		H_s:=1-Q_{s\wedge\sigma}^{1-q},
		\qquad s\ge t.
		\]	
		Then $\alpha\le H_t<1$ and $0\le H_s\le1$ for $s\ge t$. Since \(B\in\mathcal F_t\), the local-martingale property of the
		increments after \(t\) is preserved under conditioning on \(B\).
		Applying It\^o's formula, after localisation away from the endpoints
		of \((0,1)\), gives, for \(t\le s<\sigma\),
		\[
		dH_s
		=
		(1-q)\vartheta l^*(Y_s)H_s\,ds
		+
		\frac{q(1-q)}{2}Q_s^{-q-1}\,d\langle L\rangle_s
		+
		d\widetilde L_s,
		\]
		where \(\widetilde L_t=0\) and $
		d\widetilde L_s
		=
		(q-1)Q_s^{-q}\,dL_s$ locally on the stochastic interval \([t,\sigma)\). Consequently, by the same localisation and passage to the limit,
		\begin{equation}
			G_s
			:=
			H_s-H_t
			-
			\int_t^{s\wedge\sigma}
			(1-q)\vartheta l^*(Y_r)H_r\,dr,
			\qquad s\ge t,
			\label{eq:ratio_below_one_submartingale}
		\end{equation}
		is a local submartingale under the conditional law. Indeed, before
		\(\sigma\), the finite-variation term of $G$ is $\frac{q(1-q)}{2}
		\int_t^\cdot Q_s^{-q-1}\,d\langle L\rangle_s,$ which is nondecreasing because \(q\in(0,1)\).\smallskip
		
		Since \(0\le H_s\le1\) and the integral in
		\eqref{eq:ratio_below_one_submartingale} is nonnegative, $G_s\le1-H_t\le1-\alpha.$ Thus, being a local submartingale bounded from above by a constant, \(G\) is a true submartingale under the conditional law. By continuity, if \(Q\) exits \((0,1)\) through \(0\), then \(H\)
		reaches \(1\); if \(Q\) exits through \(1\), then \(H\) is stopped at
		\(0\). Therefore, until \(H\) reaches \(1\),
		\begin{equation}
			H_s-H_t
			\ge
			(1-q)\underline\lambda
			\int_t^s H_r\,dr
			+
			G_s-G_t,
			\qquad
			t\le s\le\inf\{u\ge t:H_u=1\}.
			\label{eq:ratio_below_one_hitting_inequality}
		\end{equation}
		
		We now adapt the proof of
		\cite[Lemma~6.5]{herdegen2025proper}. Fix
		\(\alpha'\in(0,\alpha)\) and define
		\[
		\tau:=\inf\{s\ge t:H_s\notin(\alpha',1)\}.
		\]
		The proof of \cite[Lemma~6.5]{herdegen2025proper} uses the initial value only through a
		strictly positive lower bound. Applying optional sampling to
		\eqref{eq:ratio_below_one_hitting_inequality} with $\tau \wedge (t+n)$ under the conditional law
		\(\widehat{\mathbb P}(\,\cdot\mid B)\), and then sending $n \to \infty$, gives $\widehat{\mathbb E}[\tau-t \,|\, B]
		\le
		\frac{1-\alpha}
		{(1-q)\underline\lambda\,\alpha'}
		<\infty.$ Thus, \(\tau<\infty\) almost surely under the conditional law. Moreover,
		by continuity, \(H_\tau\in\{\alpha',1\}\), and the same optional-sampling
		argument yields
		\[
		\widehat{\mathbb P}(H_\tau=1\mid B)
		\ge
		\frac{\alpha-\alpha'}{1-\alpha'}
		>0.
		\]
		Consequently, there exists a deterministic time \(T>t\) such that
		\[
		\widehat{\mathbb P}
		\bigl(\tau\le T,\ H_\tau=1\mid B\bigr)>0.
		\]
		By the definition of \(H\), the event \(\{H_\tau=1\}\) implies
		\(Q_\tau=0\), and hence
		\(\widetilde{\mathcal V}_\tau=0\). Since
		\(\widehat{\mathbb P}(B)>0\), it
		also has positive \(\mathbb P\)-probability as \(\mathbb P\) and \(\widehat{\mathbb P}\) are equivalent on
		\(\mathcal F_T\) by \eqref{eq:candidate_tilted_measure}.\smallskip

		Let $\tau_0:=\inf\{s\ge t:\widetilde{\mathcal V}_s=0\}$ and choose
		a deterministic $T>t$ such that $\mathbb P(\tau_0\le T)>0$.  Optional
		sampling at the bounded stopping time $\tau_0\wedge T$ gives, on
		$\{\tau_0\le T\}$,
		\[
		0=\widetilde{\mathcal V}_{\tau_0\wedge T}
		=\mathbb E_{\tau_0\wedge T}\left[\widetilde{\mathcal V}_T+
		\int_{\tau_0\wedge T}^T U^{*,1}_s
		\widetilde{\mathcal V}_s^q\,ds\right].
		\]
		Both terms inside the conditional expectation are nonnegative, so
		$\widetilde{\mathcal V}_T=0$ on an event of positive probability.  This contradicts
		properness, and hence $Q_t\ge1$ for every deterministic $t$. \smallskip
		
		\noindent\textbf{Step~2B: $Q\leq 1$.} We next rule out strict inequality.  Fix $t\ge0$ and define, for $u\ge0$,
		\begin{equation*}
			T_u^t:=\inf\left\{s\ge t:
			\int_t^s\vartheta l^*(Y_r)\,dr\ge u\right\}.
		\end{equation*}
		By \eqref{eq:candidate_rate_lower_bound},
		$T_u^t\le t+u/\underline\lambda$.
		Moreover, $T_u^t\uparrow\infty$ as $u \to \infty$: continuity of $Y$ and local continuity of
		$l^*$ make the integral of $\vartheta l^*(Y)$ finite on every finite interval.
		Optional sampling of the utility equation of $\widetilde{\mathcal V}$ in \eqref{eq:positive_recursion_theta_greater_one} at $T_u^t$, followed by Bayes'
		formula and the pathwise change of variables
		$v=\int_t^s\vartheta l^*(Y_r)dr$, gives
		\begin{equation}
			Q_t=\widehat{\mathbb E}_t\left[
			e^{-u}Q_{T_u^t}+\int_0^u e^{-v}Q_{T_v^t}^q\,dv\right].
			\label{eq:proper_ratio_clock_identity}
		\end{equation}
		Under a regular conditional law given $\mathcal F_t$, set
		$m(u):=\widehat{\mathbb E}_t[Q_{T_u^t}]$. Conditional Tonelli's theorem and
		\eqref{eq:proper_ratio_clock_identity} give
		\begin{equation*}
			e^{-u}m(u)=Q_t-\int_0^u e^{-v}
			\widehat{\mathbb E}_t[Q_{T_v^t}^q]\,dv
			\le Q_t<\infty.
		\end{equation*}
		Thus $m$ is locally absolutely continuous, and for almost every $u$, conditional Jensen's inequality implies that $m'(u)=m(u)-\widehat{\mathbb E}_t[Q_{T_u^t}^q]
		\ge m(u)-m(u)^q$. Hence
		\begin{equation}
			m(u)\ge\left[1+
			\big(Q_t^{1-q}-1\big)e^{(1-q)u}\right]^{1/(1-q)}.
			\label{eq:proper_ratio_clock_ode_bound}
		\end{equation}
		On the other hand, Bayes' formula gives
		\begin{equation}
			\mathbb E_t[\widetilde{\mathcal V}_{T_u^t}]
			= \mathcal V^{*,1}_te^{-u}m(u).
			\label{eq:proper_ratio_terminal_clock}
		\end{equation}
		
		If $\mathbb P(Q_t>1)>0$, there exist $\varepsilon>0$ and an event
		$B':=\{Q_t\ge1+\varepsilon\}\in\mathcal F_t$ of positive probability. By
		\eqref{eq:proper_ratio_clock_ode_bound},
		\[
		e^{-u}m(u)
		\ge\left[e^{-(1-q)u}+Q_t^{1-q}-1\right]^{1/(1-q)}
		\ge\left[(1+\varepsilon)^{1-q}-1\right]^{1/(1-q)}
		\quad\text{on }B'.
		\]
		Together with \eqref{eq:proper_ratio_terminal_clock}, this yields
		\[
		\inf_{u\ge0}\mathbb E\left[
		\mathbf{1}_{B'}\widetilde{\mathcal V}_{T_u^t}\right]
		\ge
		\left[(1+\varepsilon)^{1-q}-1\right]^{1/(1-q)}
		\mathbb E[\mathbf{1}_{B'} \mathcal V^{*,1}_t]>0.
		\]
		This is impossible because the utility equation of $\widetilde{\mathcal{V}}$ at the bounded stopping
		time $T_u^t$ gives
		\[
		\mathbb E\left[\mathbf{1}_{B'}\widetilde{\mathcal V}_{T_u^t}\right]
		=\mathbb E\left[\mathbf{1}_{B'}\int_{T_u^t}^\infty
		U^{*,1}_s\widetilde{\mathcal V}_s^q\,ds\right]\longrightarrow0\qquad
		\text{as $u \to \infty$}
		\]
		by dominated convergence, since the total aggregator is integrable and
		$T_u^t\uparrow\infty$ as $u \to \infty$. Thus \(Q_t\le1\), almost surely, for every deterministic \(t\).
		Applying this conclusion at rational times and using the continuity of
		\(Q\), we obtain $
		Q_t\le1$ for $t\ge0$ almost surely.
	\end{proof}
	
	\subsubsection{Comparison principle}
	
	The following construction is the incomplete market counterpart of the
	perturbation used in \cite[Theorem~8.1]{herdegen2023infinite_II}
	and \cite[Lemma~E.6]{herdegen2025proper}.  We include the argument because
	the wealth perturbation, localisation, and terminal estimate all have to be
	checked under the present unbounded factor coefficients. Fix $(\pi,l)\in\mathcal A^\dagger(x,y)$, and we continue to use the notations in the proof of \Cref{thm:verification}: $X_t^\varepsilon:=X^{\xi,\pi,l}_t+\varepsilon X^{*,1}_t$, $C_t^\varepsilon:=C^{\xi,\pi,l}_t+\varepsilon C^{*,1}_t$, and $\Phi_t^\varepsilon:=e^{-\delta\vartheta t} \frac{(X_t^\varepsilon)^{1-\gamma}}{1-\gamma}h(Y_t)^\chi$ for $\varepsilon>0$. 
	
	\begin{lemma}[\bf Perturbed HJB supersolution]
		\label{lem:perturbed_supersolution_theta_greater_one}Suppose that $\vartheta>1$, \Cref{assu. Qualitative Verification} holds and $h$ is a bounded positive solution of the HJB equation \eqref{eq: reduced HJB}. Fix $(\pi,l)\in\mathcal A^\dagger(x,y)$. For every $\varepsilon>0$ and all bounded stopping times
		$0\le\tau\le\zeta$, we have
		\begin{equation}
			\Phi_\tau^\varepsilon\ge
			\mathbb E_\tau\left[\Phi_\zeta^\varepsilon+
			\int_\tau^\zeta
			g_{EZ}(s,C_s^\varepsilon,\Phi_s^\varepsilon)\,ds\right]\qquad\text{and}\qquad
			\liminf_{t\to\infty}\mathbb E[\Phi_t^\varepsilon]\ge0.
			\label{eq:perturbed_supersolution_inequality}
		\end{equation} 
		Thus $\Phi^\varepsilon$ is a supersolution for $C^\varepsilon$ and also for $C^{\xi,\pi,l}$.  Moreover, it holds that $\varepsilon^{\gamma-1}\displaystyle\sup_{t\geq0}\mathbb E\bigl[|\Phi_t^\varepsilon|\bigr]<\infty$.
	\end{lemma}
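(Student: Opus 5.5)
The plan follows Step~4 of the proof of \Cref{thm:verification} and Step~1 of the proof of \Cref{prop:classical_generalised_value_equality}; the candidate estimates of \Cref{lem:candidate_proper_theta_greater_one} take the place of the $\vartheta\in(0,1)$ transversality argument.

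\emph{Step 1: supersolution inequality on bounded intervals.} Since $X^\varepsilon\ge\varepsilon X^{*,1}>0$, It\^o's formula applies to $\Phi^\varepsilon=\mathcal V(t,X^\varepsilon_t,Y_t)$. The control $(\pi^\varepsilon,l^\varepsilon)$ in \eqref{3817} is suboptimal in the time-dependent HJB equation \eqref{eq:time_dependent_HJB_EZ}, so it yields \eqref{3885}:
\[
d\Phi^\varepsilon_t\le -g_{EZ}(t,C^\varepsilon_t,\Phi^\varepsilon_t)\,dt+dM^\varepsilon_t .
\]
The HJB supremum is unchanged for $\vartheta>1$, because the reduction to \eqref{eq: reduced HJB} used only $\vartheta>0$. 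I then localise with $\rho_m$, defined through $\langle M^\varepsilon\rangle$ exactly as in Step~1 of \Cref{prop:classical_generalised_value_equality}, to obtain \eqref{4106} at $\theta_m=\zeta\wedge\rho_m$. The limit $m\to\infty$ is handled separately in the two sign cases.

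\emph{Case $\gamma<1$.} Here $\Phi^\varepsilon\ge0$ and $g_{EZ}\ge0$. The integral term converges by monotone convergence. For the terminal term I use Fatou's lemma: $\Phi^\varepsilon_{\theta_m}\to\Phi^\varepsilon_\zeta$ by continuity and $\Phi^\varepsilon\ge0$, so Fatou gives the inequality in the correct direction.

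\emph{Case $\gamma>1$.} Here $\Phi^\varepsilon\le0$ and $g_{EZ}\le0$. Monotone convergence still handles the integral term, since the integrand is one-signed. For the terminal term I need uniform integrability of $\{\Phi^\varepsilon_{\theta}\}$ over stopping times $\theta\le\zeta$. This follows from
\[
0\le-\Phi^\varepsilon_t\le -\mathcal V(t,\varepsilon X^{*,1}_t,Y_t)=|J^{*,\varepsilon}_t|
\]
together with the representation of $|J^{*,\varepsilon}|$ in \Cref{lem:candidate_proper_theta_greater_one}. That representation makes $|J^{*,\varepsilon}|$ a nonnegative supermartingale, namely a uniformly integrable martingale minus an increasing process, so it is of class (D) on bounded intervals.

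\emph{Step 2: passing to the consumption $C^{\xi,\pi,l}$.} Since $C^\varepsilon\ge C^{\xi,\pi,l}$ and $g_{EZ}$ is increasing in consumption, the inequality established for $C^\varepsilon$ also holds for $C^{\xi,\pi,l}$.

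\emph{Step 3: the liminf condition and the uniform bound.} I combine the domination $0\le|\Phi^\varepsilon_t|\le|J^{*,\varepsilon}_t|$ (valid for $\gamma>1$) with the exponential tail \eqref{eq:candidate_exponential_tail} at $t=0$:
\[
\mathbb E|J^{*,\varepsilon}_T|\le \frac{\varepsilon^{1-\gamma}}{|1-\gamma|}h(y)^\chi e^{-\underline\lambda T}.
\]
For $\gamma>1$ this gives $\mathbb E[\Phi^\varepsilon_t]\ge -C\varepsilon^{1-\gamma}e^{-\underline\lambda t}\to0$, so the liminf is nonnegative. It also gives $\varepsilon^{\gamma-1}\sup_t\mathbb E|\Phi^\varepsilon_t|\le h(y)^\chi/(\gamma-1)$. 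For $\gamma<1$ the liminf condition is immediate from $\Phi^\varepsilon\ge0$.

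\emph{Main obstacle: the uniform bound when $\gamma<1$.} The domination above now points the wrong way: $\Phi^\varepsilon$ dominates the candidate from above, so the candidate bound controls nothing. Instead I will take $\tau=0$, $\zeta=t$ in the supersolution inequality of Step~1 and drop the nonnegative integral term, which gives
\[
\mathbb E[\Phi^\varepsilon_t]\le\Phi^\varepsilon_0=\frac{(x+\varepsilon)^{1-\gamma}}{1-\gamma}h(y)^\chi .
\]
This is uniform in $t$. For each fixed $\varepsilon$ the factor $\varepsilon^{\gamma-1}$ is harmless, so $\varepsilon^{\gamma-1}\sup_t\mathbb E|\Phi^\varepsilon_t|<\infty$ holds pointwise in $\varepsilon$, which is what the statement requires. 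I expect the real delicacy to be the class-(D) justification in the $\gamma>1$ localisation, and I will check it carefully via the supermartingale representation of $|J^{*,\varepsilon}|$.
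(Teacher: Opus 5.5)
Your proof is correct and follows essentially the paper's route. The ingredients match: It\^o's formula with the HJB inequality for $(\pi^\varepsilon,l^\varepsilon)$, localisation removed by Fatou, monotonicity of $g_{EZ}$ in consumption, and the candidate estimates of \Cref{lem:candidate_proper_theta_greater_one} for transversality and the $L^1$ bound when $\gamma>1$. For Fatou you use $\Phi^\varepsilon\ge0$ when $\gamma<1$ and the lower bound $\Phi^\varepsilon_t\ge\varepsilon^{1-\gamma}J^{*,1}_t$ when $\gamma>1$, exactly as the paper does. You use the exponential tail where the paper uses the integrated aggregator, and either works.

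There are two comments.

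First, justify the class-(D) property by the domination $0\le|J^{*,\varepsilon}_\theta|\le\mathbb E_\theta\bigl[\int_0^\infty|g_{EZ}(s,C^{*,\varepsilon}_s,J^{*,\varepsilon}_s)|\,ds\bigr]$, not by the supermartingale property. Nonnegative supermartingales need not be of class (D), even on bounded intervals. This domination is what the paper uses, and your ``uniformly integrable martingale minus an increasing process'' decomposition gives it directly.

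Second, and more importantly, your separate argument for the uniform bound when $\gamma<1$, via $\mathbb E[\Phi^\varepsilon_t]\le\Phi^\varepsilon_0$, is genuinely needed. The paper deduces $\sup_t\mathbb E|\Phi^\varepsilon_t|\le\varepsilon^{1-\gamma}\mathbb E\bigl[\int_0^\infty|g_{EZ}(s,C^{*,1}_s,J^{*,1}_s)|\,ds\bigr]$ from the lower bound $\Phi^\varepsilon\ge\varepsilon^{1-\gamma}J^{*,1}$. That lower bound controls $|\Phi^\varepsilon|$ only when $\gamma>1$. For $\gamma<1$ the paper's displayed inequality already fails at $t=0$: its right-hand side equals $\varepsilon^{1-\gamma}h(y)^\chi/(1-\gamma)$, which is strictly less than $\Phi^\varepsilon_0$.

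Your pointwise-in-$\varepsilon$ reading of the final claim is also the only one available when $\gamma<1$. There $\Phi^\varepsilon_t\ge\mathcal V(t,X^{\xi,\pi,l}_t,Y_t)$, which does not vanish as $\varepsilon\downarrow0$, so no bound uniform in $\varepsilon$ can hold.
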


	\begin{proof} The It\^o and localisation arguments are the same as those in Step~4 of the proof of \Cref{thm:verification}. Indeed, these arguments use only the HJB
		equation, and the local integrability of the dynamics of \(X^\varepsilon\), and does not use the restriction \(0<\vartheta<1\). It therefore gives the localised version of
		\eqref{eq:perturbed_supersolution_inequality}.\smallskip
		
		It remains to justify the removal of the localisation and the
		transversality condition in the present regime. Since
		\(X_t^\varepsilon\ge\varepsilon X^{*,1}_t\), we have
		\begin{equation}
			\Phi_t^\varepsilon
			\ge
			e^{-\delta\vartheta t}
			\frac{(\varepsilon X^{*,1}_t)^{1-\gamma}}{1-\gamma}
			h(Y_t)^\chi
			=
			\varepsilon^{1-\gamma}J_t^{*,1}.
			\label{eq:perturbed_candidate_lower_bound}
		\end{equation}
		We first verify the uniform integrability needed to remove the
		localisation. For every bounded stopping time \(\tau\), the candidate
		utility equation in \eqref{eq:EZ_utility eq} gives
		\[
		|J_\tau^{*,1}|
		\le
		\mathbb E_\tau\left[
		\int_0^\infty
		\left|
		g_{EZ}(s,C_s^{*,1},J_s^{*,1})
		\right|\,ds
		\right].
		\]
		The random variable inside the conditional expectation is integrable by
		Lemma~\ref{lem:candidate_proper_theta_greater_one}. Therefore, the family
		of random variables on the right-hand side, indexed by bounded stopping
		times \(\tau\), is uniformly integrable.\smallskip
		
		If \(\gamma<1\), then the localised terminal values
		\(\Phi^\varepsilon\) are nonnegative. If \(\gamma>1\), then
		\eqref{eq:perturbed_candidate_lower_bound} and
		\(\Phi^\varepsilon\le0\) give $0
		\le
		-\Phi_\tau^\varepsilon
		\le
		\varepsilon^{1-\gamma}|J_\tau^{*,1}|.$ Thus, in both cases, conditional Fatou's lemma applies to the localised
		terminal values. Since the aggregator is one-signed, the integral terms
		converge by conditional monotone convergence. We may therefore remove
		the localisation and obtain
		\eqref{eq:perturbed_supersolution_inequality}.\smallskip
		
		It remains to verify the transversality condition in \eqref{eq:perturbed_supersolution_inequality}. If \(\gamma<1\), then
		\(\Phi_t^\varepsilon\ge0\) and the result is obvious. If \(\gamma>1\), then 	\eqref{eq:perturbed_candidate_lower_bound} and Lemma~\ref{lem:candidate_proper_theta_greater_one} yield $\lim_{t\to\infty}
		\mathbb E[\Phi_t^\varepsilon]
		=0$. \smallskip

		Finally, \(g_{EZ}\) is increasing in consumption. Since
		\(C^\varepsilon\ge C^{\xi,\pi,l}\), the supersolution inequality for
		\(C^\varepsilon\) also implies the corresponding inequality for \(C^{\xi,\pi,l}\). Inequality \eqref{eq:perturbed_candidate_lower_bound} implies that
		\[
		\sup_{t\ge0}\mathbb E\bigl[|\Phi_t^\varepsilon|\bigr]
		\le
		\varepsilon^{1-\gamma}
		\mathbb E\left[
		\int_0^\infty
		\left|
		g_{EZ}(s,C_s^{*,1},J_s^{*,1})
		\right|\,ds
		\right]
		<\infty.
		\]  
	\end{proof}
	
	Define
	\begin{equation}
		\Lambda_t:=\int_0^t\vartheta l^*(Y_s)\,ds,
		\qquad
		T_u:=\inf\{t\ge0:\Lambda_t\ge u\},\qquad u\ge0.
		\label{eq:candidate_discount_clock}
	\end{equation}
	By \eqref{eq:candidate_rate_lower_bound}, $T_u\le u/\underline\lambda$; moreover,
	$T_u\uparrow\infty$ as $u \to \infty$ by Step 2B of the proof of \Cref{lem:candidate_proper_theta_greater_one}.

	\begin{lemma}[\bf Comparison principle]
		\label{lem:time_change_comparison} Suppose that $\vartheta>1$, \Cref{assu. Qualitative Verification} holds and $h$ is a bounded positive solution of the HJB equation \eqref{eq: reduced HJB}. Let \(U\in\mathscr{P}([0,\infty))\), \(\mathcal V\) be the utility process solving \eqref{eq:positive_recursion_theta_greater_one} associated with $U$, and \(S\) be a strictly positive supersolution of \eqref{eq:positive_recursion_theta_greater_one} associated with $U$, in the sense of \cite[Definition B.2]{herdegen2025proper}. Suppose that there exist constants
		\(K,c>0\) such that
		\begin{equation*}
			qU_tS_t^{q-1}
			\le
			K \vartheta l^*(Y_t) e^{-c\Lambda_t},
			\qquad t\ge0.
		\end{equation*}
		Then $
		\mathcal V_t\le S_t$, for $t\ge0$, $\mathbb P$-a.s. 
	\end{lemma}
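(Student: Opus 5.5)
The plan is to adapt the comparison argument of \cite[Proposition~B.6]{herdegen2025proper} by running it in the random clock $\Lambda$ of \eqref{eq:candidate_discount_clock}. In that clock the state-dependent rate $\vartheta l^*(Y)$ becomes the unit rate, so the decay hypothesis $qU_tS_t^{q-1}\le K\vartheta l^*(Y_t)e^{-c\Lambda_t}$ turns into a genuinely exponential bound.

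\textbf{Step 1: linearise the difference.} Set $D_t:=\mathcal V_t-S_t$ and look at its positive part. For bounded stopping times $\tau\le\zeta$, $\mathcal V$ satisfies the utility identity and $S$ the supersolution inequality. Subtracting them gives
\[
D_\tau\le\mathbb E_\tau\Big[D_\zeta+\int_\tau^\zeta U_s\big(\mathcal V_s^q-S_s^q\big)\,ds\Big].
\]
Since $q\in(0,1)$ when $\vartheta>1$, the map $v\mapsto v^q$ is concave. Hence on $\{\mathcal V_s>S_s\}$ we have $\mathcal V_s^q-S_s^q\le qS_s^{q-1}(\mathcal V_s-S_s)$, and the term is nonpositive elsewhere. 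Using the hypothesis, this yields
\[
D^+_\tau\le\mathbb E_\tau\Big[D^+_\zeta+\int_\tau^\zeta K\vartheta l^*(Y_s)e^{-c\Lambda_s}D^+_s\,ds\Big].
\]
Here $D^+$ is obtained by conditioning on $\{D_\tau>0\}$ and using that $D\le D^+$.

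\textbf{Step 2: change time.} Take $\tau=T_u$ and $\zeta=T_{u'}$; these are bounded stopping times since $T_u\le u/\underline\lambda$. The substitution $v=\Lambda_s$ converts the integral into $\int_u^{u'}Ke^{-cv}D^+_{T_v}\,dv$. Let $f(u):=\mathbb E[\mathbf 1_A D^+_{T_u}]$ for a fixed $A\in\mathcal F_t$, working after time $t$ with the clock $T^t_u$ from Step~2B of \Cref{lem:candidate_proper_theta_greater_one}. Then $f(u)\le f(u')+K\int_u^{u'}e^{-cv}f(v)\,dv$. A backward Gronwall argument gives
\[
f(u)\le \exp\Big(\tfrac{K}{c}e^{-cu}\Big)\limsup_{u'\to\infty}f(u').
\]

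\textbf{Step 3: transversality.} I need $\mathbb E[D^+_{T_{u'}}]\to0$. Since $S>0$, we have $D^+\le\mathcal V$. By the utility equation, $\mathbb E[\mathcal V_{T_{u'}}]=\mathbb E\int_{T_{u'}}^\infty U_s\mathcal V_s^q\,ds\to0$, using dominated convergence and $T_{u'}\uparrow\infty$. Therefore $f\equiv0$. This gives $\mathcal V_{T^t_u}\le S_{T^t_u}$, in particular at $u=0$, that is, $\mathcal V_t\le S_t$ a.s. for each $t$. Right-continuity, or working at rational $t$, then upgrades this to indistinguishability.

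\textbf{Main obstacle.} The hardest part is Step~1's integrability. $S$ is only a supersolution in the sense of \cite[Definition B.2]{herdegen2025proper}, so subtracting requires $\int U_sS_s^q\,ds$ to have finite conditional expectation on $[T_u,T_{u'}]$. The intended route is to localise, use the supersolution inequality on $S$ (which is nonnegative in these coordinates), and let Fatou's lemma absorb the limit. The concavity bound also needs $D^+$ integrable, which follows from $D^+\le\mathcal V$ and the uniform integrability of $\mathcal V$.
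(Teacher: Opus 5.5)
Your proposal is correct and follows the same route as the paper. The random clock $\Lambda$ converts the hypothesis into $q\,\frac{U_{T_u}}{\vartheta l^*(Y_{T_u})}S_{T_u}^{q-1}\le Ke^{-cu}$ for the concave time-changed aggregator, and transversality comes from $(\mathcal V-S)^+\le\mathcal V$ together with $\mathbb E[\mathcal V_{T_u}]=\mathbb E\int_{T_u}^\infty U_s\mathcal V_s^q\,ds\to0$.

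The only difference is that you do the comparison step by hand: concavity linearisation plus a backward Gronwall argument, which closes precisely because $\int_u^\infty Ke^{-cv}\,dv<\infty$. The paper instead verifies \cite[Condition~B.4]{herdegen2025proper} and invokes \cite[Proposition~B.6]{herdegen2025proper}. The integrability obstacle you flag is harmless, since the supersolution inequality itself gives $\mathbb E_\tau\bigl[S_\zeta+\int_\tau^\zeta U_sS_s^q\,ds\bigr]\le S_\tau<\infty$.
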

	
	\begin{proof}	Let \(\sigma\le\tau\) be bounded stopping times with respect to
		\(\{\mathcal F_{T_u}\}_{u\ge0}\). Then \(T_\sigma\le T_\tau\)
		are bounded stopping times with respect to
		\(\{\mathcal F_t\}_{t\ge0}\). The change of variables \(u=\Lambda_s\), for which
		\(du=\vartheta l^*(Y_s)\,ds\) and $s=T_{\Lambda_s}=T_u$, gives 
		\[
		\mathcal V_{T_\sigma}
		=
		\mathbb E\left[
		\mathcal V_{T_\tau}
		+
		\int_\sigma^\tau
		\frac{U_{T_u}}{\vartheta l^*(Y_{T_u})} \mathcal V_{T_u}^q\,du
		\,\middle|\,
		\mathcal F_{T_\sigma}
		\right]\quad\text{and}\quad
		S_{T_\sigma}
		\ge
		\mathbb E\left[
		S_{T_\tau}
		+
		\int_\sigma^\tau
		\frac{U_{T_u}}{\vartheta l^*(Y_{T_u})} S_{T_u}^q\,du
		\,\middle|\,
		\mathcal F_{T_\sigma}
		\right].
		\]
		Since \(T_u\to\infty\) as $u \to \infty$, the corresponding transversality conditions also hold.\smallskip

		We next verify the integrability condition required for comparison.
		For every \(u\ge0\),
		\[
		0\le\mathcal V_{T_u}
		=
		\mathbb E\left[
		\int_{T_u}^\infty
		U_s\mathcal V_s^q\,ds
		\,\middle|\,
		\mathcal F_{T_u}
		\right]
		\le
		\mathbb E\left[
		\int_0^\infty
		U_s\mathcal V_s^q\,ds
		\,\middle|\,
		\mathcal F_{T_u}
		\right].
		\]
		The random variable $\int_0^\infty U_s\mathcal V_s^q\,ds$ is integrable because \(\mathcal V\) is a utility process.
		Consequently, \(\{\mathcal V_{T_u}\}_{u\ge0}\) is uniformly
		integrable. Moreover, the same change of variables gives
		\[
		\mathbb E\left[
		\int_0^\infty
		\frac{U_{T_u}}{\vartheta l^*(Y_{T_u})}\mathcal V_{T_u}^q\,du
		\right]
		=
		\mathbb E\left[
		\int_0^\infty
		U_s\mathcal V_s^q\,ds
		\right]
		<\infty.
		\]
		Furthermore, since $0
		\le
		(\mathcal V_{T_u}-S_{T_u})^+
		\le
		\mathcal V_{T_u}$, we have $\sup_{u\ge0}
		\mathbb E\left[
		(\mathcal V_{T_u}-S_{T_u})^+
		\right]
		<\infty.$ Thus, \cite[Condition~B.4]{herdegen2025proper} is satisfied for $\{\mathcal V_{T_u}\}_{u \geq 0}$ and $\{S_{T_u}\}_{u \geq 0}$.\smallskip
		
		The aggregator $
		\overline g(u,w):= \dfrac{U_{T_u}}{\vartheta l^*(Y_{T_u})} w^q$ is nondecreasing and concave in \(w\), since \(q\in(0,1)\). Its
		derivative with respect to $w$ at \(S_{T_u}\) satisfies
		\[
		\partial_w\overline g(u,S_{T_u})
		=
		q \frac{U_{T_u}}{\vartheta l^*(Y_{T_u})} S_{T_u}^{q-1}
		\le
		Ke^{-c\Lambda_{T_u}}
		=
		Ke^{-cu}.
		\]
		All the assumptions of
		\cite[Proposition~B.6]{herdegen2025proper} are therefore satisfied,
		and that proposition gives $\mathcal V_{T_\iota}\le S_{T_\iota}$, for any finite stopping time $\iota$ with respect to the
		time-changed filtration
		\(\mathcal F_{T_u}\). Taking \(\iota=\Lambda_t\) and using
		\(T_{\Lambda_t}=t\), we obtain the lemma.\end{proof}
	
	We next establish existence, properness, and monotonicity for processes
	dominated by \(U^{*,1}\).
	
	\begin{lemma}[\bf Existence of maximal proper solution]
		\label{lem:maximal_proper_solution_candidate_domination} Suppose that $\vartheta>1$, \Cref{assu. Qualitative Verification} holds and $h$ is a bounded positive solution of the HJB equation \eqref{eq: reduced HJB}. Let \(U \in \mathscr{P}([0,\infty))\) be such that 
		\begin{equation}
			\text{(1). it is right-continuous},\qquad
			\text{(2). there is \(K<\infty\) such that $U_t\le K U_t^{*,1}$ for $t\ge0$, $\mathbb P$-a.s.} 
			\label{eq:candidate_domination}
		\end{equation}
		Then the utility equation \eqref{eq:positive_recursion_theta_greater_one} associated with $U$ admits a maximal
		solution \(\overline{\mathcal V}\): if \(\mathcal V\) is any other solution, then $ \mathcal V_t\le\overline{\mathcal V}_t$ for $t\ge0$, $\mathbb P$-a.s. Moreover, \(\overline{\mathcal V}\) is proper. Finally, if \(0\leq U^1\le U^2\) and both processes satisfy
		\eqref{eq:candidate_domination}, then their maximal solutions satisfy $
		\overline{\mathcal V}^{\,1}_t
		\le
		\overline{\mathcal V}^{\,2}_t$ for $t\ge0$, $\mathbb P$-a.s.
	\end{lemma}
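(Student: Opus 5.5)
The plan is to build \(\overline{\mathcal V}\) as the decreasing limit of finite-horizon Picard-type iterates started from an explicit supersolution. We then use Lemma~\ref{lem:time_change_comparison} to show that this limit dominates every solution, and use the candidate \(\mathcal V^{*,1}\) to control properness.

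\textbf{Step 1 (supersolution and sandwich).} By domination \eqref{eq:candidate_domination} and homogeneity (\(U^{*,1}\) corresponds to \(\mathcal V^{*,1}\), and \(\lambda^{1-\gamma}\) scaling), the process \(S:=K^{\vartheta}\mathcal V^{*,1}\) solves the equation for \(K U^{*,1}\). Indeed, \(KU^{*,1}(K^\vartheta\mathcal V^{*,1})^q=K^{1+\vartheta q}U^{*,1}(\mathcal V^{*,1})^q=K^\vartheta U^{*,1}(\mathcal V^{*,1})^q\) since \(1+\vartheta q=\vartheta\). Hence \(S\) is a strictly positive supersolution for \(U\), and \eqref{eq:candidate_exponential_tail} gives its transversality.

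\textbf{Step 2 (construction).} Define \(\mathcal V^{(0)}:=S\) and \(\mathcal V^{(k+1)}_t:=\mathbb E_t[\int_t^\infty U_s(\mathcal V^{(k)}_s)^q\,ds]\). Since \(w\mapsto w^q\) is increasing (\(q\in(0,1)\)), induction gives \(0\le\mathcal V^{(k+1)}\le\mathcal V^{(k)}\le S\). Integrability follows from \(U S^q\le K^\vartheta U^{*,1}(\mathcal V^{*,1})^q\), which is integrable by Lemma~\ref{lem:candidate_proper_theta_greater_one}. Monotone convergence then yields a limit \(\overline{\mathcal V}\) solving \eqref{eq:positive_recursion_theta_greater_one}. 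A càdlàg version is taken via the right-continuity of \(U\) and standard optional-projection arguments.

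\textbf{Step 3 (maximality).} For any other solution \(\mathcal V\), I first show \(\mathcal V\le S\) using Lemma~\ref{lem:time_change_comparison} with supersolution \(S\). The derivative condition holds because
\[qUS^{q-1}\le qK^{\vartheta q+1}U^{*,1}(\mathcal V^{*,1})^{q-1}=qK^{\vartheta}\vartheta l^*(Y)\mathcal V^{*,1}/\mathcal V^{*,1}.\]
This is bounded by a constant times \(\vartheta l^*\), but lacks the factor \(e^{-c\Lambda}\). This is the main obstacle. I would first try to recover the decay by replacing \(S\) with \(S^\lambda:=\lambda S\) for \(\lambda>1\), which is a strict supersolution with slack \(\lambda(1-\lambda^{q-1})K^\vartheta\vartheta l^*\mathcal V^{*,1}\). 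In the clock \(u=\Lambda_t\), the equation for the ratio \(\mathcal V/S\) then becomes a Riccati-type inequality as in Step~2B of Lemma~\ref{lem:candidate_proper_theta_greater_one}. The ODE bound \eqref{eq:proper_ratio_clock_ode_bound} shows that \(\{\mathcal V/S>1\}\) forces exponential growth of \(e^{-u}m(u)\), contradicting the transversality of \(\mathcal V\). This argument gives \(\mathcal V\le S\) directly, without needing the decay factor. Once \(\mathcal V\le S=\mathcal V^{(0)}\), induction using the solution property gives \(\mathcal V\le\mathcal V^{(k)}\) for all \(k\), hence \(\mathcal V\le\overline{\mathcal V}\).

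\textbf{Step 4 (properness and monotonicity).} For properness, on \(\{\mathbb E_t\int_t^\infty U_s\,ds>0\}\) I would compare with a small multiple of the candidate restricted to where \(U\) is positive. If this is awkward, I would follow the Step~2A hitting argument: \(\overline{\mathcal V}_t=0\) on a positive-probability set while future utility is charged forces \(\overline{\mathcal V}\equiv0\) after \(t\), and then the maximal solution of the restarted problem, which is nonzero by the analogue of \cite[Theorem~4.9]{herdegen2025proper}, contradicts maximality. Monotonicity is immediate from the construction: with the same starting point \(S\) (taking the larger \(K\)), \(U^1\le U^2\) gives \(\mathcal V^{1,(k)}\le\mathcal V^{2,(k)}\) by induction. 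The limits are the maximal solutions by Step~3, since maximality makes the construction independent of the choice of \(K\).
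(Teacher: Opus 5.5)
Your construction by decreasing iteration from $S=K^\vartheta\mathcal V^{*,1}$ is sound, and so are the reduction of maximality to the bound $\mathcal V\le S$ and the monotonicity argument. For the bound itself you take a different route from the paper, and it is valid.

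\textbf{How your maximality route differs.} The paper applies \Cref{lem:time_change_comparison} to the perturbed supersolutions $S^\zeta=k(1+\zeta e^{\nu\Lambda_t})\mathcal V^{*,1}$. Their built-in growth produces the decay factor $e^{-(1-q)\nu\Lambda_t}$ in the bound on $qU(S^\zeta)^{q-1}$, and the paper then lets $\zeta\downarrow0$. You instead rerun the ratio argument of Step~2B of \Cref{lem:candidate_proper_theta_greater_one}. Write $\varphi_s:=U_s/(KU^{*,1}_s)\in[0,1]$ and $Q=\mathcal V/S$. Then the clock identity reads $Q_t=\widehat{\mathbb E}_t[e^{-u}Q_{T_u^t}+\int_0^u e^{-v}\varphi_{T_v^t}Q_{T_v^t}^q\,dv]$. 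Since $\varphi\le1$, this still gives $m'\ge m-m^q$, and the contradiction with $\mathbb E[\mathbf 1_{B'}\mathcal V_{T_u^t}]\to0$ goes through. Two small corrections: what you obtain is a positive lower bound on $e^{-u}m(u)$, not growth of it, and the $\lambda S$ detour is unnecessary. Your route avoids the comparison lemma. However, it only handles supersolutions that are multiples of $\mathcal V^{*,1}$, whereas the paper's lemma is reused later for $\Psi^\varepsilon$.

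\textbf{The gap is properness.} Since $q\in(0,1)$, $\mathcal V\equiv0$ always solves \eqref{eq:positive_recursion_theta_greater_one}. So showing that $\overline{\mathcal V}$ vanishes after $t$ on some event gives no contradiction. You need a competing solution, or subsolution, that is strictly positive there, and neither of your suggestions supplies one.
\begin{itemize}
\item A multiple $c\,\mathcal V^{*,1}$ has drift $-c\,\vartheta l^*(Y)\mathcal V^{*,1}$. It is therefore a subsolution only if $U\ge c^{1/\vartheta}U^{*,1}$ throughout, and ``restricting it to where $U$ is positive'' destroys this inequality.
\item The ``restarted problem'' is circular. Its maximal solution is $\overline{\mathcal V}$ itself, so invoking an analogue of \cite[Theorem~4.9]{herdegen2025proper} assumes what is to be proved, without checking hypotheses.
\item The Step~2A hitting argument you cite takes properness of the competitor as a hypothesis, so it cannot produce properness.
\end{itemize}
Your argument also never uses hypothesis (1), right-continuity of $U$, which is exactly what the missing step needs.

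\textbf{The missing step.} By \cite[Lemma~6.3]{herdegen2025proper}, the event $\{\mathbb E_t[\int_t^\infty U_s^\vartheta\,ds]>0\}$ is covered, up to a null set, by the sets $B_{T,\varepsilon}=\{\mathbb E_t[\mathbf 1_{\{U_T\ge\varepsilon\}}]>0\}$ with rational $T\ge t$ and $\varepsilon>0$. Right-continuity gives $\varrho:=\inf\{s\ge T:U_s\le\varepsilon/2\}>T$ on $\{U_T\ge\varepsilon\}$. Hence the bounded minorant $\widetilde U_s=\frac{\varepsilon}{2}e^{-s}\mathbb E[\mathbf 1_{\{U_T\ge\varepsilon\}}\mid\mathcal F_s]\mathbf 1_{\{T\le s<\varrho\}}$ satisfies $\widetilde U\le U$. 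By the construction in the proof of \cite[Theorem~4.7]{herdegen2025proper}, $\widetilde U$ has a solution $\widetilde{\mathcal V}$ with $\widetilde{\mathcal V}_t>0$ on $B_{T,\varepsilon}$. Your maximality and monotonicity results then give $\widetilde{\mathcal V}\le\overline{\mathcal V}^{\,\widetilde U}\le\overline{\mathcal V}$, which yields properness.
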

	
	\begin{proof}
		Set $k:=\max\{1,K^\vartheta\}.$ Fix $0<\nu<1-q$ and $\zeta>0,$ and define
		\[
		S_t^\zeta
		:=
		k\bigl(1+\zeta e^{\nu\Lambda_t}\bigr)
		\mathcal V_t^{*,1}.
		\]
		\noindent\textbf{Step 1. $S^\zeta$ is a supersolution:}
		We first prove that $S^\zeta$ is a supersolution for the utility equation
		\eqref{eq:positive_recursion_theta_greater_one} associated with $U$ in the sense of \cite[Definition B.2]{herdegen2025proper}. Since \(q\in(0,1)\), we have $(1+r)^q\le1+qr$ for $r\geq 0$. Because \(q<1-\nu\), it follows that
		\begin{align}
			\bigl(1+\zeta e^{\nu\Lambda_t}\bigr)^q
			\le
			1+q\zeta e^{\nu\Lambda_t}
			\le
			1+(1-\nu)\zeta e^{\nu\Lambda_t}.
			\label{53243}
		\end{align}
		From \eqref{eq:J_SDE} and \(d\Lambda_t=\vartheta l^*(Y_t)\,dt\), the product rule gives
		\[
		dS_t^\zeta
		=
		-k\vartheta l^*(Y_t)\mathcal V_t^{*,1}
		\bigl[
		1+(1-\nu)\zeta e^{\nu\Lambda_t}
		\bigr]\,dt
		+
		S_t^\zeta\,dN_t.
		\]
		On the other hand, since \(Kk^{q-1}\le1\), we use \eqref{53243}, \eqref{eq:candidate_domination}, and $U_t^{*,1}(\mathcal V_t^{*,1})^q=\vartheta l^*(Y_t)\mathcal V_t^{*,1}$ to obtain
		\begin{align}
			U_t(S_t^\zeta)^q
			\le
			Kk^q U_t^{*,1}(\mathcal V_t^{*,1})^q
			\bigl(1+\zeta e^{\nu\Lambda_t}\bigr)^q
			\le
			k\vartheta l^*(Y_t)\mathcal V_t^{*,1}
			\bigl[
			1+(1-\nu)\zeta e^{\nu\Lambda_t}
			\bigr].\label{5347}
		\end{align}
		Consequently,
		\begin{equation}
			dS_t^\zeta
			\le
			-U_t(S_t^\zeta)^q\,dt
			+
			S_t^\zeta\,dN_t.
			\label{eq:perturbed_candidate_differential_inequality}
		\end{equation}
		
		As the candidate process $\mathcal{V}^{*,1}$ satisfies \eqref{eq:J_SDE}, we have $S_t^\zeta
		=
		k\mathcal V_0^{*,1}\mathcal E(N)_t
		\left[
		e^{-\Lambda_t}
		+
		\zeta e^{-(1-\nu)\Lambda_t}
		\right].$ Therefore, by Bayes' formula and
		\(\Lambda_t\ge\underline\lambda t\),
		\[
		\mathbb E[S_t^\zeta]
		\le
		k\mathcal V_0^{*,1}
		\left[
		e^{-\underline\lambda t}
		+
		\zeta e^{-(1-\nu)\underline\lambda t}
		\right]
		\longrightarrow0 \qquad \text{as $t \to \infty$}.
		\]
		Furthermore, we use the definition of $S^\zeta$ and \eqref{eq:candidate_domination} to obtain
		\begin{align*}
			\mathbb E\left[
			\int_0^\infty U_t(S_t^\zeta)^q\,dt
			\right]
			\le
			Kk^q\mathcal V_0^{*,1}
			\int_0^\infty
			e^{-u}(1+\zeta e^{\nu u})^q\,du
			<\infty,
		\end{align*}
		where the substitution \(u=\Lambda_t\) is used under
		\(\widehat{\mathbb P}\). The last integral is finite because
		\(\nu q<1\). Integrating
		\eqref{eq:perturbed_candidate_differential_inequality} between
		bounded stopping times, after localising its local-martingale term,
		and then applying conditional Fatou's lemma proves that
		\(S^\zeta\) is a supersolution for the integral equation
		\eqref{eq:positive_recursion_theta_greater_one}.\smallskip

		\noindent\textbf{Step 2. Maximal solution:} 
		We now construct the maximal solution. Set $\mathcal V^0:=k\mathcal V^{*,1}$ and define successively
		\begin{equation}
			\mathcal V_t^{m+1}
			:=
			\mathbb E_t\left[
			\int_t^\infty
			U_s(\mathcal V_s^m)^q\,ds
			\right],
			\qquad m\ge0.
			\label{eq:monotone_iteration_maximal_solution}
		\end{equation}
		Since \(1-q=1/\vartheta\), we have
		\(Kk^q\le k\). Using \eqref{eq:candidate_domination} and \Cref{lem:candidate_proper_theta_greater_one}, we obtain
		\begin{align}
			\mathbb E_t\left[
			\int_t^\infty
			U_s(k\mathcal V_s^{*,1})^q\,ds
			\right]
			\le
			Kk^q
			\mathbb E_t\left[
			\int_t^\infty
			U_s^{*,1}(\mathcal V_s^{*,1})^q\,ds
			\right]  
			\le
			k\mathcal V_t^{*,1}.
			\label{eq:multiple_candidate_supersolution}
		\end{align} 
		By \eqref{eq:multiple_candidate_supersolution},
		\(\mathcal V^1\le\mathcal V^0\). Since \(w\mapsto w^q\) is
		increasing, induction gives
		\[
		0\le\mathcal V^{m+1}\le\mathcal V^m
		\le k\mathcal V^{*,1}.
		\]
		Hence, $
		U_s(\mathcal V_s^m)^q
		\le
		U_s(k\mathcal V_s^{*,1})^q,$ and the process on the right is integrable by
		\eqref{eq:multiple_candidate_supersolution}. Conditional dominated
		convergence therefore shows that
		\[
		\overline{\mathcal V}_t
		:=
		\lim_{m\to\infty}\mathcal V_t^m
		\]
		satisfies \eqref{eq:positive_recursion_theta_greater_one}. The conditional
		expectation on the right-hand side of that equation provides a
		càdlàg version of \(\overline{\mathcal V}\).\smallskip
		
		Let \(\mathcal V\) be any other solution of \eqref{eq:positive_recursion_theta_greater_one}. Moreover, similarly to \eqref{5347}, we have $	qU_t(S_t^\zeta)^{q-1}
		\le
		qKk^{q-1}\vartheta l^*(Y_t)
		\bigl(1+\zeta e^{\nu\Lambda_t}\bigr)^{q-1}
		\le
		q\zeta^{q-1}\vartheta l^*(Y_t)
		e^{-(1-q)\nu\Lambda_t}.$ 	Lemma~\ref{lem:time_change_comparison}, applied with
		\(c=(1-q)\nu\), therefore gives $
		\mathcal V_t\le S_t^\zeta$ for every solution \(\mathcal V\) of
		\eqref{eq:positive_recursion_theta_greater_one}. Letting
		\(\zeta\downarrow0\) yields
		\begin{equation}
			\mathcal V_t
			\le
			k\mathcal V_t^{*,1},
			\qquad t\ge0.
			\label{eq:all_solutions_candidate_bound}
		\end{equation}  
		Then
		\eqref{eq:all_solutions_candidate_bound} gives
		\(\mathcal V\le\mathcal V^0\). Using
		\eqref{eq:positive_recursion_theta_greater_one} and
		\eqref{eq:monotone_iteration_maximal_solution} inductively yields
		\[
		\mathcal V\le\mathcal V^m.
		\]
		Letting \(m\to\infty\), we obtain
		\(\mathcal V\le\overline{\mathcal V}\). Hence
		\(\overline{\mathcal V}\) is maximal.\smallskip
		
		\noindent\textbf{Step 3. Monotonicity:} We next prove monotonicity. Let \(0\leq U^1\le U^2\), where
		\(U^i\le K_iU^{*,1}\), and choose
		\[
		\Gamma\ge\max\{1,K_1^\vartheta,K_2^\vartheta\}.
		\]
		Starting the iteration
		\eqref{eq:monotone_iteration_maximal_solution} for \(U^2\) from
		\(\Gamma\mathcal V^{*,1}\), let us denote the resulting sequence by
		\(\{\mathcal V^{2,m}\}_{m\ge0}\). Since
		\(\overline{\mathcal V}^{\,1}\le \Gamma\mathcal V^{*,1}\), we have
		\[
		\overline{\mathcal V}^{\,1}_t
		=
		\mathbb E_t\left[
		\int_t^\infty
		U_s^1(\overline{\mathcal V}_s^{\,1})^q\,ds
		\right]
		\le
		\mathbb E_t\left[
		\int_t^\infty
		U_s^2(\overline{\mathcal V}_s^{\,1})^q\,ds
		\right]
		\le
		\mathcal V^{2,1}_t.
		\]
		Induction gives
		\(\overline{\mathcal V}^{\,1}\le\mathcal V^{2,m}\) for every \(m\). The same dominated-convergence argument as in Step~2 shows that
		\(\displaystyle\lim_{m\to\infty}\mathcal V^{2,m}\) is a solution associated with \(U^2\). Since
		\(\overline{\mathcal V}^{\,2}\) is maximal, we have $\displaystyle\lim_{m\to\infty}\mathcal V^{2,m}\le\overline{\mathcal V}^{\,2}.$
		Conversely, since
		\[
		\overline{\mathcal V}^{\,2}
		\le
		\max\{1,K_2^\vartheta\}\mathcal V^{*,1}
		\le
		\Gamma\mathcal V^{*,1},
		\]
		the utility equation \eqref{eq:positive_recursion_theta_greater_one} and induction give
		\(\overline{\mathcal V}^{\,2}\le\mathcal V^{2,m}\) for every \(m\).
		Hence,
		\(\overline{\mathcal V}^{\,2}\le \displaystyle\lim_{m\to\infty}\mathcal V^{2,m}\), and therefore
		\(\displaystyle\lim_{m\to\infty}\mathcal V^{2,m}=\overline{\mathcal V}^{\,2}\). Letting \(m\to\infty\) in
		\(\overline{\mathcal V}^{\,1}\le\mathcal V^{2,m}\) now yields $\overline{\mathcal V}^{\,1}
		\le
		\overline{\mathcal V}^{\,2}$. \smallskip
		
		\noindent\textbf{Step 4. Properness:} It remains to prove properness. Fix \(t\ge0\). By
		\cite[Lemma~6.3]{herdegen2025proper},
		\[
		\mathbb P\left(
		\left\{
		\mathbb E_t\left[
		\int_t^\infty U_s^\vartheta\,ds
		\right]>0
		\right\}
		\setminus
		\bigcup_{T,\varepsilon}B_{T,\varepsilon}
		\right)
		=0,
		\]
		where $B_{T,\varepsilon}
		:=
		\left\{
		\mathbb E_t
		\left[
		\mathbf 1_{\{U_T\ge\varepsilon\}}
		\right]>0
		\right\},$ and the union is taken over $T\in\mathbb Q\cap[t,\infty)$ and $\varepsilon\in\mathbb Q\cap(0,\infty).$ Fix such \(T\) and \(\varepsilon\), and define
		\[ 
		\varrho
		:=
		\inf\{s\ge T:U_s\le\varepsilon/2\}.
		\]
		The right-continuity of \(U\) implies that $\varrho>T$ on $\{U_T\ge\varepsilon\}$. Define
		\[
		\widetilde U_s
		:=
		\frac{\varepsilon}{2}e^{-s}
		\mathbb E\left[
		\mathbf 1_{\{U_T\ge\varepsilon\}}
		\mid\mathcal F_s
		\right]
		\mathbf 1_{\{T\le s<\varrho\}}.
		\]
		On \(\{T\le s<\varrho\}\), we have \(U_s>\varepsilon/2\), while $0\le
		\widetilde U_s
		\le
		\varepsilon/2.$ Thus \(\widetilde U\le U\). The construction in the proof of
		\cite[Theorem~4.7]{herdegen2025proper} yields a solution
		\(\widetilde{\mathcal V}\) associated with \(\widetilde U\) such
		that
		\begin{align}
			\widetilde{\mathcal V}_t>0
			\quad\text{on }B_{T,\varepsilon}.
			\label{5543}
		\end{align}
		Since $\widetilde U\le U\le K U^{*,1}$, the preceding construction gives maximal solutions
		\(\overline{\mathcal V}^{\,\widetilde U}\) and
		\(\overline{\mathcal V}\) associated with $\widetilde U$ and $U$, respectively. By maximality and the monotonicity
		already proved, $
		\widetilde{\mathcal V}
		\le
		\overline{\mathcal V}^{\,\widetilde U}
		\le
		\overline{\mathcal V}.$ Therefore, \eqref{5543} implies $\overline{\mathcal V}_t>0$ on $B_{T,\varepsilon}$. Taking the countable union over \(T\) and \(\varepsilon\) gives
		\[
		\mathbb E_t\left[
		\int_t^\infty U_s^\vartheta\,ds
		\right]>0
		\quad\Longrightarrow\quad
		\overline{\mathcal V}_t>0,
		\qquad\mathbb P\text{-a.s.}
		\]
	\end{proof}
	
	\subsubsection{Verification}

	For \(\gamma>1\), define
	\begin{align*}
		&\mathcal A^\dagger_{\mathrm{ext}}(x,y)
		:=
		\left\{
		(\pi,l)\in\mathcal A^\dagger(x,y):\begin{aligned}
			& \mathbb P\pig(
			\mathcal V_t
			\le
			(1-\gamma)J_t^{\xi,\pi,l}
			\text{ for all }t\ge0
			\pig)=1 \text{ for every subsolution}\\[-1mm]
			& \text{\(\mathcal V\geq 0\) associated with \(U^{\xi,\pi,l}\) with $\sup_{t\ge0}\mathbb E[\mathcal V_t]<\infty$}
		\end{aligned}
		\right\},
		\\
		&\mathcal A^\dagger_{\mathrm{env}}(x,y)
		:=
		\Bigl\{
		(\pi,l)\in\mathcal A^\dagger(x,y):
		\text{\(\exists\, K>0\) such that }
		\mathbb P\pig(
		U_t^{\xi,\pi,l}\!
		\le
		K U_t^{*,1}
		\text{ for all }t\ge0
		\pig)=1
		\Bigr\}.
	\end{align*}
	The subsolutions in the definition of
	$\mathcal A^\dagger_{\mathrm{ext}}(x,y)$ are understood in
	the sense of \cite[Appendix~B]{herdegen2025proper}.  Finally, set
	\begin{equation}
		\mathcal A^\dagger_{\mathrm{ver}}(x,y):=
		\begin{cases}
			\mathcal A^\dagger(x,y),&\gamma<1,\\
			\mathcal A^\dagger_{\mathrm{ext}}(x,y)\cup
			\mathcal A^\dagger_{\mathrm{env}}(x,y),&\gamma>1.
		\end{cases}
		\label{eq:verification_class_theta_greater_one}
	\end{equation}
	
	\begin{theorem}[\bf Verification for $\vartheta>1$]
		\label{thm:verification_theta_greater_one}
		Suppose that $\vartheta>1$, \Cref{assu. Qualitative Verification} holds, and $h$ is a bounded positive solution of the HJB equation \eqref{eq: reduced HJB}. For every $x>0$ and
		$y\in E$, we have $
		\displaystyle\sup_{(\pi,l)\in\mathcal A^\dagger_{\mathrm{ver}}(x,y)}
		J_0^{\xi,\pi,l}
		=\frac{x^{1-\gamma}}{1-\gamma}h(y)^\chi$ and the supremum is attained by the feedback controls in \eqref{eq:feedback_controls}. 
	\end{theorem}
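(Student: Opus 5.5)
The proof splits into attainment and an upper bound. For attainment, \Cref{lem:candidate_proper_theta_greater_one} shows that $(\pi^*(Y),l^*(Y))\in\mathcal A^\dagger(x,y)$ and that its unique proper utility process is $J^{*,x}$, with $J^{*,x}_0=\frac{x^{1-\gamma}}{1-\gamma}h(y)^\chi$. For $\gamma>1$ the feedback control must also lie in $\mathcal A^\dagger_{\mathrm{ver}}(x,y)$. This holds because $U^{C^{*,x}}_t=x^{1-1/\psi}U^{*,1}_t$, so the control belongs to $\mathcal A^\dagger_{\mathrm{env}}(x,y)$ with $K=x^{1-1/\psi}$. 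Hence the supremum is at least $\frac{x^{1-\gamma}}{1-\gamma}h(y)^\chi$.

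For the upper bound, fix $(\pi,l)$ in the verification class and $\varepsilon>0$. By \Cref{lem:perturbed_supersolution_theta_greater_one}, $\Phi^\varepsilon$ is a supersolution of the original equation for $C^{\xi,\pi,l}$ and satisfies the transversality inequality. The goal is $J^{\xi,\pi,l}_0\le\Phi^\varepsilon_0=\frac{(x+\varepsilon)^{1-\gamma}}{1-\gamma}h(y)^\chi$; letting $\varepsilon\downarrow0$ then finishes the proof.

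\textbf{Case $\gamma<1$.} Pass to positive coordinates $\mathcal V:=(1-\gamma)J^{\xi,\pi,l}$ and $S:=(1-\gamma)\Phi^\varepsilon>0$. Then $S$ is a strictly positive supersolution of \eqref{eq:positive_recursion_theta_greater_one} for $U^{\xi,\pi,l}$, and $\mathcal V$ is a solution. Invoking \Cref{lem:time_change_comparison} directly would require the derivative bound $qU_tS_t^{q-1}\le K\vartheta l^*(Y_t)e^{-c\Lambda_t}$, which need not hold for arbitrary $U$. Instead I will truncate. Set $U^{(K)}:=U^{\xi,\pi,l}\wedge KU^{*,1}$; this is right-continuous because both processes are. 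By \Cref{lem:maximal_proper_solution_candidate_domination}, $U^{(K)}$ has a maximal proper solution $\overline{\mathcal V}^{K}$, monotone in $K$. For $U^{(K)}$ I will compare $\overline{\mathcal V}^K$ with a perturbation of $S$ such as $S+\zeta k\mathcal V^{*,1}e^{\nu\Lambda}$, reproducing the estimate from Step~1 of \Cref{lem:maximal_proper_solution_candidate_domination} to obtain the derivative bound. This should give $\overline{\mathcal V}^K\le S$. I then let $K\to\infty$: the limit $\lim_K\overline{\mathcal V}^K$ solves the equation for $U^{\xi,\pi,l}$ by monotone convergence, it is proper since it dominates proper solutions, and uniqueness of the proper utility process identifies it with $\mathcal V$. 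Therefore $\mathcal V\le S$. I expect this passage, namely the limit in $K$, the verification that the limit is proper, and the justification of the derivative bound for the perturbed supersolution, to be the main obstacle.

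\textbf{Case $\gamma>1$.} In positive coordinates, $-(1-\gamma)\Phi^\varepsilon=|\Phi^\varepsilon|$ is a nonnegative subsolution for $U^{\xi,\pi,l}$, because multiplication by $1-\gamma<0$ reverses the inequality. By \Cref{lem:perturbed_supersolution_theta_greater_one} it satisfies $\sup_t\mathbb E|\Phi^\varepsilon_t|<\infty$. If $(\pi,l)\in\mathcal A^\dagger_{\mathrm{ext}}(x,y)$, the defining property of that class gives $(1-\gamma)\Phi^\varepsilon\le(1-\gamma)J^{\xi,\pi,l}$, that is, $J^{\xi,\pi,l}\le\Phi^\varepsilon$. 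If $(\pi,l)\in\mathcal A^\dagger_{\mathrm{env}}(x,y)$, then $U^{\xi,\pi,l}\le KU^{*,1}$. The unique proper utility is the maximal solution $\overline{\mathcal V}$ from \Cref{lem:maximal_proper_solution_candidate_domination}, and $\overline{\mathcal V}\le k\mathcal V^{*,1}$. The subsolution $|\Phi^\varepsilon|$ must then be shown to be bounded above by $\overline{\mathcal V}$. I plan to do this by running the monotone iteration \eqref{eq:monotone_iteration_maximal_solution} from a starting point dominating both $|\Phi^\varepsilon|$ and $k\mathcal V^{*,1}$, for instance $\Gamma\mathcal V^{*,1}$ with $\Gamma$ large. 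The bound $|\Phi^\varepsilon|\le\varepsilon^{1-\gamma}|1-\gamma|\,|J^{*,1}|$ needed for this does not follow from \eqref{eq:perturbed_candidate_lower_bound}, which controls $\Phi^\varepsilon$ through the $\varepsilon X^{*,1}$ part alone, so I will instead obtain it from the comparison of \Cref{lem:time_change_comparison}. Induction then keeps $|\Phi^\varepsilon|$ below each iterate, and the iterates' limit equals $\overline{\mathcal V}$ by the argument in Step~3 of that lemma. This again yields $J^{\xi,\pi,l}_0\le\Phi^\varepsilon_0$.
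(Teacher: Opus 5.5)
You follow the paper for the attainment step and for $\gamma>1$. The case $\gamma<1$, however, has a genuine gap at the step you flagged.

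\textbf{The gap.} The derivative bound itself is easy. With $P:=\zeta k\mathcal V^{*,1}e^{\nu\Lambda}$ you trivially get $qU^{(K)}(S+P)^{q-1}\le qU^{(K)}P^{q-1}\le qK(\zeta k)^{q-1}\vartheta l^*e^{-(1-q)\nu\Lambda}$. The real issue is that $S+P$ must still be a supersolution for $U^{(K)}$, and Step~1 of \Cref{lem:maximal_proper_solution_candidate_domination} does not transfer.

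Since $dP=-(1-\nu)\vartheta l^*P\,dt+P\,dN$, using only the supersolution inequality of $S$ you need $U^{(K)}\bigl[(S+P)^q-S^q\bigr]\le(1-\nu)\vartheta l^*P$. For small $\zeta$ this is essentially $qU^{(K)}S^{q-1}\le(1-\nu)\vartheta l^*$. In Step~1 this holds because the base is $k\mathcal V^{*,1}$ with $Kk^{q-1}\le1$. For your $S$ you only know $S\ge\varepsilon^{1-\gamma}\mathcal V^{*,1}$. That gives $qU^{(K)}S^{q-1}\le qK\varepsilon^{-(1-1/\psi)}\vartheta l^*$, which is far too large when $K$ is large or $\varepsilon$ is small.

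This size genuinely occurs for $S=\Psi^\varepsilon$, since $U^{\xi,\pi,l}(\Psi^\varepsilon)^{q-1}=\vartheta l^*\bigl(l_tX^{\xi,\pi,l}_t/(l^*(Y_t)X^\varepsilon_t)\bigr)^{1-1/\psi}$. The obvious repairs do not work either:
\begin{itemize}
\item Enlarging $\zeta k$ to restore the drift prevents $P\to0$. You would only get $\overline{\mathcal V}^K\le S+c\,\mathcal V^{*,1}e^{\nu\Lambda}$ with $c$ bounded away from $0$.
\item A multiplicative perturbation $S(1+\zeta e^{\nu\Lambda})$ fails, for an abstract supersolution, wherever $U$ vanishes.
\item You cannot drop the perturbation, because $U^{(K)}=U^{\xi,\pi,l}\wedge KU^{*,1}$ has no decay in $\Lambda$.
\end{itemize}

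\textbf{The missing idea.} Put the decay into the truncation rather than into the supersolution. The paper takes $C^0:=e^{-\nu\Lambda/(1-1/\psi)}C^{*,1}$ and $C^n:=C^{\xi,\pi,l}\wedge nC^0$, so that $U^{C^n}\le n^{1-1/\psi}e^{-\nu\Lambda}U^{*,1}$. Then $\Psi^\varepsilon\ge\varepsilon^{1-\gamma}\mathcal V^{*,1}$ alone gives $qU^{C^n}(\Psi^\varepsilon)^{q-1}\le q(n/\varepsilon)^{1-1/\psi}\vartheta l^*e^{-\nu\Lambda}$. So \Cref{lem:time_change_comparison} applies to the unperturbed $\Psi^\varepsilon$. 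Your remaining steps then go through unchanged: the monotone limit, properness (the cap is strictly positive), and identification by unique properness.

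Your additive perturbation can also be rescued, but only by using the HJB structure of $\Psi^\varepsilon$, not just its supersolution property. Set $a:=1-1/\psi$ and $r_t:=l^\varepsilon_t/l^*(Y_t)$. Keeping the consumption part of the Hamiltonian gap in It\^o's formula gives $d\Psi^\varepsilon_t\le-\vartheta l^*(Y_t)(1-a+ar_t)\Psi^\varepsilon_t\,dt+dM_t$, while $U^{C^\varepsilon}(\Psi^\varepsilon)^{q-1}=\vartheta l^*r^a$. Since $a+q=1-\gamma/\vartheta<1$, weighted AM--GM gives $r^a(1+\rho)^q\le ar+q\rho+1-a$ with $\rho:=P/\Psi^\varepsilon$. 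This is exactly what is needed when $\nu\le1-q$. Your plan contains nothing of this kind.

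\textbf{Two smaller corrections for $\gamma>1$.}
\begin{itemize}
\item The positive coordinate is $(1-\gamma)\Phi^\varepsilon=|1-\gamma|\,|\Phi^\varepsilon|$, not $|\Phi^\varepsilon|$. The factor matters because the equation is not $1$-homogeneous.
\item The bound $(1-\gamma)\Phi^\varepsilon\le\varepsilon^{1-\gamma}\mathcal V^{*,1}$ does follow from \eqref{eq:perturbed_candidate_lower_bound}, by multiplying by $1-\gamma<0$; it holds pathwise since $X^\varepsilon\ge\varepsilon X^{*,1}$. \Cref{lem:time_change_comparison} could not supply it anyway. That lemma bounds a solution, not a subsolution, by a supersolution, and $\varepsilon^{1-\gamma}\mathcal V^{*,1}$ need not be a supersolution for $U^{\xi,\pi,l}$.
\end{itemize}
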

	
	\begin{remark}
		If, in addition, \Cref{ass:kappa} holds, then $h$ can be chosen from (i) or (ii) in \Cref{thm verify h is optimal}. Note that \Cref{prop:pass_R_to_infty E=R,prop:pass_R_to_infty} hold for any $\vartheta>0$.
	\end{remark}
	
	\begin{proof}[Proof of \Cref{thm:verification_theta_greater_one}] By \Cref{lem:candidate_proper_theta_greater_one} and Step 1 of the proof of \Cref{thm:verification}, we see that $	\bigl(\pi^*(Y),l^*(Y)\bigr)\in\mathcal A^\dagger(x,y)$. If \(\gamma>1\), homogeneity gives $
		C^{*,x}=xC^{*,1},$ and $U^{C^{*,x}}
		=
		x^{1-1/\psi}U^{*,1}.$ Therefore, the candidate belongs to
		\(\mathcal A^\dagger_{\mathrm{env}}(x,y)\), and hence also to
		\(\mathcal A^\dagger_{\mathrm{ver}}(x,y)\). Set
		\begin{equation*}
			X^*:=X^{\xi,\pi^*,l^*},\qquad	\Psi^\varepsilon:=(1-\gamma)\Phi^\varepsilon=e^{-\delta\vartheta t}\,(X^\varepsilon)^{1-\gamma}\,h(Y)^\chi,
			\qquad
			\mathcal V^{\xi,\pi,l}:=(1-\gamma)J^{\xi,\pi,l}.
		\end{equation*}
		Now, we take $(\pi,l) \in \mathcal A^\dagger_{\mathrm{ver}}(x,y)$.\smallskip
		
		\noindent\textbf{Case 1. \(\gamma>1\):} Suppose first that \(\gamma>1\). By
		\Cref{lem:perturbed_supersolution_theta_greater_one}, for every
		\(\varepsilon>0\) and all bounded stopping times
		\(0\le\tau\le\zeta\),
		\begin{align}
			\Phi_\tau^\varepsilon
			\ge
			\mathbb E_\tau\left[
			\Phi_\zeta^\varepsilon
			+
			\int_\tau^\zeta
			g_{EZ}(s,C_s^\varepsilon,\Phi_s^\varepsilon)\,ds
			\right]
			\ge
			\mathbb E_\tau\left[
			\Phi_\zeta^\varepsilon
			+
			\int_\tau^\zeta
			g_{EZ}(s,C^{\xi,\pi,l}_s,\Phi_s^\varepsilon)\,ds
			\right]
			\label{5656}
		\end{align}
		since \(C^\varepsilon\ge C^{\xi,\pi,l}\) and \(g_{EZ}\) is increasing in its consumption argument. Moreover, \Cref{lem:perturbed_supersolution_theta_greater_one}
		implies that \(\Psi^\varepsilon\) is bounded in \(L^1\) and that
		\(\lim_{t\to\infty}\mathbb E[\Psi_t^\varepsilon]=0\).
		Consequently, multiplying \eqref{5656} by \(1-\gamma<0\) shows that \(\Psi^\varepsilon\) is a nonnegative
		\(L^1\)-bounded subsolution associated with
		\(U^{\xi,\pi,l}\), in the sense of
		\cite[Appendix~B]{herdegen2025proper}. If
		\((\pi,l)\in\mathcal A^\dagger_{\mathrm{ext}}(x,y)\), we may therefore
		apply its defining property, which yields
		\begin{equation}	\label{eq:gamma_greater_extremal_comparison}
			\Psi_t^\varepsilon
			\le
			\mathcal V_t^{\xi,\pi,l},
			\qquad t\ge0,
		\end{equation}

		Now let $(\pi,l)\in\mathcal A^\dagger_{\mathrm{env}}(x,y)$ and choose $K$ such
		that $U^{\xi,\pi,l}\le K U^{*,1}$. \Cref{lem:maximal_proper_solution_candidate_domination} supplies a maximal proper solution for
		$U^{\xi,\pi,l}$; unique properness identifies it with $\mathcal V^{\xi,\pi,l}$.  Since $1-\gamma<0$ and
		$X^\varepsilon\ge\varepsilon X^{*,1}$,
		\begin{equation}
			0\le\Psi_t^\varepsilon
			\le\varepsilon^{1-\gamma} \mathcal V^{*,1}_t
			=\varepsilon^{1-\gamma} (1-\gamma)J^{*,1}.
			\label{eq:Psi_upper_bound_gamma_greater_one}
		\end{equation}
		Choosing
		$\Gamma\ge\max\{1,K^\vartheta,\varepsilon^{1-\gamma}\}$, we using the defining property $ U_s^{\xi,\pi,l} 
		\le
		KU_s^{*,1}$ and $K\Gamma^q
		\le
		\Gamma$ to obtain
		\begin{align}
			U_s^{\xi,\pi,l}
			\bigl(\Gamma\mathcal V_s^{*,1}\bigr)^q
			=
			\Gamma^qU_s^{\xi,\pi,l}
			\bigl(\mathcal V_s^{*,1}\bigr)^q
			\le
			K\Gamma^qU_s^{*,1}
			\bigl(\mathcal V_s^{*,1}\bigr)^q
			\le
			\Gamma U_s^{*,1}
			\bigl(\mathcal V_s^{*,1}\bigr)^q.
			\label{eq:scaled_candidate_aggregator_bound}
		\end{align}
		For all bounded stopping times \(0\le\tau\le\zeta\), we use
		\eqref{eq:scaled_candidate_aggregator_bound} and utility equation \eqref{eq:positive_recursion_theta_greater_one} to obtain
		\[
		\Gamma\mathcal V_\tau^{*,1}
		=
		\mathbb E_\tau\left[
		\Gamma\mathcal V_\zeta^{*,1}
		+
		\int_\tau^\zeta
		\Gamma U_s^{*,1}
		\bigl(\mathcal V_s^{*,1}\bigr)^q\,ds
		\right]
		\ge
		\mathbb E_\tau\left[
		\Gamma\mathcal V_\zeta^{*,1}
		+
		\int_\tau^\zeta
		U_s^{\xi,\pi,l}
		\bigl(\Gamma\mathcal V_s^{*,1}\bigr)^q\,ds
		\right].
		\]
		Hence
		$\Gamma \mathcal V^{*,1}$ is a supersolution for $U^{\xi,\pi,l}$.  The downward
		iteration from $\Gamma \mathcal V^{*,1}$ has a solution as its limit, by the
		argument in Step 2 of the proof of Lemma~\ref{lem:maximal_proper_solution_candidate_domination}. This limit is the maximal solution and hence it is $\mathcal V^{\xi,\pi,l}$ by the uniqueness of the utility process since $(\pi,l) \in \mathcal A^\dagger_{\mathrm{ver}}(x,y)$. By \eqref{eq:Psi_upper_bound_gamma_greater_one}, we have
		\[
		U_t^{\xi,\pi,l}(\Psi_t^\varepsilon)^q
		\le K\varepsilon^{q(1-\gamma)}
		U^{*,1}_t (\mathcal V^{*,1}_t)^q
		\]whose integral has finite expectation by
		\eqref{eq:candidate_finite_horizon_aggregator}.  In addition,
		\eqref{eq:Psi_upper_bound_gamma_greater_one} and
		\eqref{eq:candidate_positive_exponential_tail} imply
		$\mathbb E[\Psi_T^\varepsilon]\to0$. Letting $T\to\infty$ in the subsolution inequality in \eqref{5656}, using conditional monotone
		convergence for the aggregator and $L^1$-convergence of the terminal term therefore gives
		\[
		\Psi_t^\varepsilon
		\le\mathbb E_t\left[\int_t^\infty
		U_s^{\xi,\pi,l}(\Psi_s^\varepsilon)^q\,ds\right].
		\]
		Together with \eqref{eq:Psi_upper_bound_gamma_greater_one} and similar arguments as in Step 2 of the proof of Lemma~\ref{lem:maximal_proper_solution_candidate_domination}, this shows inductively that
		$\Psi^\varepsilon$ lies below every iterate and hence below $\mathcal V^{\xi,\pi,l}$. Thus
		\eqref{eq:gamma_greater_extremal_comparison} holds in both
		classes $\mathcal A^\dagger_{\mathrm{ext}}(x,y)$ and $\mathcal A^\dagger_{\mathrm{env}}(x,y)$.  Dividing by $1-\gamma<0$ yields
		\[
		J_0^{\xi,\pi,l}
		\le\Phi_0^\varepsilon
		=\frac{(x+\varepsilon)^{1-\gamma}}{1-\gamma}h(y)^\chi.
		\]
		Letting $\varepsilon\downarrow0$ proves the required upper bound when
		$\gamma>1$.\smallskip 
		
		\noindent\textbf{Case 2. \(\gamma<1\):}	Suppose now that $\gamma<1$.  Then $1-1/\psi>0$.  Fix
		$\nu\in(0,1-q)$ and define the strictly positive right-continuous reference
		stream
		\begin{equation*}
			C_t^0:=\exp\left(-\frac{\nu \Lambda_t}{1-1/\psi}\right) C^{*,1}_t.
		\end{equation*}
		It satisfies $
		U_t^{C^0}=e^{-\nu \Lambda_t} U^{*,1}_t.$ We consider $C^n:=C^{\xi,\pi,l}\wedge (nC^0) \uparrow C^{\xi,\pi,l}$ as $n \to \infty$ and
		\begin{equation}
			U_t^{C^n}
			\le n^{1-1/\psi}e^{-\nu \Lambda_t} U^{*,1}_t
			\le n^{1-1/\psi} U^{*,1}_t.
			\label{eq:truncated_candidate_envelope}
		\end{equation}
		Lemma~\ref{lem:maximal_proper_solution_candidate_domination} gives a maximal proper solution
		$\mathcal V^n$ for $C^n$. Since $1-\gamma>0$, it holds that $\Psi_t^\varepsilon
		=e^{-\delta\vartheta t}(X_t+\varepsilon X^{*,1}_t)^{1-\gamma}
		h(Y_t)^\chi
		\ge\varepsilon^{1-\gamma} \mathcal V^{*,1}_t.$ Hence, together with $q-1=-1/\vartheta$,
		$(1-\gamma)/\vartheta=1-1/\psi$ and \eqref{eq:truncated_candidate_envelope}, we obtain
		$$
		qU_t^{C^n}(\Psi_t^\varepsilon)^{q-1}
		\le q\left(\frac n\varepsilon\right)^{1-1/\psi}
		\vartheta l^*(Y_t) e^{-\nu \Lambda_t}.$$
		Hence, we apply Lemma~\ref{lem:time_change_comparison} to the utility process
		$\mathcal V^n$ and the positive supersolution $\Psi^\varepsilon$ to get
		\begin{equation}
			\mathcal V^n\le\Psi^\varepsilon.
			\label{eq:truncated_utility_upper_bound}
		\end{equation}
		The maximal solutions $\mathcal V^n$ increase with $n$ by Lemma~\ref{lem:maximal_proper_solution_candidate_domination}.
		By \eqref{eq:truncated_utility_upper_bound}, the limit
		$\mathcal V^\infty:=\lim_n\mathcal V^n$ is finite.  Conditional monotone convergence gives
		\begin{equation*}
			\mathcal V_t^\infty=\mathbb E_t\left[\int_t^\infty
			U_s^{\xi,\pi,l}(\mathcal V_s^\infty)^q\,ds\right].
		\end{equation*}
		Here the c\`adl\`ag version given by the right-hand side is understood.
		Because $C^0$ is strictly positive, the events of nontrivial future
		consumption for $C^{\xi,\pi,l}\wedge C^0$ and for $C^{\xi,\pi,l}$ coincide. Thus, since
		$\mathcal V^\infty\ge \mathcal V^1$ and $\mathcal V^1$ is proper, $\mathcal V^\infty$ is a proper solution for
		$U^{\xi,\pi,l}$. Finally, because
		\((\pi,l)\in\mathcal A^\dagger(x,y)\),
		\(U^{\xi,\pi,l}\) admits a unique proper utility process solving \eqref{eq:proper_utility_theta_greater_one}. Therefore, it yields $\mathcal V^\infty=\mathcal V^{\xi,\pi,l}$.  Letting
		$n\to\infty$ in \eqref{eq:truncated_utility_upper_bound} gives
		$\mathcal V^{\xi,\pi,l}\le\Psi^\varepsilon$, and hence $J_0^{\xi,\pi,l}
		\le\Phi_0^\varepsilon
		=\frac{(x+\varepsilon)^{1-\gamma}}{1-\gamma}h(y)^\chi.$ Sending $\varepsilon\downarrow0$ completes the proof.  
	\end{proof}

	\section{Examples and numerical results}\label{sec. Examples and numerical results}
	In this section, we illustrate three models, namely, the mean reverting risk premium model, Heston-type stochastic-volatility model, and CIR stochastic interest rates model. Explicit solutions are constructed for the first and second models, in some parameter regimes. All these three models are restricted to a single risky asset ($n=1$). The Brownian motions $\{Z_t\}_{t\geq0}$ and $\{W_t\}_{t\geq0}$ have constant instantaneous correlation $\rho\in[-1,1]$. In the numerical simulation, Assumptions \ref{ass.market well-posed}, \ref{ass. Parameters constraint}, \ref{ass:kappa}, \ref{ass:polynomial_growth} (resp. \ref{ass poly growth R_+}), and \ref{assu. Qualitative Verification} are satisfied by the example in \Cref{sec. Mean reverting risk premium} (resp. \Cref{sec. Heston-type stochastic-volatility,sec. CIR stochastic interest rates}). We also restrict to $\vartheta \in (0,1)$. \smallskip 
	
	We compute the optimal strategies for these three examples by computing the bounded positive solution $h$ of the reduced HJB equation \eqref{eq: reduced HJB}. We approximate the solution by restricting the equation to a sufficiently large bounded domain, with the boundary conditions designed in \Cref{sec. Robin-Dirichlet boundary problem,sec. Dirichlet boundary problem}. Error estimates of explicit solutions and the solutions computed by this scheme are also provided.

	\subsection{Mean reverting risk premium}\label{sec. Mean reverting risk premium}
	
	Let $E=\mathbb{R}$. We consider the Kim--Omberg model \cite{KE96}:
	\begin{equation} 
		\frac{dS^0_t}{S^0_t} = r \, dt,\qquad
		\frac{dS^1_t}{S^1_t} = \pig(r+\underbrace{\sigma Y_t}_{\mu(\cdot)} \pig) dt + \sigma\, dZ_t,\qquad
		dY_t = \underbrace{k(\theta-Y_t)}_{b(\cdot)} \, dt + a\, dW_t,
		\label{eq kim omberg}
	\end{equation} 
	where $r$, $\sigma$, $k$, $\theta$ and $a$ are positive constants. The risk premium $Y_t$ of the risky asset is a mean reverting Ornstein-Uhlenbeck process. \smallskip

	\paragraph{Explicit solution.} We first construct explicit solutions to the problem under the market \eqref{eq kim omberg}. This also justifies the accuracy of our numerical scheme. We impose the tractability restriction 
	\begin{align}
		\psi=1+\frac{1-\gamma}{\chi}=2-\gamma+\frac{(1-\gamma)^2}{\gamma}\rho^2\in(0,1) ,\qquad \rho \in (0,1),\qquad \delta >0,\qquad\text{and} \qquad\gamma>1.
		\label{4677}
	\end{align}
	Note this is only used in constructing explicit solutions and is not needed for the  subsequent numerical experiments. The first equality in \eqref{4677} is also used in \cite[Remark 4.3]{kraft2017optimal}. The reduced HJB equation \eqref{eq: reduced HJB} therefore becomes the linear inhomogeneous equation
	\begin{equation}
		\frac{a^2}{2}h''(y)+\left\{k\theta-\left[k+\Big(1-\frac1\gamma\Big)a\rho\right]y\right\}h'(y)-(K_0+K_2y^2)h(y)+\beta^\psi=0,
		\label{eq reduced ode}
	\end{equation}
	where $\kappa(y)=K_0+K_2y^2>0$, $K_0:=\frac{\gamma-1}{\chi}\left(r-\frac{\delta\psi}{\psi-1}\right)>0,$ and $K_2:=\frac{\gamma-1}{2\gamma\chi}>0$. Set 
	\begin{align}
		\begin{alignedat}{3}
			K_1 &:= k+\Bigl(1-\frac{1}{\gamma}\Bigr)a\rho>0,
			&\quad I(y) &:= \frac{k\theta}{a^2}y-\frac{K_1}{2a^2}y^2,
			&\quad m &:= \sqrt{\frac{2K_2}{a^2}+\frac{K_1^2}{a^4}}, \\[0.6em]
			y_* &:= \frac{k\theta K_1}{a^4m^2}>0,
			&\quad \nu &:= \frac{K_1}{a^2}
			-\frac{2K_0}{a^2}
			-\frac{k^2\theta^2}{a^4}
			+m^2y_*^2,
			&\quad \alpha &:= \frac{\nu}{2m}-\frac{1}{2}<0.
		\end{alignedat}
		\label{4703}
	\end{align}
	Writing $D_\alpha$ for the parabolic cylinder function \cite[Section 8.2]{HigherTranscendental} (solutions of the Weber differential equation) of index $\alpha$, define
	\begin{align}
		\phi_1(y):=e^{-I(y)}D_\alpha\big(\sqrt{2m}(y-y_*)\big),
		\qquad
		\phi_2(y):=e^{-I(y)}D_\alpha\big(-\sqrt{2m}(y-y_*)\big),
		\label{4705}
	\end{align}
	the two linearly independent solutions of the homogeneous equation associated with \eqref{eq reduced ode}, with Wronskian $\mathcal{W}(y):=\phi_1(y)\phi_2'(y)-\phi_1'(y)\phi_2(y)$. For a fixed $y_0 \in \mathbb{R}$, the general solution of \eqref{eq reduced ode} is then
	\begin{equation}
		h(y)
		=
		C_1\phi_1(y)+C_2\phi_2(y)
		+
		\frac{2\beta^\psi}{a^2}
		\left[
		\phi_1(y)\int_{y_0}^{y}\frac{\phi_2(z)}{\mathcal{W}(z)}\,dz
		-
		\phi_2(y)\int_{y_0}^{y}\frac{\phi_1(z)}{\mathcal{W}(z)}\,dz
		\right],
		\label{eq:general_variation_solution}
	\end{equation}
	for some $C_1$ and $C_2\in\mathbb{R}$. We can uniquely identify the solution when it is bounded and positive.  \smallskip
	
	\begin{proposition}[\bf Explicit bounded positive solution of \eqref{eq reduced ode}]
		\label{prop explicit +ve bdd sol} Suppose that \eqref{4677} holds. Let \(\phi_1,\phi_2\) be defined in \eqref{4705} and let $
		\mathcal{W}(y):=\phi_1(y)\phi_2'(y)-\phi_1'(y)\phi_2(y)$ be their Wronskian. The solution of \eqref{eq reduced ode} must have the form \eqref{eq:general_variation_solution}. Moreover, the bounded positive solution of \eqref{eq reduced ode} is unique and given by
		\begin{equation}
			h(y)
			=
			\frac{2\beta^\psi}{a^2}
			\left[
			\phi_1(y)\int_{-\infty}^{y}\frac{\phi_2(z)}{\mathcal{W}(z)}\,dz
			+
			\phi_2(y)\int_{y}^{\infty}\frac{\phi_1(z)}{\mathcal{W}(z)}\,dz
			\right].
			\label{eq:full_line_green_solution}
		\end{equation}
		Equivalently, the constants in \eqref{eq:general_variation_solution} are $
		C_1
		=
		\displaystyle\frac{2\beta^\psi}{a^2}
		\int_{-\infty}^{y_0}\frac{\phi_2(z)}{\mathcal{W}(z)}\,dz$ and $
		C_2
		=
		\displaystyle\frac{2\beta^\psi}{a^2}
		\int_{y_0}^{\infty}\frac{\phi_1(z)}{\mathcal{W}(z)}\,dz.$
	\end{proposition}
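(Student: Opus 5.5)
We reduce the homogeneous part of \eqref{eq reduced ode} to Weber's equation, use the known asymptotics of $D_\alpha$ to identify which solution decays at which end, and then build the bounded solution from the variation-of-parameters (Green's function) representation.

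\textbf{Step 1: reduction to Weber's equation.} Set $h=e^{-I}u$ in the homogeneous equation $\frac{a^2}{2}h''+(k\theta-K_1y)h'-(K_0+K_2y^2)h=0$. The choice of $I$ in \eqref{4703} removes the first-order term. What remains is $u''=\bigl(m^2(y-y_*)^2-\nu\bigr)u$ after completing the square; this is where $m$, $y_*$ and $\nu$ come from. The rescaling $z=\sqrt{2m}(y-y_*)$ turns it into $u_{zz}=(\tfrac{z^2}{4}-\alpha-\tfrac12)u$, whose solutions are $D_\alpha(\pm z)$. Hence $\phi_1,\phi_2$ solve the homogeneous equation.

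Their Wronskian follows from the classical identity $\mathcal W[D_\alpha(z),D_\alpha(-z)]=\sqrt{2\pi}/\Gamma(-\alpha)$, which gives
\[
\mathcal W(y)=\frac{\sqrt{2m}\,\sqrt{2\pi}}{\Gamma(-\alpha)}\,e^{-2I(y)}\neq0 .
\]
This consistently has the Abel form $\mathrm{const}\cdot\exp(-\int 2(k\theta-K_1y)/a^2)$, and it is strictly positive because $\alpha<0$. So $\phi_1,\phi_2$ are independent and \eqref{eq:general_variation_solution} is the general solution by variation of constants. Since $\alpha<0$, the integral representation $D_\alpha(z)=\frac{e^{-z^2/4}}{\Gamma(-\alpha)}\int_0^\infty t^{-\alpha-1}e^{-zt-t^2/2}\,dt$ shows $D_\alpha>0$ on $\mathbb R$, so $\phi_1,\phi_2>0$.

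\textbf{Step 2: asymptotics.} As $z\to+\infty$, $D_\alpha(z)\sim z^\alpha e^{-z^2/4}$. As $z\to-\infty$, $D_\alpha(z)\sim\frac{\sqrt{2\pi}}{\Gamma(-\alpha)}|z|^{-\alpha-1}e^{z^2/4}$. Combined with $e^{-I(y)}\approx e^{K_1y^2/(2a^2)}$ and $m>K_1/a^2$, this gives:
\begin{itemize}
\item $\phi_1\approx e^{(K_1/(2a^2)-m/2)y^2}$ decays at $+\infty$ and $\phi_1\approx e^{(K_1/(2a^2)+m/2)y^2}$ grows at $-\infty$;
\item symmetrically, $\phi_2$ decays at $-\infty$ and grows at $+\infty$.
\end{itemize}

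\textbf{Step 3: uniqueness.} If $h$ and $\tilde h$ are two bounded solutions, their difference is $C_1\phi_1+C_2\phi_2$ and is bounded. Growth of $\phi_1$ at $-\infty$ against decay of $\phi_2$ there forces $C_1=0$, and then growth of $\phi_2$ at $+\infty$ forces $C_2=0$. This gives uniqueness among bounded solutions, so a fortiori among bounded positive ones. The same argument identifies the constants: starting from \eqref{eq:general_variation_solution}, boundedness at $\pm\infty$ forces exactly the stated $C_1,C_2$, which is equivalent to \eqref{eq:full_line_green_solution}.

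\textbf{Step 4: existence, and the main obstacle.} We must show that \eqref{eq:full_line_green_solution} is well defined, bounded and positive.

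\emph{Convergence.} By Step 2 the integrands behave as $\phi_2/\mathcal W\approx e^{-(K_1/(2a^2)+m/2)z^2}$ near $-\infty$, and similarly $\phi_1/\mathcal W$ near $+\infty$. Both integrals therefore converge.

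\emph{Positivity.} This is immediate, since every factor is positive.

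\emph{Boundedness.} This is the main obstacle, because each product pairs a growing function with a decaying tail. We handle it by Laplace-type asymptotics. For $y\to+\infty$, the term $\phi_2(y)\int_y^\infty\phi_1/\mathcal W$ has Gaussian exponents that cancel exactly: $(K_1/(2a^2)+m/2)-(K_1/a^2)+(K_1/(2a^2)-m/2)=0$ at leading order. The tail integral then contributes a factor $1/(m\,y)$ against polynomial prefactors $y^{\alpha}\cdot y^{-\alpha-1}$, so this term is $O(y^{-2})$, consistent with $h\approx\beta^\psi/\kappa(y)$. The term $\phi_1(y)\int_{-\infty}^y\phi_2/\mathcal W$ is bounded because the integral stays finite while $\phi_1\to0$. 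The case $y\to-\infty$ is symmetric. Continuity handles compact sets.

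If the asymptotic bookkeeping becomes delicate, a fallback is comparison. Since $h>0$ solves the linear equation with $\kappa\ge K_0>0$, a maximum-principle argument on $[-R,R]$, with boundary values tending to $0$, shows $h\le\beta^\psi/K_0$ globally.
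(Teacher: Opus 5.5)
Your overall architecture is sound, and it differs from the paper's. You prove uniqueness among bounded solutions and then \emph{existence}, by showing that \eqref{eq:full_line_green_solution} is bounded, so that the constants are identified automatically. The paper instead argues in the necessity direction: any bounded solution is forced to have the stated $C_1,C_2$.

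\textbf{The gap in Step 4.} It sits exactly at the point you flag as the main obstacle. You dismiss $\phi_1(y)\int_{-\infty}^y\phi_2/\mathcal W$ as $y\to+\infty$ because ``the integral stays finite while $\phi_1\to0$''. This is false. By your own Step 2 asymptotics, as $z\to+\infty$,
\[
\frac{\phi_2(z)}{\mathcal W(z)}=\frac{\Gamma(-\alpha)}{2\sqrt{\pi m}}\,e^{I(z)}D_\alpha\bigl(-\sqrt{2m}(z-y_*)\bigr)=\exp\Bigl\{\tfrac12\bigl(m-\tfrac{K_1}{a^2}\bigr)z^2+O(z)\Bigr\},
\]
which grows because $m>K_1/a^2$. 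Hence $\int_{-\infty}^y\phi_2/\mathcal W\to\infty$ at a Gaussian rate.

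This term is therefore just as delicate as the one you analysed: it is again a Gaussian-decaying factor times a Gaussian-growing one. The same holds for $\phi_2(y)\int_y^\infty\phi_1/\mathcal W$ as $y\to-\infty$. So half of your boundedness argument at each end is unsupported.

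Your own consistency check detects the problem. The term you did compute behaves like $\beta^\psi/[m(a^2m+K_1)y^2]$; note the Laplace factor is $1/((m+K_1/a^2)y)$, not $1/(my)$. This is not $\beta^\psi/(K_2y^2)$. The missing $\beta^\psi/[m(a^2m-K_1)y^2]$ comes from the term you discarded. Only the sum equals $\beta^\psi/(K_2y^2)$, because $a^4m^2-K_1^2=2a^2K_2$.

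\textbf{The repair and related issues.} The repair is the L'H\^opital computation in \eqref{eq:integral_phi2_W_asymptotic}: $\int_{y_0}^y\phi_2/\mathcal W\sim\frac{\phi_2(y)/\mathcal W(y)}{(m-K_1/a^2)\,y}$. Meanwhile $\phi_1\phi_2/\mathcal W$ is a constant multiple of $D_\alpha(x)D_\alpha(-x)=O(1/x)$, so the term is $O(y^{-2})$.

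The same misconception underlies your Step 3 remark that ``the same argument'' forces $C_1,C_2$. In \eqref{eq:general_variation_solution}, the coefficient of $\phi_1$ is $A(y)=C_1+\frac{2\beta^\psi}{a^2}\int_{y_0}^y\phi_2/\mathcal W$, and this is unbounded. So forcing $C_2$ requires showing $A(y)\phi_1(y)/\phi_2(y)\to0$, which is the paper's \eqref{4855}; the constant-coefficient argument you used for uniqueness does not suffice. In your route this is harmless once Step 4 is fixed, since uniqueness then identifies the constants.

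Finally, the maximum-principle fallback is circular as stated. On $[-R,R]$ it gives only $h\le\max\{\beta^\psi/K_0,h(-R),h(R)\}$, so it presupposes control of $h(\pm R)$, which is the very question. You have two ways out:
\begin{itemize}
\item Compare $h$ with $\beta^\psi/K_0+\varepsilon(\phi_1+\phi_2)$. This needs only the cruder fact $h=o(\phi_1+\phi_2)$ at $\pm\infty$.
\item Take existence of a bounded positive solution from \Cref{prop:pass_R_to_infty E=R}, which applies here since $\kappa=K_0+K_2y^2\ge K_0>0$ and all coefficients grow polynomially. Then prove only the necessity direction, which is the paper's route.
\end{itemize}
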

	
	\begin{proof}  \noindent\textbf{Part 1. General solutions:} We show that the homogeneous equation associated with \eqref{eq reduced ode} reduces to the Weber equation. Set $\phi(y)=e^{-I(y)}v(y)$ for some function $v$. Since $k\theta-K_1y=a^2I'(y)$, the first-order term of the homogeneous equation associated with \eqref{eq reduced ode} is eliminated:
		\[
		\frac{a^2}{2}v''(y)
		+
		\left[
		-\frac{a^2}{2}\bigl(I'(y)\bigr)^2
		-\frac{a^2}{2}I''(y)
		-K_0
		-K_2y^2
		\right]v(y)
		=0.
		\]
		Dividing by $a^2/2$ and then completing the square in $y$, the equation takes the form $
		v''(y)+\left[\nu-m^2(y-y_*)^2\right]v(y)=0.$ The substitution $x=\sqrt{2m}(y-y_*)$ transforms the equation of $v$ into the equation of $\widetilde{v}(x):=v(x/\sqrt{2m}+y_*)$:
		\begin{align}
			\widetilde{v}'' 
			+
			\left(
			\alpha+\frac{1}{2}-\frac{x^2}{4}
			\right)\widetilde{v}=
			\widetilde{v}'' 
			+
			\left(
			\frac{\nu}{2m}-\frac{x^2}{4}
			\right)\widetilde{v}
			=0
			\label{5897}
		\end{align}
		which is precisely the Weber equation. By \cite[Section 8.2]{HigherTranscendental}, the parabolic cylinder function \(D_\alpha(x)\) of index $\alpha$ solves the above equation. Since the coefficient of \eqref{5897} is even in \(x\), \(D_\alpha(-x)\) also solves it. Therefore, returning to the original variable \(y\), the two functions
		\begin{align}
			\phi_1(y)
			=
			e^{-I(y)}D_\alpha\left(\sqrt{2m}(y-y_*)\right),
			\qquad
			\phi_2(y)
			=
			e^{-I(y)}D_\alpha\left(-\sqrt{2m}(y-y_*)\right)
			\label{4820}
		\end{align}
		solve the homogeneous equation associated with \eqref{eq reduced ode}. Since \(\alpha<0\), \(\phi_1,\phi_2\) form a fundamental system. By the variation-of-constants formula for second-order linear ODEs, we can show that \eqref{eq:general_variation_solution} gives the general solution of the inhomogeneous equation \eqref{eq reduced ode}.	\smallskip

		\noindent\textbf{Part 2. Unique bounded positive solution:}  As \(y\to\infty\), \(\phi_2 (y)\) is exponentially growing by \cite[(1.7)-(1.8)]{temme2000numerical}, while \(\phi_1\) goes to $0$ exponentially by \cite[(1.8)]{temme2000numerical}. We have
		\begin{align}
			\phi_1(y) \to 0,
			\qquad
			\phi_2(y) \to \infty,
			\qquad
			\frac{\phi_1(y)}{\phi_2(y)}\to0,
			\qquad \text{as } y\to\infty .
			\label{4834}
		\end{align}
		Write the solution in the form $
		h(y)=A(y)\phi_1(y)+B(y)\phi_2(y),$ where
		\begin{align}
			A(y):=
			C_1+\frac{2\beta^\psi}{a^2}
			\int_{y_0}^{y}\frac{\phi_2(z)}{\mathcal{W}(z)}\,dz,
			\qquad
			B(y):=
			C_2-\frac{2\beta^\psi}{a^2}
			\int_{y_0}^{y}\frac{\phi_1(z)}{\mathcal{W}(z)}\,dz.
			\label{4849}
		\end{align}
		We first determine the coefficients $C_1$ and $C_2$ such that $h$ is bounded and then prove that this bounded solution is unique and positive.\smallskip
		
		\noindent\textbf{Part 2A. Choices of $C_1$ and $C_2$ if $h$ is bounded:} Suppose that $h$ is bounded. We justify that
		\begin{align}
			A(y)\frac{\phi_1(y)}{\phi_2(y)}\to0\qquad\text{as $y \to \infty$}.
			\label{4855}
		\end{align}
		By Abel's identity, $
		\mathcal{W}(y)=\mathcal{W}_0 e^{-2I(y)}$ for some nonzero constant \(\mathcal{W}_0\). Hence, by \eqref{4820}, we have
		\begin{align}
			\frac{\phi_2(y)}{\mathcal{W}(y)}
			=
			\frac{1}{\mathcal{W}_0}
			e^{I(y)}D_\alpha(-x),\qquad
			\frac{\phi_1(y)}{\mathcal{W}(y)}
			=
			\frac{1}{\mathcal{W}_0}
			e^{I(y)}D_\alpha(x),
			\qquad
			\text{for } x=\sqrt{2m}(y-y_*).
			\label{eq:phi_over_W_asymptotic_start}
		\end{align}
		Since \(\alpha<0\), the asymptotic expansions in
		\cite[(1.7)--(1.8)]{temme2000numerical} give
		\begin{align}
			D_\alpha(x)
			=
			e^{-x^2/4}x^\alpha
			\left(1+O(x^{-2})\right),\qquad
			D_\alpha(-x)
			=
			\frac{\sqrt{2\pi}}{\Gamma(-\alpha)}
			e^{x^2/4}x^{-\alpha-1}
			\left(1+O(x^{-2})\right)
			\label{eq:Dalpha_positive_asymptotic}
		\end{align}
		as \(x\to\infty\), where $\Gamma$ is the Gamma function. Consequently,
		\begin{align}
			\frac{\phi_1(y)}{\phi_2(y)}
			&=
			\frac{\Gamma(-\alpha)}{\sqrt{2\pi}}
			e^{-x^2/2}x^{2\alpha+1}
			\left(1+O(x^{-2})\right)
			\longrightarrow 0
			\qquad\text{as }y\to\infty.
			\label{eq:phi1_phi2_ratio_asymptotic}
		\end{align} 
		Moreover, \eqref{eq:phi_over_W_asymptotic_start} and
		\eqref{eq:Dalpha_positive_asymptotic} imply
		\begin{align}
			\frac{\phi_2(y)}{\mathcal{W}(y)}
			&=
			\frac{\sqrt{2\pi}}
			{\mathcal{W}_0\Gamma(-\alpha)}
			e^{I(y)+x^2/4}x^{-\alpha-1}
			\left(1+O(x^{-2})\right).
			\label{eq:phi2_W_precise_asymptotic}
		\end{align}
		Since $I(y)+\frac{x^2}{4}
		=
		\frac12
		\left(m-\frac{K_1}{a^2}\right)y^2
		+
		\left(\frac{k\theta}{a^2}-my_*\right)y
		+
		\frac{m}{2}y_*^2,$ and \(m>K_1/a^2\), the right-hand side of
		\eqref{eq:phi2_W_precise_asymptotic} grows exponentially in
		absolute value. In particular, $
		\left|
		\int_{y_0}^{y}
		\frac{\phi_2(z)}{\mathcal{W}(z)}\,dz
		\right|
		\longrightarrow\infty$ as $y\to\infty.$ Applying \eqref{eq:phi2_W_precise_asymptotic} yields
		\begin{align}
			\lim_{y\to\infty}
			\frac{
				\int_{y_0}^{y}
				\frac{\phi_2(z)}{\mathcal{W}(z)}\,dz
			}{
				\displaystyle
				e^{I(y)+x^2/4}x^{-\alpha-2}
			}
			=
			\lim_{y\to\infty}
			\frac{
				\frac{\phi_2(y)}{\mathcal{W}(y)}
			}{
				e^{I(y)+x^2/4}x^{-\alpha-2}
				\left[
				I'(y)
				+\frac{x \sqrt{2m}}{2}
				-(\alpha+2)\frac{\sqrt{2m}}{x}
				\right]
			}
			=
			\frac{\sqrt{2\pi}}
			{\mathcal{W}_0\Gamma(-\alpha)}
			\frac{\sqrt{2m}}
			{m-K_1/a^2}.
			\label{eq:integral_phi2_W_asymptotic}
		\end{align}
		It follows from
		\eqref{eq:integral_phi2_W_asymptotic} and
		\eqref{eq:phi1_phi2_ratio_asymptotic} that
		\begin{align*}
			\left(
			\int_{y_0}^{y}
			\frac{\phi_2(z)}{\mathcal{W}(z)}\,dz
			\right)
			\frac{\phi_1(y)}{\phi_2(y)}=
			O\left(
			e^{I(y)-x^2/4}x^{\alpha-1}
			\right).
		\end{align*}
		On the other hand, $
		I(y)-\frac{x^2}{4}
		=
		-\frac12
		\left(m+\frac{K_1}{a^2}\right)y^2
		+
		\left(\frac{k\theta}{a^2}+my_*\right)y
		-
		\frac{m}{2}y_*^2$ has a negative quadratic leading term. Together with \eqref{4849} and
		\eqref{eq:phi1_phi2_ratio_asymptotic}, this proves
		\eqref{4855}.\smallskip
		
		It remains to determine the coefficient \(C_2\). From
		\eqref{eq:phi_over_W_asymptotic_start} and
		\eqref{eq:Dalpha_positive_asymptotic},
		\begin{align*}
			\frac{\phi_1(y)}{\mathcal{W}(y)}
			=
			O\left(
			e^{I(y)-x^2/4}x^\alpha
			\right)
			=
			O\left(
			\exp\left\{
			-\frac12
			\left(m+\frac{K_1}{a^2}\right)y^2
			+O(y)
			\right\}
			y^\alpha
			\right)
			\qquad\text{as }y\to\infty.
		\end{align*}
		Thus $
		\int_{y_0}^{\infty}
		\frac{\phi_1(z)}{\mathcal{W}(z)}\,dz$
		is finite, and \eqref{4849} gives
		\begin{align}
			\lim_{y\to\infty}B(y)
			=
			C_2
			-
			\frac{2\beta^\psi}{a^2}
			\int_{y_0}^{\infty}
			\frac{\phi_1(z)}{\mathcal{W}(z)}\,dz.
			\label{eq:B_limit_at_infinity}
		\end{align}
		If \(h\) is bounded, then \(\lim_{y\to\infty}\phi_2(y)=\infty\) implies $A(y)\frac{\phi_1(y)}{\phi_2(y)}+B(y)
		=\frac{h(y)}{\phi_2(y)}
		\longrightarrow 0$ as $y\to\infty.$ Therefore, we may use \eqref{4855} and
		\eqref{eq:B_limit_at_infinity} to obtain $0
		=
		C_2
		-
		\frac{2\beta^\psi}{a^2}
		\int_{y_0}^{\infty}
		\frac{\phi_1(z)}{\mathcal{W}(z)}\,dz$, which confirms the choice of $C_2$ as stated. The computation of $C_1$ can be obtained similarly by taking $y\to -\infty$ in $
		\frac{h(y)}{\phi_1(y)}
		=
		A(y)+B(y)\frac{\phi_2(y)}{\phi_1(y)} .
		$\smallskip
		
		\noindent\textbf{Part 2B. Positivity of $h$:} Since \(\alpha<0\), the integral representation of $D_\alpha$ in \cite[Section 8.3]{HigherTranscendental} implies that \(D_\alpha(x)>0\) for every
		\(x\in\mathbb R\). Moreover, $\mathcal W(y)
		=
		\frac{2\sqrt{\pi m}}{\Gamma(-\alpha)}e^{-2I(y)}>0.$ Since the improper integrals in
		\eqref{eq:full_line_green_solution} are finite, that representation
		immediately gives $h(y)>0$ for $y\in\mathbb R$.\smallskip
		
		\noindent\textbf{Part 2C. Uniqueness of $h$:} Finally, uniqueness follows immediately. If \(h_1\) and \(h_2\) are two bounded positive solutions, then \(h_1-h_2\) solves the homogeneous equation. Hence $
		(h_1-h_2)(y)=A_1\phi_1(y)+A_2\phi_2(y)$ for some constants \(A_1,A_2\). The boundedness and \eqref{4834} force
		\(A_2=0\) when we take $y\to\infty$ and \(A_1=0\) when we take $y \to - \infty$. Thus
		\(h_1-h_2\equiv0\).
	\end{proof}
	
	\begin{corollary}\label{cor explicit sol solves the control problem} If \eqref{4677} holds, then the value function and optimal controls for $\displaystyle\sup_{(\pi,l)\in\mathcal{A}(x,y)}
		\widetilde J_0^{\,\xi,\pi,l}$ are given by \Cref{thm:verification} with the corresponding solution $h$ given in \eqref{eq:full_line_green_solution}.
	\end{corollary}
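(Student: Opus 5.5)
The plan is to reduce the corollary to \Cref{thm verify h is optimal} together with \Cref{prop:classical_generalised_value_equality}. Both of these give the value of $\sup_{(\pi,l)\in\mathcal A(x,y)}\widetilde J_0^{\,\xi,\pi,l}$ as $\frac{x^{1-\gamma}}{1-\gamma}h(y)^\chi$, with optimal feedback controls \eqref{eq:feedback_controls}, for \emph{any} bounded positive solution $h$ of \eqref{eq: reduced HJB}. Three things therefore need checking under \eqref{4677}:
\begin{enumerate}[(a).]
\item the standing assumptions and $\vartheta\in(0,1)$;
\item \Cref{ass:kappa} and \Cref{assu. Qualitative Verification} for the Kim--Omberg market \eqref{eq kim omberg};
\item that $h$ in \eqref{eq:full_line_green_solution} is a bounded positive classical solution of \eqref{eq: reduced HJB}.
\end{enumerate}

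For (a), note that \eqref{4677} gives $\psi=1+(1-\gamma)/\chi$, so $\vartheta=\psi(1-\gamma)/(\psi-1)=\psi\chi$. Since $\psi\in(0,1)$ and $\chi=\gamma/(\gamma+(1-\gamma)\rho^2)>0$, we get $\vartheta>0$. To show $\vartheta<1$, I would rewrite it as $\vartheta=\chi+(1-\gamma)$, which is less than $1$ iff $\chi<\gamma$, i.e.\ iff $\gamma+(1-\gamma)\rho^2>1$. This holds because $(\gamma-1)(1-\rho^2)>0$ for $\gamma>1$ and $\rho<1$. \Cref{ass.market well-posed} is standard here: the coefficients are smooth, $\Sigma=\sigma^2>0$, $a^2>0$, and the linear Gaussian SDE is well posed. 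The choice $\psi=1+(1-\gamma)/\chi$ gives $p=0$ in \eqref{eq:p_eta_def}, which is exactly why \eqref{eq: reduced HJB} becomes the linear equation \eqref{eq reduced ode}.

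For (b), $\kappa(y)=K_0+K_2y^2\ge K_0>0$. I would check $K_0>0$ from $r>0$, $\delta>0$ and $\psi<1$: then $-\delta\psi/(\psi-1)>0$, and $\gamma>1$. For recurrence, $\widetilde b(y)=k\theta-K_1y$ with $K_1>0$ and constant $a$, so $\widetilde s'(y)=\exp(-2I(y))$ with $I$ a concave quadratic. Hence $\widetilde s'$ grows like $e^{K_1y^2/a^2}$, and $\widetilde s(\pm\infty)=\pm\infty$, verifying \Cref{assu. Qualitative Verification}. (\Cref{ass:polynomial_growth} also holds trivially.)

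For (c), \Cref{prop explicit +ve bdd sol} already gives positivity and that $h$ solves \eqref{eq reduced ode}, hence \eqref{eq: reduced HJB} with $p=0$. Boundedness is built into the characterisation there, but to be explicit I would argue as follows. Combining \eqref{eq:phi1_phi2_ratio_asymptotic} with \eqref{eq:integral_phi2_W_asymptotic}, the first term is $O(x^{-1})\to0$ as $y\to\infty$. The second term, $\phi_2\int_y^\infty\phi_1/\mathcal W$, is handled by l'Hôpital with the Gaussian decay of $\phi_1/\mathcal W$ and is also bounded. Symmetric estimates hold as $y\to-\infty$. Continuity gives boundedness on compacts.

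Alternatively, one can avoid these asymptotics. \Cref{prop:pass_R_to_infty E=R} produces a bounded positive solution under the assumptions just checked, and the uniqueness in \Cref{prop explicit +ve bdd sol} identifies it with \eqref{eq:full_line_green_solution}. This is the cleanest route and the one I would present.

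The main (mild) obstacle is the parameter bookkeeping in (a)–(b), specifically confirming $\vartheta<1$ and $K_0>0$ from \eqref{4677}. Once those hold, applying \Cref{thm verify h is optimal} and \Cref{prop:classical_generalised_value_equality} concludes.
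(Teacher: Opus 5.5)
Your proposal is correct and follows the paper's proof. You derive $\vartheta=\chi\psi=\chi+1-\gamma\in(0,1)$ from $\chi<\gamma$, obtain $K_0>0$ and recurrence of the reference diffusion from $K_1>0$, and then apply \Cref{thm verify h is optimal} and \Cref{prop:classical_generalised_value_equality}, exactly as the paper does. Your extra justification that \eqref{eq:full_line_green_solution} is bounded, by combining \Cref{prop:pass_R_to_infty E=R} with the uniqueness in \Cref{prop explicit +ve bdd sol}, is sound and slightly more careful than the paper: the proof of that proposition derives $C_1,C_2$ assuming $h$ is bounded, but never checks that the resulting formula is in fact bounded.
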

	
	\begin{proof}  	The relation in \eqref{4677} implies $
		\vartheta
		=
		\frac{1-\gamma}{1-\psi^{-1}}
		=
		\chi\psi
		=
		\chi+1-\gamma.$ Since \(0<\rho<1\), the definition of \(\chi\) gives
		\(\chi<\gamma\). Moreover, since \(\psi \in (0,1)\), Assumption \ref{ass. Parameters constraint} and $0<\vartheta<1$ hold. The dynamics of return process and state variable in \eqref{eq kim omberg} have unique strong solutions and therefore the martingale problem in Assumption \ref{ass.market well-posed} is well posed because the weak law is unique. From \eqref{4703}, Assumptions \ref{ass:kappa} and \ref{ass:polynomial_growth} are satisfied. The scale function of the reference dynamics is given by
		$$\widetilde{s}(y)=\int_c^y \exp\left(-2\int_c^u \frac{\widetilde{b}(s)}{a^2}ds\right)du,$$
		where $
		\widetilde{b}(y):= k\theta
		-\left[k+\left(1-\frac{1}{\gamma}\right)a\rho\right]y$. It
		clearly satisfies \Cref{assu. Qualitative Verification} as $\gamma>1$. Therefore, \Cref{thm verify h is optimal} applies as $h$ is a bounded positive solution from \Cref{prop explicit +ve bdd sol}. \Cref{prop:classical_generalised_value_equality} then identifies the
		classical and generalised values over
		\(\mathcal A_{EZ}(x,y)\) and \(\mathcal A(x,y)\), respectively,
		and shows that the same feedback controls are optimal for the
		generalised problem.  
	\end{proof} 
	
	With \Cref{cor explicit sol solves the control problem}, we estimate the maximum relative error of solutions obtained by computing \eqref{eq:full_line_green_solution} directly versus using the ODE approximation in \Cref{sec. Dirichlet boundary problem} on the bounded domain $[-R,R]$.  
	\begin{table}[htbp]
		\centering
		\label{tab:relative-error-R}
		\small
		\setlength{\tabcolsep}{4pt}
		\renewcommand{\arraystretch}{1.1}
		\begin{tabular}{c|ccccccc}
			\hline
			$R$
			& $2.05$ & $2.1$ & $2.2$ & $2.5$ & $3$ & $5$ & $8$
			\\
			\hline
			Rel. error$/10^{-7}$
			& $25.6696 $ & $9.4489$ & $9.4540$ & $9.4497$ & $9.4562$ & $9.4462$ & $9.4471$
			\\
			\hline
		\end{tabular}
		\caption{The error is measured by $\max_{[-2,2]}\frac{|h_R-h|}{h}$, where $h$ is the explicit solution in \eqref{eq:full_line_green_solution} and $h_R$ is the solution of the ODE in \eqref{eq:BVP_h} on $[-R,R]$. We use $r= 0.0168$, $\sigma=0.151$, $k= 0.271$, $\theta = 0.0788$, $a= 0.0655$, $\gamma=2$, $\rho=0.5$, and $\beta=1$.}
	\end{table}

	\paragraph{Numerical simulation.} In the numerical simulation, we do not restrict ourselves to \eqref{4677}. In this situation, we consider the following parameters from \cite[Table 1]{W02}. 
	
	\begin{table}[H]
		\centering
		\label{tab:wachter_parameters}
		\begin{tabular}{lccccccc}
			\hline
			\textbf{Parameter} & $a$ & $\delta$ & $k$ & $\rho$ & $r$ & $\sigma$ & $\theta$ \\
			\hline
			\textbf{Value} & $0.0655$ & $0.0624$ & $0.271$ & $-0.935$ & $0.0168$ & $0.151$ & $0.0788$ \\
			\hline
		\end{tabular}
		\caption{Annualised parameters based on monthly data from \cite[Table 1]{W02}.}
	\end{table} 
	
	\noindent The above parameters are converted from the monthly discrete-time VAR calibration of
	\cite{barberis2000investing}, based on data from June 1952 to December 1995. 
	Moreover, we set $\beta=1$ and domain size $R=8$. We plot the graphs with $(\gamma,\vartheta)=(2,0.25), (2,0.5), (2,1), (4,0.25), (4,0.5), (4,1)$. Under these values of parameters, \Cref{ass. Parameters constraint} holds; and all other required assumptions are satisfied, which can be proved similarly to \Cref{cor explicit sol solves the control problem}. 
	
	\begin{figure}[H]
		\centering
		\begin{subfigure}[t]{0.48\linewidth}
			\centering
			\includegraphics[width=\linewidth]{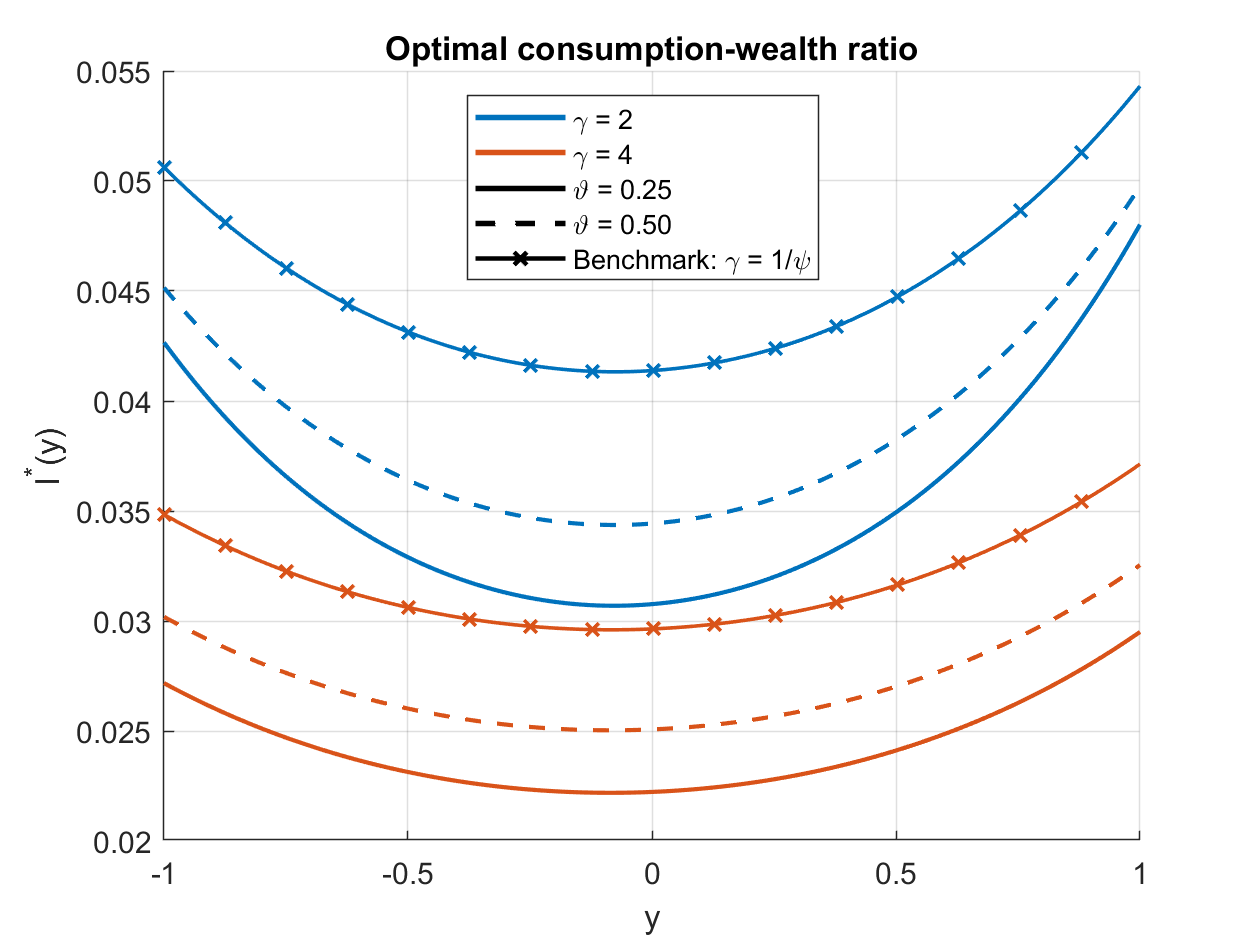} 
			\label{fig:koconsumption}
		\end{subfigure}
		\hspace{-0.01\linewidth}
		\begin{subfigure}[t]{0.48\linewidth}
			\centering
			\includegraphics[width=\linewidth]{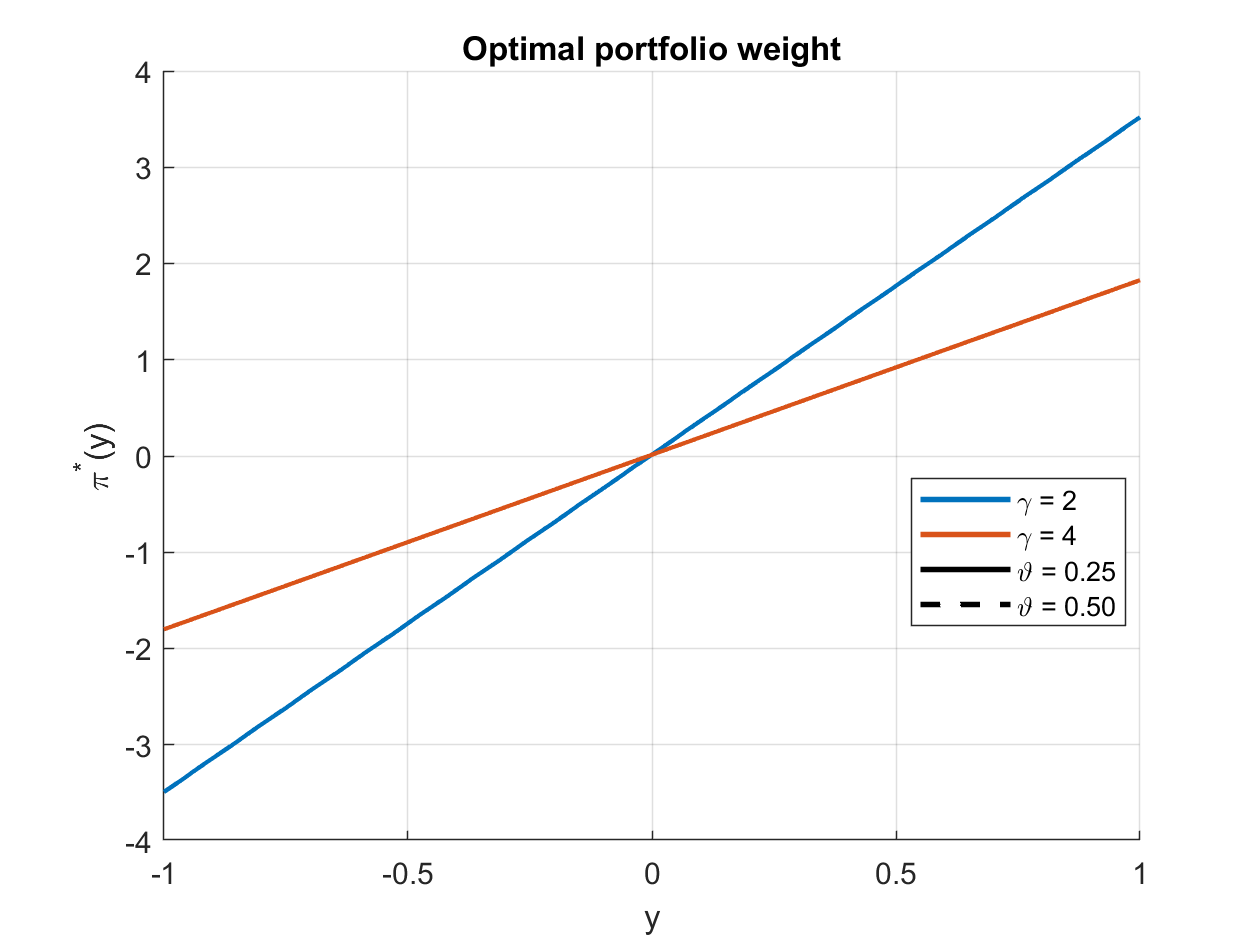} 
			\label{fig:koportfolio}
		\end{subfigure}
		\caption{Optimal consumption-wealth ratio $l^*$ (left) and portfolio weight $\pi^*$ (right) against state variable $y$. Colours
			distinguish the risk-aversion \(\gamma\), while line styles
			distinguish \(\vartheta\).}
		\label{fig:ko_consumption_portfolio}
	\end{figure}
	
	Under the specification in \eqref{eq kim omberg}, the corresponding myopic
	consumption--wealth ratio is
	\[
	l^{\mathrm{myo}}(y)
	=
	\delta\psi
	+
	(1-\psi)
	\left(
	r+\frac{y^2}{2\gamma}
	\right).
	\]
	For the parameters in \Cref{fig:ko_consumption_portfolio}, both
	a large positive and a large negative Sharpe ratio represent good investment opportunities: the investor exploits the former by taking
	a long position in the risky asset and the latter by taking a short position. This explains why the optimal consumption--wealth ratio is lowest when the Sharpe ratio is close to zero and increases towards both tails.\smallskip
	
	For a fixed value of \(\gamma\), the relation $\psi
	=
	\frac{\vartheta}{\vartheta+\gamma-1}$ shows that varying \(\vartheta\) amounts to varying the EIS. Thus, holding the
	colour fixed, the solid, dashed, and marked curves provide a genuine comparison
	with respect to the EIS. Under the present calibration, increasing \(\vartheta\), and hence
	the EIS, raises the optimal consumption--wealth ratio throughout the plotted
	region.\smallskip
	
	In
	the present model,
	\[
	\pi^*(y)
	=
	\underbrace{
		\frac{y}{\gamma\sigma}
	}_{\text{myopic demand}}
	+
	\underbrace{
		\frac{\chi a\rho}{\gamma\sigma}
		\frac{h'(y)}{h(y)}
	}_{\text{intertemporal hedging demand}}.
	\]
	The portfolio in \Cref{fig:ko_consumption_portfolio} is approximately
	linear in \(y\). Since the myopic demand is linear in \(y\), this suggests
	that it accounts for most of the variation in the optimal portfolio. The
	portfolio curves corresponding to different values of \(\vartheta\) are also
	visually almost indistinguishable. Although \(\vartheta\) affects the portfolio indirectly through the hedging term \(h'/h\), the figure indicates that this channel is small relative to the myopic demand under the present calibration. This is consistent with the numerical evidence in
	\cite{xing2017consumption,bayraktar2026infinite}, where changes in
	intertemporal substitution have a substantially stronger effect on consumption
	than on risky investment. Increasing \(\gamma\) from \(2\) to \(4\) substantially flattens the optimal
	portfolio rule. This comparison should not, however, be interpreted as a pure
	risk-aversion comparative static, because holding \(\vartheta\) fixed while
	changing \(\gamma\) also changes \(\psi\).

	\subsection{Heston-type stochastic-volatility}\label{sec. Heston-type stochastic-volatility}
	
	Let \(E=\mathbb{R}_{+}\). We consider the Heston-type stochastic-volatility model of
	\cite{10.1093/rfs/6.2.327,PAN20023}. The market is
	\[
	\frac{dS_t^0}{S_t^0}=r\,dt,\qquad
	\frac{dS_t^1}{S_t^1}
	=
	\pig(r+\underbrace{\mu_0Y_t}_{\mu(\cdot)}\pig)dt+\underbrace{\sqrt{Y_t}}_{\sigma(\cdot)}\,dZ_t,\qquad
	dY_t
	=
	\underbrace{k(\theta-Y_t)}_{b(\cdot)}\,dt+\underbrace{\zeta\sqrt{Y_t}}_{a(\cdot)}\,dW_t,
	\]
	where $r$, $\mu_0$, $k$, $\theta$, and $\zeta$ are positive constants, and $ 2k\theta\geq \zeta^2$. In this situation, the functions $	\widetilde{b}$ and $\kappa$ are defined as $\widetilde{b}(y):= k(\theta-y)
	-\left(1-\frac{1}{\gamma}\right)\mu_0\zeta\rho y$ and $\kappa(y):=\frac{1-\gamma}{\chi}
	\left(
	\frac{\delta\psi}{\psi-1}
	-r
	-\frac{\mu_0^2}{2\gamma}y
	\right)$.
	
	\begin{remark}[\bf Explicit solution and relative error]
		Assume that $\delta>0$, $\rho \in (0,1)$, $\gamma>1$, $\psi=1+\frac{1-\gamma}{\chi}$, $\frac{2k\theta}{\zeta^2}\geq 1$ and that \Cref{ass. Parameters constraint} is satisfied. Define $\phi_3(y)
		:=
		e^{q'y}M(a_0,b_0,sy)$ and $\phi_4(y)
		:=
		e^{q'y}U(a_0,b_0,sy)$, where \(M={}_1F_1\) is Kummer's confluent hypergeometric function and
		\(U\) is Tricomi's confluent hypergeometric function. Here the constants are
		\[
		\begin{alignedat}{3}
			\Delta
			&:=
			\sqrt{
				\left(k+\left(1-\frac{1}{\gamma}\right)\mu_0\zeta\rho\right)^2
				-
				\zeta^2\frac{1-\gamma}{\chi}\frac{\mu_0^2}{\gamma}
			},
			&\qquad
			q'
			&:=
			\frac{
				k+\left(1-\frac{1}{\gamma}\right)\mu_0\zeta\rho-\Delta
			}{\zeta^2},
			&\qquad
			s
			&:=
			\frac{2\Delta}{\zeta^2},
			\\[0.6em]
			b_0
			&:=
			\frac{2k\theta}{\zeta^2},
			&\qquad
			a_0
			&:=
			\frac{
				\frac{1-\gamma}{\chi}
				\left(
				\frac{\delta\psi}{\psi-1}-r
				\right)
				-k\theta q'
			}{\Delta}.
		\end{alignedat}
		\]
		Let $\mathcal{W}(y)
		:=
		\phi_3'(y)\phi_4(y)-\phi_3(y)\phi_4'(y).$ Then the unique bounded positive solution of \eqref{eq: reduced HJB} is
		\begin{equation}
			h(y)
			=
			\dfrac{2\beta^\psi}{\zeta^2}
			\left[
			\phi_4(y)
			\int_0^y
			\frac{\phi_3(z)}{ z\, \mathcal{W}(z)}\,dz
			+
			\phi_3(y)
			\int_y^\infty
			\frac{\phi_4(z)}{z\, \mathcal{W}(z)}\,dz
			\right],
			\qquad \text{for }y>0.
			\label{eq:heston_green_solution}
		\end{equation}
		The verification for $h$ can be proved similarly to \Cref{cor explicit sol solves the control problem} by using the boundedness and positivity of $h$. We compute this explicit solution $h$ directly and compute the solution $h_{1/R,R}$ of ODE in \eqref{eq:eps_BVP} on the domain $[1/R,R]$. If we choose $R=10^{5}$, the maximum relative error $\max_{[10^{-4},10^{4}]}\frac{|h_{1/R,R}-h|}{h}$ is less than $3 \times 10^{-5}$, for some suitable parameters. This justifies the accuracy of our numerical scheme.
	\end{remark}

	Following \cite[Tables~1,6]{PAN20023}, we use the following parameter values: 
	\begin{table}[H]
		\centering
		\label{tab:heston_parameters}
		\begin{tabular}{lccccccc}
			\hline
			\textbf{Parameter} & $\delta$ & $k$ & $\mu_0$ & $r$ & $\rho$ & $\theta$ & $\zeta$ \\
			\hline
			\textbf{Value} & $0.0624$ & $7.1$ & $8.6$ & $0.058$ & $-0.53$ & $0.0137$ & $0.32$ \\
			\hline
		\end{tabular}
		\caption{Parameters based on the SV estimates in \cite[Tables~1, 6]{PAN20023}.}
	\end{table}

	\noindent
	Specifically, \cite{PAN20023} estimates these values using an implied-state generalised method of moments (IS-GMM) procedure applied to the joint weekly time series of the S\&P 500 index and near-the-money short-dated option prices from January 1989 to December 1996. 
	The impatience rate \(\delta=0.0624\) is chosen separately. We also set $\beta=1$ and the domain $[1/R,R]=[10^{-5},10^{5}]$. 
	
	\begin{figure}[H]
		\centering
		\begin{subfigure}[t]{0.48\linewidth}
			\centering
			\includegraphics[width=\linewidth]{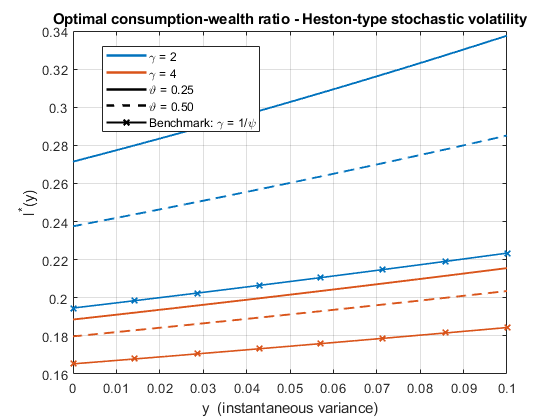} 
			\label{fig:Heston_consumption}
		\end{subfigure}
		\hspace{-0.01\linewidth}
		\begin{subfigure}[t]{0.48\linewidth}
			\centering
			\includegraphics[width=\linewidth]{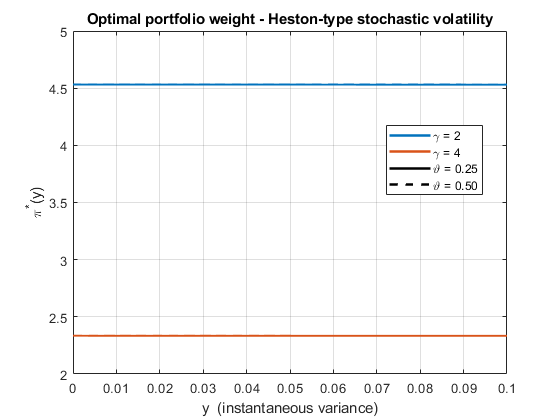} 
			\label{fig:Heston_portfolio}
		\end{subfigure}
		\caption{Optimal consumption-wealth ratio $l^*$ (left) and portfolio weight $\pi^*$ (right) against state variable $y$}
		\label{fig:H_consumption_portfolio}
	\end{figure}

	Under the Heston-type specification, the instantaneous Sharpe
	ratio is \(\mu_0\sqrt{y}\). A higher value of \(y\) raises the current consumption in the left panel. For a fixed value of \(\gamma\), increasing \(\vartheta\) also increases the EIS. A higher EIS lowers current consumption, particularly
	when \(y\) is large. Economically, a greater willingness to substitute
	consumption across time makes the investor more willing to postpone consumption
	when the Sharpe ratio is high. The near-flat portfolio rules in the right panel follow from
	\[
	\pi^*(y)
	=\frac{\mu_0}{\gamma}
	+
	\frac{\chi\zeta\rho}{\gamma}
	\frac{h'(y)}{h(y)}.
	\]
	The myopic demand is independent of the state variable. The remaining
	state dependence comes entirely from the hedging term, which is small and
	nearly constant over the plotted region. This also explains why changing
	\(\vartheta\) has little visible effect on portfolio choice, whereas increasing
	\(\gamma\) from \(2\) to \(4\) substantially reduces the risky-asset weight.\smallskip
	
	Moreover, the consumption curves imply \(h'(y)/h(y)<0\) in the plotted region due to \eqref{eq:feedback_controls}. Since \(\rho<0\), the intertemporal hedging demand is positive, so the optimal portfolio lies slightly above the myopic benchmark \(\mu_0/\gamma\). A negative stock-return shock tends to be accompanied by a rise in variance and therefore an improvement in future investment opportunities; this continuation-value effect partially insures the investor against the contemporaneous loss and supports a larger risky position.
	
	\subsection{CIR stochastic interest rates}\label{sec. CIR stochastic interest rates} Let \(E=\mathbb{R}_{+}\). We consider a market with a CIR stochastic short rate:
	\[
	\frac{dS_t^0}{S_t^0}
	=
	\underbrace{Y_t}_{r(\cdot)}\,dt,
	\qquad
	\frac{dS_t^1}{S_t^1}
	=
	\left(Y_t+\mu_0\right)dt+\sigma_0\,dZ_t,
	\qquad
	dY_t
	=
	\underbrace{k(\theta-Y_t)}_{b(\cdot)}\,dt+\underbrace{\zeta\sqrt{Y_t}}_{a(\cdot)}\,dW_t.
	\] 
	Here $k, \theta, \zeta, \sigma_0$, and $\mu_0$ are all positive, and $ 2k\theta\geq \zeta^2$. In this situation, the functions \(\widetilde b\) and \(\kappa\) are given by $\widetilde b(y) := k(\theta-y) - \left(1-\frac{1}{\gamma}\right) \frac{\mu_0\zeta\rho}{\sigma_0}\sqrt{y}$ and $\kappa(y) := \frac{1-\gamma}{\chi} \left( \frac{\delta\psi}{\psi-1}  -\frac{\mu_0^2}{2\gamma\sigma_0^2} -y\right)$. For the numerical experiment, we use the following parameter values 
	\begin{table}[H]
		\centering
		\label{tab:kim_cir_parameters}
		\begin{tabular}{lccccccc}
			\hline
			\textbf{Parameter} & $\delta$ & $k$ & $\mu_0$ & $\rho$ & $\sigma_0$ & $\theta$ & $\zeta$ \\
			\hline
			\textbf{Value} & $0.0624$ & $1.2188$ & $0.0417$ & $-0.789$ & $0.2438$ & $0.0183$ & $0.0289$ \\
			\hline
		\end{tabular}
	\end{table}

	\noindent We also set $\beta=1$ and the domain $[1/R,R]=[10^{-5},10^{5}]$. 
	
	\begin{figure}[H]
		\centering
		\begin{subfigure}[t]{0.49\linewidth}
			\centering
			\includegraphics[width=\linewidth]{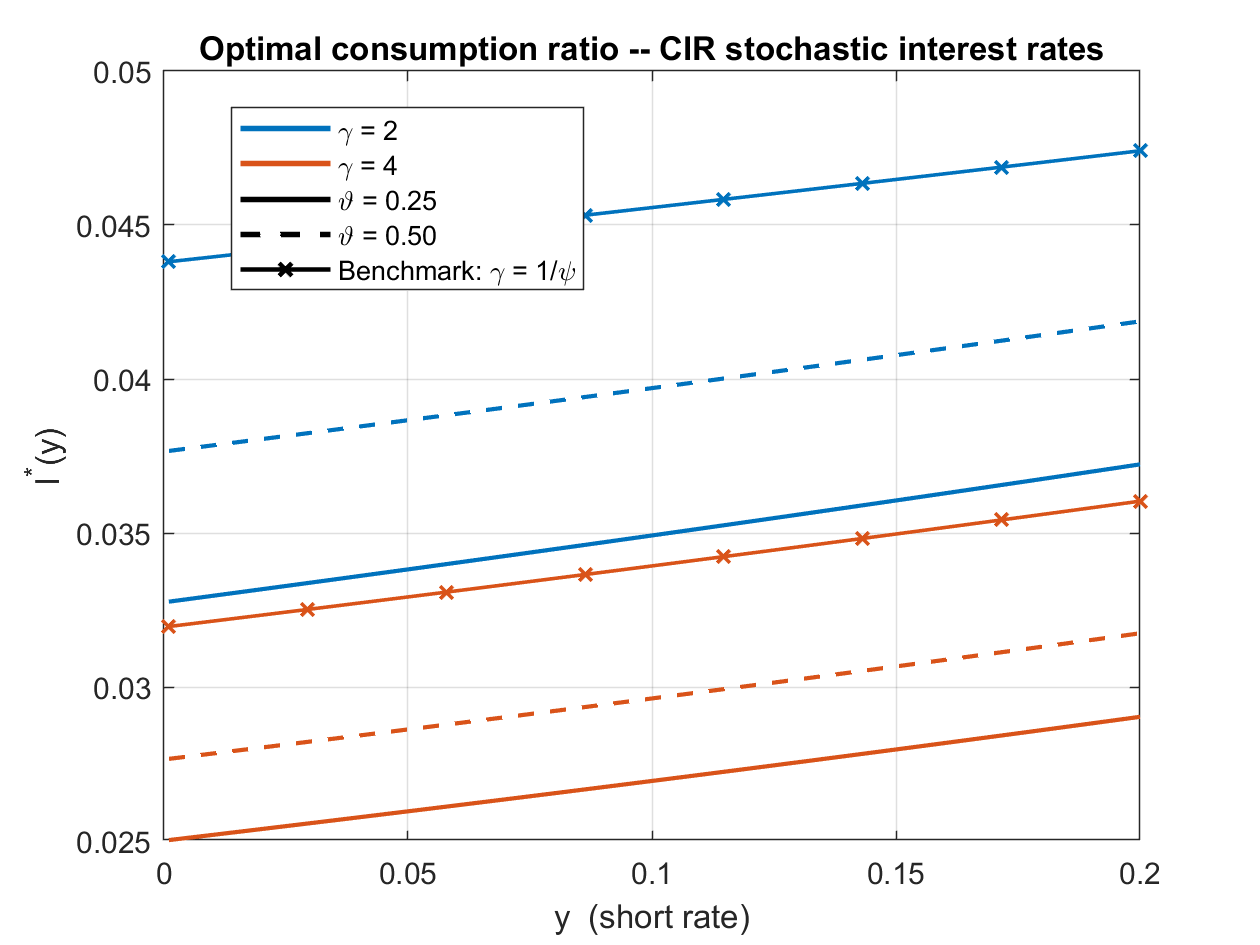} 
			\label{fig:CIR_consumption}
		\end{subfigure}
		\hspace{-0.01\linewidth}
		\begin{subfigure}[t]{0.49\linewidth}
			\centering
			\includegraphics[width=\linewidth]{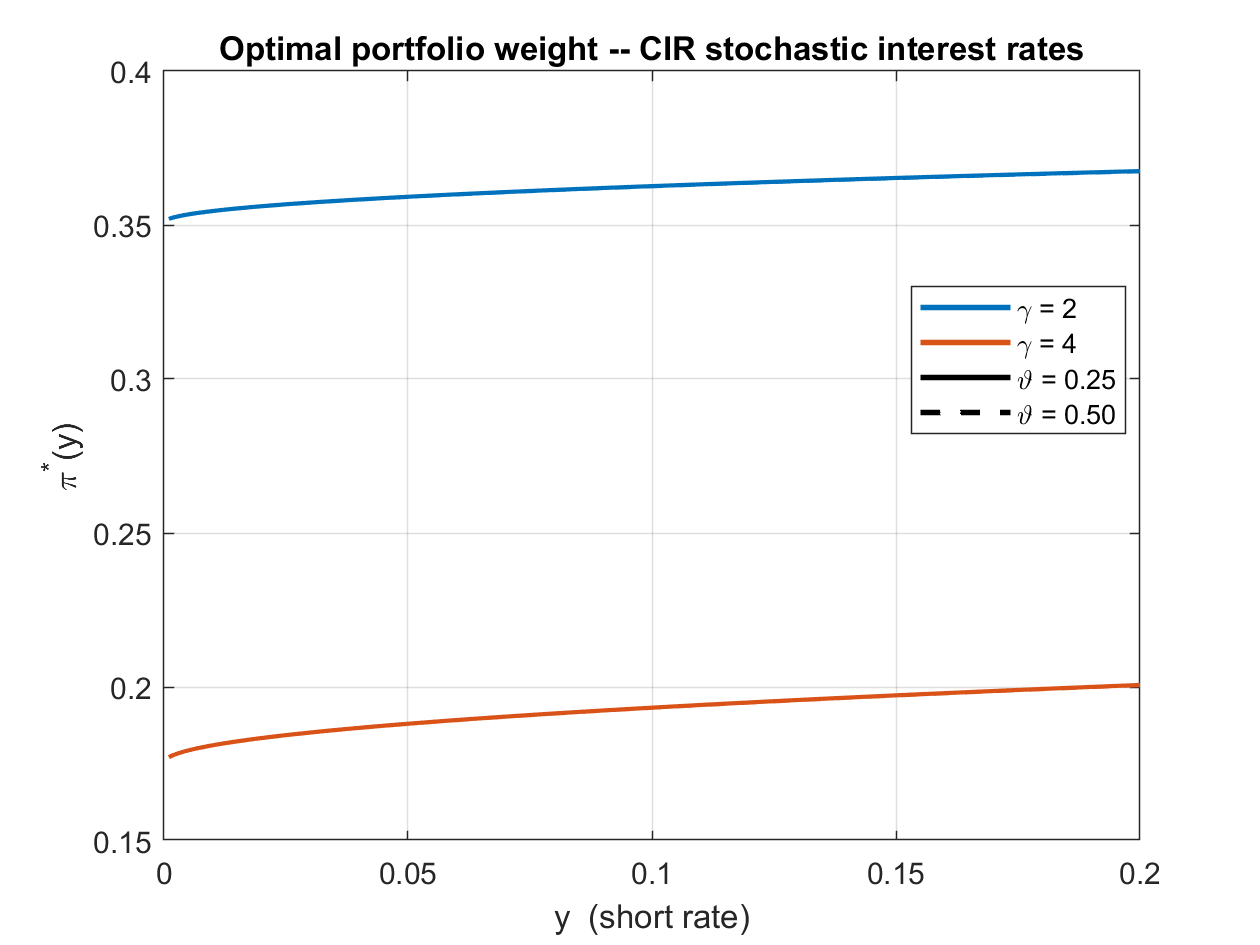} 
			\label{fig:CIR_portfolio}
		\end{subfigure}
		\caption{Optimal consumption-wealth ratio $l^*$ (left) and portfolio weight $\pi^*$ (right) against state variable $y$}
		\label{fig:CIR_consumption_portfolio}
	\end{figure}
	
	Under the CIR specification, the consumption--wealth ratio increases with \(y\), as observed in the left panel. For a fixed value of \(\gamma\), a larger \(\vartheta\) corresponds to a larger
	EIS. Under the present calibration, increasing \(\vartheta\) shifts the
	consumption--wealth ratio upward throughout the plotted region. As in the preceding examples, the effect of intertemporal substitution is considerably more visible in consumption than in portfolio choice.\smallskip
	
	The portfolio rule is
	\[
	\pi^*(y)
	=
	\frac{\mu_0}{\gamma\sigma_0^2}
	+
	\frac{\chi\zeta\rho}{\gamma\sigma_0}
	\sqrt{y}\,
	\frac{h'(y)}{h(y)}.
	\]
	The myopic demand is independent of \(y\). Thus, unlike the mean-reverting
	risk-premium model in \Cref{sec. Mean reverting risk premium}, all state dependence in the right panel is generated by intertemporal hedging. The increasing consumption rule implies that \(h'(y)/h(y)<0\) over the plotted region. Since \(\rho<0\), the hedging demand is positive, and the optimal risky-asset weight lies above its myopic benchmark.\smallskip
	
	The sign of this hedging demand has a natural interpretation. A fall in the
	short rate worsens future risk-free investment opportunities, but the negative
	correlation between stock and short-rate shocks means that the risky asset tends
	to perform well when the short rate falls. Holding the risky asset therefore
	provides insurance against an adverse change in the investment opportunity.
	The mild increase of the portfolio with \(y\) reflects the factor loading
	\(\sqrt{y}\) in the hedging term. Increasing \(\gamma\) from \(2\) to \(4\) substantially reduces the portfolio weight, primarily through the factor \(1/\gamma\). Similar to previous examples, the curves associated with different values of \(\vartheta\) are nearly indistinguishable.

	\paragraph{Acknowledgement.} H. M. Tai is partially supported by Australian Research Council Discovery Project DP240100781.

	\bibliographystyle{abbrv}
	\bibliography{bio}

@book{karatzas2014brownian,
   title     = "Brownian Motion and Stochastic Calculus",
   author    = "Karatzas, Ioannis and Shreve, Steven E",
   publisher = "Springer",
   edition   =  "2nd",
   year      =  "1998",
  }

@article{herdegen2023infinite_I,
   title={{The infinite-horizon investment--consumption problem for Epstein--Zin stochastic differential utility. I: Foundations}},
   author={Herdegen, Martin and Hobson, David and Jerome, Joseph},
   journal={Finance and Stochastics},
   volume={27},
   number={1},
   pages={127--158},
   year={2023},
   publisher={Springer}
  }

@article{herdegen2023infinite_II,
   title = {The infinite-horizon investment--consumption problem for {Epstein--Zin} stochastic differential utility. {II}: Existence, uniqueness and verification for $\vartheta \in (0,1)$},
   author={Herdegen, Martin and Hobson, David and Jerome, Joseph},
   journal={Finance and Stochastics},
   volume={27},
   number={1},
   pages={159--188},
   year={2023},
   publisher={Springer}
  }

@article{xing2017consumption,
   title={{Consumption--investment optimization with Epstein--Zin utility in incomplete markets}},
   author={Xing, Hao},
   journal={Finance and Stochastics},
   volume={21},
   number={1},
   pages={227--262},
   year={2017},
   publisher={Springer}
  }

@article{MatoussiXing,
   author = {Matoussi, Anis and Xing, Hao},
   title = {{Convex duality for Epstein--Zin stochastic differential utility}},
   journal = {Mathematical Finance},
   volume = {28},
   number = {4},
   pages = {991-1019},
   year = {2018}
  }

@article{kraft2017optimal,
   title={{Optimal consumption and investment with Epstein--Zin recursive utility}},
   author={Kraft, Holger and Seiferling, Thomas and Seifried, Frank Thomas},
   journal={Finance and Stochastics},
   volume={21},
   number={1},
   pages={187--226},
   year={2017},
   publisher={Springer}
  }

@article{feng2025consumption,
   title = {{Consumption–investment optimization with Epstein--Zin utility in unbounded non-Markovian markets}},
   author={Feng, Zixin and Tian, Dejian and Zheng, Harry},
   journal = {Stochastic Processes and their Applications},
   volume = {192},
   pages = {104805},
   year = {2026}
  }

@book{revuz2013continuous,
   title={Continuous martingales and Brownian motion},
   author={Revuz, Daniel and Yor, Marc},
   edition   = {3rd},
   year={1999},
   publisher={Springer Science \& Business Media}
  }

@article{guasoni2020consumption,
   title={Consumption in incomplete markets},
   author={Guasoni, Paolo and Wang, Gu},
   journal={Finance and Stochastics},
   volume={24},
   number={2},
   pages={383--422},
   year={2020},
   publisher={Springer}
  }

@article{PaoloScott12,
   author = {Paolo Guasoni and Scott Robertson},
   title = {{Portfolios and risk premia for the long run}},
   volume = {22},
   journal = {The Annals of Applied Probability},
   number = {1},
   publisher = {Institute of Mathematical Statistics},
   pages = {239 -- 284},
   year = {2012}
  }

@article{guasoni2025variational,
   title={A Variational Approach to Portfolio Choice},
   author={Guasoni, Paolo and Lawless, Emmet and Tai, Ho Man},
   journal={Available at SSRN 5669613},
   year={2025}
  }

@article{SATTINGER,
   author = {D. H. Sattinger},
   journal = {Indiana University Mathematics Journal},
   number = {11},
   pages = {979--1000},
   publisher = {Indiana University Mathematics Department},
   title = {Monotone Methods in Nonlinear Elliptic and Parabolic Boundary Value Problems},
   volume = {21},
   year = {1972}
  }

@book{GT77,
   title = {Elliptic Partial Differential Equations of Second Order},
   author = {Gilbarg, D. and Trudinger, N. S.},
   series = {Classics in Mathematics},
   edition = {2nd},
   publisher = {Springer},
   address = {Berlin},
   year = {2001},
   doi = {10.1007/978-3-642-61798-0}
  }

@article{MawhinSchmitt1984,
   title={Upper and lower solutions and semilinear second order elliptic equations with non-linear boundary conditions}, volume={97},
   journal={Proceedings of the Royal Society of Edinburgh Section A: Mathematics},
   author={Mawhin, J. and Schmitt, K.},
   year={1984},
   pages={199–207}
  }

@article{SCHMITT1978263,
   title = {Boundary value problems for quasilinear second order elliptic equations},
   journal = {Nonlinear Analysis: Theory, Methods \& Applications},
   volume = {2},
   number = {3},
   pages = {263-309},
   year = {1978},
   author = {Klaus Schmitt}
  }

@article{herdegen2025proper,
   title={{Proper solutions for Epstein--Zin stochastic differential utility}},
   author={Herdegen, Martin and Hobson, David and Jerome, Joseph},
   journal={Finance and Stochastics},
   volume={29},
   number={3},
   pages={885--932},
   year={2025},
   publisher={Springer}
  }

@book{Protter1990, address={Berlin}, title={Stochastic Integration and Differential Equations: A New Approach},  publisher={Springer Berlin Heidelberg}, author={Protter, Philip}, year={1990} }

@article{KE96,
   title={Dynamic nonmyopic portfolio behavior},
   author={Kim, Tong Suk and Omberg, Edward},
   journal={The Review of Financial Studies},
   volume={9},
   number={1},
   pages={141--161},
   year={1996},
   publisher={Oxford University Press}
  }

@article{W02,
   title={Portfolio and consumption decisions under mean-reverting returns: An exact solution for complete markets},
   author={Wachter, Jessica A},
   journal={Journal of Financial and Quantitative Analysis},
   volume={37},
   number={1},
   pages={63--91},
   year={2002},
   publisher={Cambridge University Press}
  }

@article{barberis2000investing,
   title={Investing for the long run when returns are predictable},
   author={Barberis, Nicholas},
   journal={The Journal of Finance},
   volume={55},
   number={1},
   pages={225--264},
   year={2000},
   publisher={Wiley Online Library}
  }

@article{10.1093/rfs/6.2.327,
   author = {Heston, Steven L.},
   title = {A Closed-Form Solution for Options with Stochastic Volatility with Applications to Bond and Currency Options},
   journal = {The Review of Financial Studies},
   volume = {6},
   number = {2},
   pages = {327-343},
   year = {1993}
  }

@article{PAN20023,
   title = {The jump-risk premia implicit in options: evidence from an integrated time-series study},
   journal = {Journal of Financial Economics},
   volume = {63},
   number = {1},
   pages = {3-50},
   year = {2002},
   author = {Jun Pan}
  }

@book{HigherTranscendental,
   author    = {Bateman, Harry},
   title     = {Higher Transcendental Functions},
   volume    = {II},
   publisher = {McGraw-Hill Book Company},
   address   = {New York},
   year      = {1953}
  }

@article{temme2000numerical,
   title={Numerical and asymptotic aspects of parabolic cylinder functions},
   author={Temme, Nico M},
   journal={Journal of Computational and Applied Mathematics},
   volume={121},
   number={1-2},
   pages={221--246},
   year={2000},
   publisher={Elsevier}
  }

@article{KrepsPorteus1978,
   author = {David M. Kreps and Evan L. Porteus},
   journal = {Econometrica},
   number = {1},
   pages = {185--200},
   title = {Temporal Resolution of Uncertainty and Dynamic Choice Theory},
   urldate = {2026-07-11},
   volume = {46},
   year = {1978}
  }

@article{kraft2013consumption,
   title={Consumption-portfolio optimization with recursive utility in incomplete markets},
   author={Kraft, Holger and Seifried, Frank Thomas and Steffensen, Mogens},
   journal={Finance and Stochastics},
   volume={17},
   number={1},
   pages={161--196},
   year={2013},
   publisher={Springer}
  }

@article{DuffieEpstein1992,
   author = {Duffie, Darrell and Epstein, Larry G.},
   title = {Asset Pricing with Stochastic Differential Utility},
   journal = {The Review of Financial Studies},
   volume = {5},
   number = {3},
   pages = {411-436},
   year = {1992}
  }

@article{DuffieEpstein1992a,
   author = {Darrell Duffie and Larry G. Epstein},
   journal = {Econometrica},
   number = {2},
   pages = {353--394},
   title = {Stochastic Differential Utility},
   urldate = {2026-07-11},
   volume = {60},
   year = {1992}
  }

@article{epstein1989substitution,
   title={Substitution, Risk Aversion, and the Temporal Behavior of Consumption and Asset Returns: A Theoretical Framework},
   author={Epstein, Larry G and Zin, Stanley E},
   journal={Econometrica},
   volume={57},
   number={4},
   pages={937--969},
   year={1989}
  }

@article{bayraktar2026infinite,
   title={Infinite Horizon Optimal Consumption: Intertemporal Hedging under {Epstein--Zin} Preferences},
   author={Bayraktar, Erhan and Lawless, Emmet},
   journal={arXiv preprint arXiv:2606.02945},
   year={2026}
  }

@article{Shigeta2026,
   author  = {Shigeta, Yuki},
   title   = {An economic interpretation and mathematical analysis of
    {Epstein--Zin} stochastic differential utility for an
    infinite horizon when {$\theta<0$}},
   journal = {Finance and Stochastics},
   volume  = {30},
   number = {3},
   pages   = {765--819},
   year    = {2026}
  }

@article{BansalYaron2004,
   author = {Bansal, Ravi and Yaron, Amir},
   title = {Risks for the Long Run: A Potential Resolution of Asset Pricing Puzzles},
   journal = {The Journal of Finance},
   volume = {59},
   number = {4},
   pages = {1481-1509},
   year = {2004}
  }

@article{AurandHuang_random_horizon,
   author = {Aurand, Joshua and Huang, Yu-Jui},
   title = {{Epstein-Zin} utility maximization on a random horizon},
   journal = {Mathematical Finance},
   volume = {33},
   number = {4},
   pages = {1370-1411},
   year = {2023}
  }

@article{gutekunst2025optimal,
   title={Optimal Investment and Consumption in a Stochastic Factor Model},
   author={Gutekunst, Florian and Herdegen, Martin and Hobson, David},
   journal={arXiv preprint arXiv:2509.09452},
   year={2025}
  }

@article{SCHRODER2003155,
   title = {Optimal lifetime consumption-portfolio strategies under trading constraints and generalized recursive preferences},
   author={Schroder, Mark and Skiadas, Costis},
   journal = {Stochastic Processes and their Applications},
   volume = {108},
   number = {2},
   pages = {155-202},
   year = {2003}
  }

@article{SCHRODER20051,
   title = {Lifetime consumption-portfolio choice under trading constraints, recursive preferences, and nontradeable income},
   author={Schroder, Mark and Skiadas, Costis},
   journal = {Stochastic Processes and their Applications},
   volume = {115},
   number = {1},
   pages = {1-30},
   year = {2005}
  }

@article{EZ_transaction,
   author = {Melnyk, Yaroslav and Muhle-Karbe, Johannes and Seifried, Frank Thomas},
   title = {Lifetime investment and consumption with recursive preferences and small transaction costs},
   journal = {Mathematical Finance},
   volume = {30},
   number = {3},
   pages = {1135-1167},
   doi = {https://doi.org/10.1111/mafi.12245},
   url = {https://onlinelibrary.wiley.com/doi/abs/10.1111/mafi.12245},
   eprint = {https://onlinelibrary.wiley.com/doi/pdf/10.1111/mafi.12245},
   year = {2020}
  }

@article{KANG2025108675,
   author  = {Kang, Jianhao and Gou, Zhun and Huang, Nanjing},
   title   = {Consumption and Portfolio Optimization Solvable Problems
    with Recursive Preferences},
   journal = {Communications in Nonlinear Science and Numerical Simulation},
   volume  = {144},
   pages   = {108675},
   year    = {2025},
   issn    = {1007-5704},
   doi     = {10.1016/j.cnsns.2025.108675},
   url     = {https://www.sciencedirect.com/science/article/pii/S1007570425000863}
  }

@article{BenzoniCollinDufresneGoldstein2011,
   title = {Explaining asset pricing puzzles associated with the 1987 market crash},
   journal = {Journal of Financial Economics},
   volume = {101},
   number = {3},
   pages = {552-573},
   year = {2011},
   issn = {0304-405X},
   doi = {https://doi.org/10.1016/j.jfineco.2011.01.008},
   url = {https://www.sciencedirect.com/science/article/pii/S0304405X11000328},
   author = {Luca Benzoni and Pierre Collin-Dufresne and Robert S. Goldstein}
  }

@article{BhamraKuehnStrebulaev2010,
   author = {Bhamra, Harjoat S. and Kuehn, Lars-Alexander and Strebulaev, Ilya A.},
   title = {The Levered Equity Risk Premium and Credit Spreads: A Unified Framework},
   journal = {The Review of Financial Studies},
   volume = {23},
   number = {2},
   pages = {645-703},
   year = {2010}
  }

@article{Wachter2013,
   author={Wachter, Jessica A},
   title = {Can Time-Varying Risk of Rare Disasters Explain Aggregate Stock Market Volatility?},
   journal = {The Journal of Finance},
   volume = {68},
   number = {3},
   pages = {987-1035},
   year = {2013}
  }

@article{AiKiku2013,
   title = {Growth to value: Option exercise and the cross section of equity returns},
   author={Ai, Hengjie and Kiku, Dana},
   journal = {Journal of Financial Economics},
   volume = {107},
   number = {2},
   pages = {325-349},
   year = {2013}
  }

@article{DuffieSkiadas1994,
   title = {Continuous-time security pricing: A utility gradient approach},
   journal = {Journal of Mathematical Economics},
   volume = {23},
   number = {2},
   pages = {107-131},
   year = {1994},
   author = {Darrell Duffie and Costis Skiadas}
  }

@article{ElKarouiPengQuenez2001,
   author = {El Karoui, Nicole and Peng, Shige and Quenez, Marie-Claire},
   title = {{A dynamic maximum principle for the optimization of recursive utilities under constraints}},
   volume = {11},
   journal = {The Annals of Applied Probability},
   number = {3},
   publisher = {Institute of Mathematical Statistics},
   pages = {664 -- 693},
   year = {2001}
  }
\end{document}